\newtheorem{theorem}{\bf Theorem}[section]
\newtheorem{proposition}[theorem]{\bf Proposition}
\newtheorem{definition}[theorem]{\bf Definition}
\newtheorem{corollary}[theorem]{\bf Corollary}
\newtheorem{remark}[theorem]{\bf Remark}
\newtheorem{lemma}[theorem]{\bf Lemma}
\newsavebox{\savepar}
		\date{}
\begin{document}
		\title{Infinitely many solutions for a doubly-nonlocal fractional problem involving two critical nonlinearities}
	\author{ Akasmika Panda \& Debajyoti Choudhuri \\
		\small{Department of Mathematics, National Institute of Technology Rourkela,}\\ \small{Rourkela -769008, India}\\
		\small{Emails: dc.iit12@gmail.com, akasmika44@gmail.com}
	}
	\date{}
	\maketitle
	\begin{abstract}
	\noindent In this article, we study the existence of infinitely many nontrivial solutions for the following problem involving fractional $(p(x),p^+)$-Laplacian.
	\begin{eqnarray}
	\begin{split}
	(-\Delta)_{p(x)}^{s}u+(-\Delta)_{p^+}^{s}u&= |u|^{r(x)-2}u+|u|^{p_s^*(x)-2}u+\lambda f(x,u)~\text{in}~\Omega,\\
	u&= 0~\text{in}~\mathbb{R}^N\setminus\Omega.\nonumber
	\end{split}
	\end{eqnarray}
	Here  $\Omega\subset \mathbb{R}^N$ is a bounded domain, $\lambda>0$, $s\in (0,1),~p(\cdot,\cdot)$ is a continuous, bounded, symmetric function in $\mathbb{R}^N\times\mathbb{R}^N$ such that $ p^+=\underset{(x,y)\in\mathbb{R}^N\times\mathbb{R}^N}{\sup}p(x,y)<\frac{N}{s}$, $r\in C(\overline{\Omega})$ with $p^+<r^-\leq r^+\leq (p^+)^*_s=\frac{Np^+}{N-sp^+}$, $p_s^*(x)=\frac{Np(x,x)}{N-sp(x,x)}$ for every $x\in \mathbb{R}^N$ and the function $f$ satisfies certain assumptions which will be made precise later. Further, the exponents $p_s^*(\cdot)$ and $r(\cdot)$ are two critical exponents with the assumption that the critical sets $\{x\in\Omega:p_s^*(x)=(p^+)_s^*\}$ and $\{x\in\Omega:r(x)=(p^+)_s^*\}$ are nonempty. We also develop a concentration compactness type principle in the process. \\
	{\bf AMS classification}:~35D30, 35J60, 46E35, 35J35\\
		{\bf keywords}:~Concentration compactness principle, fractional Sobolev space with variable exponent, fractional $p(x)$-Laplace operator, critical exponent, genus,
		symmetric mountain pass lemma.
	\end{abstract}
\tableofcontents
\section{Introduction}
The present paper is devoted to the following fractional elliptic PDE involving two critical exponents. 
\begin{eqnarray}\label{app}
\begin{split}
(-\Delta)_{p(x)}^{s}u+(-\Delta)_{p^+}^{s}u&= |u|^{r(x)-2}u+|u|^{p_s^*(x)-2}u+\lambda f(x,u)~\text{in}~\Omega,\\
u&= 0~\text{in}~\mathbb{R}^N\setminus\Omega,
\end{split}
\end{eqnarray}
where $\Omega\subset \mathbb{R}^N$ is a bounded domain, $s\in(0,1)$, $\lambda>0$ and $f:\overline{\Omega}\times\mathbb{R}\rightarrow\mathbb{R}$ satisfies a Carath\'{e}odory condition. The functions $p(\cdot,\cdot)$ and $r(\cdot)$ satisfy the following assumptions.
\begin{enumerate}
	\item[($\mathcal{P}_1$)] $p\in C(\mathbb{R}^N\times\mathbb{R}^N)$ is a symmetric function. 
	\item[($\mathcal{P}_2$)] $1<p^-=\underset{(x,y)\in\mathbb{R}^N\times\mathbb{R}^N}{\inf}p(x,y)\leq p(x,y)\leq p^+=\underset{(x,y)\in\mathbb{R}^N\times\mathbb{R}^N}{\sup}p(x,y)<\frac{N}{s}$ and  $p^+<(p^*_s)^-=\underset{x\in\overline{\Omega}}{\inf}~p_s^*(x)\leq p_s^*(x)=\frac{Np(x,x)}{N-sp(x,x)}$.
	\item[($\mathcal{P}_3$)]  $A_1=\{x\in\Omega:p_s^*(x)=(p^+)_s^*\}\neq\emptyset$ where $(p^+)^*_s=\frac{Np^+}{N-sp^+}$.
	\item[($\mathcal{R}_1$)] $r\in C(\overline{\Omega})$ with $p^+<r^-\leq r^+\leq (p^+)^*_s$.
	\item[($\mathcal{R}_2$)] $A_2=\{x\in\Omega:r(x)=(p^+)_s^*\}\neq \emptyset$.	
\end{enumerate} 
The fractional $p(x)$-Laplacian ($(-\Delta)_{p(x)}^{s}$) and the fractional $p^+$-Laplacian ($(-\Delta)_{p^+}^{s}$) are defined as
$$(-\Delta)_{p(x)}^{s}u=P.V.\int_{\mathbb{R}^N}\frac{|u(x)-u(y)|^{p(x,y)-2}(u(x)-u(y))}{|x-y|^{N+sp(x,y)}}dy$$ and $$(-\Delta)_{p^+}^{s}u=P.V.\int_{\mathbb{R}^N}\frac{|u(x)-u(y)|^{p^+-2}(u(x)-u(y))}{|x-y|^{N+sp^+}}dy$$ respectively. We name $\mathcal{L}=(-\Delta)_{p(x)}^{s}+(-\Delta)_{p^+}^{s}$ as the fractional  $(p(x),p^+)$-Laplacian. \\
The first objective of this paper is to derive a concentration compactness type principle (CCTP) for fractional Sobolev spaces in two critical exponent set up as stated in Theorem $\ref{conc}$ in Section $\ref{continuous}$. The second is to prove the existence of infinitely many small solutions to $\eqref{app}$ (see Theorem $\ref{exist}$) under suitable assumptions on $f$, which are given in Section $\ref{important}$. The strategy of the proof is based on the application of CCTP (Theorem $\ref{conc}$) and the symmetric mountain pass lemma (Lemma $\ref{symmetric}$). Here we choose $\lambda>0$, since for the case $\lambda\leq0$ and $p(\cdot,\cdot)$=constant, the validity of a Poho\v{z}aev identity rules out the existence of a nontrivial solution to $\eqref{app}$ in a star shaped domain. The present work is new in the sense that - to our knowledge - there is no existence result for the problem $\eqref{app}$, even for the local case, i.e. for $s=1$, where the approach can be closely adapted.\\
One of the most important theoretical developments in the theory of elliptic PDEs is due to the work of P. L. Lions (\cite{P.L} in 1984, \cite{Lion} and \cite{Lion1} in 1985). In his work he introduced the notion of concentration compactness principle (CCP) which became a fundamental method to show the existence of solutions of variational problems involving critical Sobolev exponents. A strong reason for the popularity of this principle is because it could address a way to compensate for the lack of compact embeddings amongst certain function spaces, which mostly resulted due to the presence of a critical exponent or due to the consideration of an unbounded domain. It aided to examine the nature of weakly convergent subsequence and determine the energy levels of variational problems below which the Palais-Smale condition is satisfied. Lions \cite{Lion} gave  a systematic theory to handle the issue of loss of compactness not only when it is lost due to translations but also because of the invariance of $\mathbb{R}^N$, for instance, by the non-compact group of dilations.\\
Later, in 1995, Chabrowski \cite{Chabrowski} extended the result of Lions for semilinear elliptic equations with critical and subcritical Sobolev exponent but at infinity. Palatucci \cite{Palatucci} developed a CCP which can be applicable to study a PDE involving a fractional Laplacian and a critical exponent term. Mosconi et al, further generalized the result due to \cite{Palatucci} which can be used to analyse equations involving fractional $p$-Laplacian with a critical growth \cite{Mosconi Perera} and with a nearly critical growth \cite{Mosconi}. At this point, we also refer the reader to the noteworthy work on CCP due to Dipierro et al \cite{Valdi1} (Proposition 3.2.3). A CCP was recently proposed by Bonder et al. \cite{Bonder1}, which can be used to study problems involving the fractional $p$-Laplacian operator for  $1<p<\frac{N}{s}$ in unbounded domain. An advanced version of CCP of P. L. Lions is obtained by Fu \cite{Fu} for variable exponent case dealing with  Dirichlet problems involving $p(x)$-Laplacian with critical exponent $p^*(x)=\frac{Np(x)}{N-p(x)}$. Moreover, Bonder and Silva \cite{Bonder2} developed a more general result for the variable exponent case where the exponent does not require to be critical everywhere. The author worked with the exponent $q(x)$ considering the set $\{x\in\Omega:q(x)=p^*(x)\}$ to be nonempty.\\
	In the recent years, an increased interest among the researchers has been observed to the study of the following type of elliptic equations
		\begin{eqnarray}\label{1}
	\begin{split}
	(-\Delta)_{p(x)}^{s}u&= g(x,u)~\text{in}~\Omega,\\
	u&= 0~\text{in}~\mathbb{R}^N\setminus\Omega,
	\end{split}
	\end{eqnarray} 
	where $(-\Delta)_{p(x)}^{s}$ is the fractional $p(x)$-Laplacian, $\Omega$ is bounded domain in $\mathbb{R}^N$, $p(\cdot,\cdot)$ is a bounded, continuous symmetric real valued function over $\mathbb{R}^N\times\mathbb{R}^N$ and the function $g$ has a subcritical growth. The solution space for the problem in $\eqref{1}$ is the fractional Sobolev space with variable exponent which is defined in Section $\ref{important}$. Readers may refer \cite{Bahrouni1},  \cite{Bahrouni2}, \cite{Ho}, \cite{Kaufmann} and the references therein for further readings on problems of the type as in $\eqref{1}$. Due to the non availability of embeddings from the fractional Sobolev space with variable exponent $p(\cdot,\cdot)$ to the space $L^{p_s^*(\cdot)}(\Omega)$ (where $p_s^*(\cdot)$ is the fractional critical exponent), it is difficult to prove the concentration compactness principle that deals with problem of type $\eqref{1}$ with some critical growth conditions on $g$. Hence, problem involving a fractional $p(x)$-Laplacian and having a critical nonlinearity is still an open problem.\\
	 When $p(\cdot,\cdot)$ is a constant function, i.e. $p(x,y)=q$ for every $(x,y)\in\mathbb{R}^N\times\mathbb{R}^N$ and $r(x)= p_s^*(x)= q_s^*$ for every $x\in \Omega$, the problem $\eqref{app}$ boils down to the following well known fractional problem.	
	\begin{eqnarray}\label{p}
\begin{split}
(-\Delta)_{q}^{s}u&= |u|^{q_s^*-2}u+\tilde{\lambda}g(x,u)~\text{in}~\Omega,\\
u&= 0~\text{in}~\mathbb{R}^N\setminus\Omega,
\end{split}
\end{eqnarray}
where $1<q<\frac{N}{s}$ and $g$ satisfies some subcritical growth conditions. The existence and multiplicity results for the problem $\eqref{p}$ for a range of $\lambda $ has been studied in \cite{Barrios,Cantizano,Fiscella,Mosconi,Perera,Rodrigo} and the bibliography therein. A multiplicity result for a Schr\"{o}dinger-Kirchhoff type problem with the fractional $p$-Laplacian and a critical exponent in $\mathbb{R}^N$ has been studied in \cite{Xiang}.\\
 In the literature, we find critical fractional $(p,q)$- Laplacian problems of the following type. \\ 
	\begin{eqnarray}\label{pq}
	\begin{split}
(-\Delta)_{p}^{s}u+(-\Delta)_{q}^{s}u&= |u|^{p_s^*-2}u+\tilde{\lambda}g(x,u)~\text{in}~\Omega,\\
	u&= 0~\text{in}~\mathbb{R}^N\setminus\Omega,
	\end{split}
	\end{eqnarray}
	with $1\leq q<p<\frac{N}{s}$. Ambrosio \& Isernia in \cite{Ambrosio} considered the problem $\eqref{pq}$ and used concentration compactness principle, symmetric mountain pass theorem to guarantee the existence of infinitely many solutions for a suitable range of $\tilde{\lambda}$.  It is also noteworthy to refer to the problem addressed by the authors in \cite{Bhakta} where they have discussed the existence of multiple nontrivial solutions of $(p,q)$ fractional Laplacian equations involving concave-critical type nonlinearities using CCP. The associated energy functional of the problem $\eqref{pq}$ involving fractional $(p,q)$-Laplcian is a double phase variational integral. One can refer \cite{Bahrouni3,Ragusa}  and the bibliography therein for further study of double phase functionals with variable exponents.
\section{Preliminaries and main result}\label{important}
Let $\Omega$ be a bounded domain in $\mathbb{R}^N$ and denote $$C_+(\overline{\Omega}\times\overline{\Omega})=\{f\in C(\overline{\Omega}\times\overline{\Omega}): ~1<f^-\leq f(x,y)\leq f^+<\infty,~\forall~(x,y)\in \overline{\Omega}\times\overline{\Omega}\}$$ where $f^+=\underset{\overline{\Omega}\times\overline{\Omega}}{\sup}f(x,y)~( \text{or}~\underset{\overline{\Omega}}{\sup}f(x))$, $f^-=\underset{\overline{\Omega}\times\overline{\Omega}}{\inf}f(x,y)~(\text{or}~\underset{\overline{\Omega}}{\inf}f(x))$.\\
Let $p\in C_+(\overline{\Omega})$ and $\nu$ be a complete, $\sigma$-finite measure in $\Omega$. The Lebesgue space with variable exponent is defined as
	$$L_\nu^{p(\cdot)}(\Omega)=\{u:\Omega\rightarrow\mathbb{R}~\text{is}~\nu~\text{measurable}~ :\int_{\Omega}|u|^{p(x)}d\nu<\infty\}$$
which is a Banach space endowed with the norm (see \cite{Fan})
	$$\|u\|_{L_\nu^{p(\cdot)}(\Omega)}=\inf\{\eta\in \mathbb{R}^+:\int_{\Omega}\Big|\frac{u(x)}{\eta}\Big|^{p(x)}d\nu<1\}.$$
	For $d\nu=dx$ we will denote the Lebesgue space with variable exponent as $L^{p(\cdot)}(\Omega)$ whose norm will be denoted by  $\|u\|_{L^{p(\cdot)}(\Omega)}$.\\
	We now give a few more notations and state some propositions which will be referred to henceforth very often.
\begin{proposition}[\cite{Ho}, Proposition 2.1]\label{holder}
	Let $f,g\in C_+(\overline{\Omega})$ with $f(x)\leq g(x)$ for every $x\in \overline{\Omega}$. Then, $$\|u\|_{L_\nu^{f(\cdot)}(\Omega)}\leq 2[1+\nu(\Omega)]\|u\|_{L_\nu^{g(\cdot)}(\Omega)},~\forall u\in L_\nu^{f(\cdot)}(\Omega)\cap L_\nu^{g(\cdot)}(\Omega).$$
\end{proposition}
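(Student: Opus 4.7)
The plan is to work directly with the Luxemburg norm, converting the claimed norm inequality into a modular estimate via the standard equivalence that $\|v\|_{L_\nu^{h(\cdot)}(\Omega)} \leq \beta$ whenever the modular $\rho_h(v/\beta) := \int_\Omega |v/\beta|^{h(x)}\, d\nu$ is at most $1$. The entire argument then reduces to a pointwise splitting of the integrand $|u/\beta|^{f(x)}$ according to whether $|u|$ lies above or below a carefully chosen threshold, using the hypothesis $f\leq g$ exactly on the region where the integrand is large.

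Concretely, I would let $\eta := \|u\|_{L_\nu^{g(\cdot)}(\Omega)}$, so that $\rho_g(u/\eta) \leq 1$ (replacing $\eta$ by $\eta+\varepsilon$ and sending $\varepsilon\to 0$ at the end if needed to accommodate the strict inequality in the paper's definition of the norm). Set $\beta := 2[1+\nu(\Omega)]\eta$; the target is to show $\rho_f(u/\beta) \leq 1$. I would then split $\Omega = A \cup B$ with $A := \{|u|\leq \eta\}$ and $B := \{|u|>\eta\}$, splitting at the threshold $\eta$ rather than at the more natural-looking $\beta$.

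On $A$, since $\eta/\beta \leq 1$ and $f(x) \geq f^- > 1$, the monotonicity of $t\mapsto t^{f(x)}$ gives $|u(x)/\beta|^{f(x)} \leq (\eta/\beta)^{f(x)} \leq \eta/\beta$, whose integral over $A$ is bounded by $\nu(\Omega)\cdot\eta/\beta$. On $B$, the inequality $|u|/\eta > 1$ combined with $f(x)\leq g(x)$ yields $|u/\eta|^{f(x)} \leq |u/\eta|^{g(x)}$, and multiplying by $(\eta/\beta)^{f(x)} \leq \eta/\beta$ and integrating produces a contribution of at most $(\eta/\beta)\,\rho_g(u/\eta) \leq \eta/\beta$. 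Summing the two bounds gives $\rho_f(u/\beta) \leq [1+\nu(\Omega)]\cdot \eta/\beta = 1/2$, and the definition of the Luxemburg norm then delivers $\|u\|_{L_\nu^{f(\cdot)}(\Omega)} \leq \beta$, which is precisely the claimed inequality.

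The only place requiring some care, and the main (if mild) obstacle, is the choice of threshold in the splitting. The natural first guess is to split at $|u|=\beta$, but then the low-amplitude piece contributes the bare quantity $\nu(\Omega)$ with no decay factor and cannot be absorbed into $1$ for large $\nu(\Omega)$. Splitting at $|u|=\eta$ instead endows both pieces with a common multiplicative factor of $\eta/\beta$, and the constant $2[1+\nu(\Omega)]$ in the definition of $\beta$ is exactly what is needed so that the two contributions sum to $1/2$.
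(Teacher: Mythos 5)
Your argument is correct. Note, however, that the paper does not prove this proposition: it is quoted directly from \cite{Ho} (Proposition~2.1) and used purely as a citation, so there is no in-paper proof to measure your attempt against. Your modular-based argument is the standard route to this embedding constant: split $\Omega$ at the threshold $|u|=\eta$, invoke the hypothesis $f\leq g$ only on the set $\{|u|/\eta>1\}$, and extract the common decay factor $\eta/\beta$ from both pieces, so that the prefactor $2[1+\nu(\Omega)]$ in the definition of $\beta$ lands the modular at exactly $1/2<1$. The $\varepsilon$-perturbation of $\eta$ to accommodate the infimum in the Luxemburg norm (the paper's definition uses a strict inequality in the modular) is also handled correctly.
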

	\begin{proposition}[$\cite{Fan}$]\label{holder2}
		1. (H\"{o}lder's Inequality) Let $\alpha,\theta,\gamma:\overline{\Omega}\rightarrow[1,\infty]$ with $\frac{1}{\alpha(x)}=\frac{1}{\theta(x)}+\frac{1}{\gamma(x)}$. If $h\in L^{\gamma(\cdot)}(\Omega)$ and $f\in L^{\theta(\cdot)}(\Omega)$, then 
			$$\|hf\|_{L^{\alpha(\cdot)}(\Omega)}\leq C \|h\|_{L^{\gamma(\cdot)}(\Omega)}\|f\|_{L^{\theta(\cdot)}(\Omega)}.$$
2. If $p,q\in C_+({\overline{\Omega}})$ and $p(x)\leq q(x)$, for $x\in{\overline{\Omega}}$, then $L^{q(\cdot)}(\Omega)\hookrightarrow L^{p(\cdot)}(\Omega)$ and this embedding is continuous. 
	\end{proposition}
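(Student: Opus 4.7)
The strategy for the H\"older inequality in part 1 is the classical Young inequality argument adapted to the variable-exponent setting. Since the hypothesis $\frac{1}{\alpha(x)}=\frac{1}{\theta(x)}+\frac{1}{\gamma(x)}$ rearranges to $\frac{\alpha(x)}{\theta(x)}+\frac{\alpha(x)}{\gamma(x)}=1$, the pointwise Young inequality with conjugate exponents $\theta(x)/\alpha(x)$ and $\gamma(x)/\alpha(x)$ (both $\geq 1$) yields
\[
ab \;\leq\; \frac{\alpha(x)}{\theta(x)}\,a^{\theta(x)/\alpha(x)} + \frac{\alpha(x)}{\gamma(x)}\,b^{\gamma(x)/\alpha(x)}
\]
for every $x$ and every $a,b\geq 0$. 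I would first normalize by setting $\tilde h = h/\|h\|_{L^{\gamma(\cdot)}(\Omega)}$ and $\tilde f = f/\|f\|_{L^{\theta(\cdot)}(\Omega)}$. Substituting $a=|\tilde h(x)|^{\alpha(x)}$, $b=|\tilde f(x)|^{\alpha(x)}$ and integrating produces
\[
\int_{\Omega}|\tilde h\,\tilde f|^{\alpha(x)}\,dx \;\leq\; \int_{\Omega}\frac{\alpha(x)}{\theta(x)}|\tilde h|^{\theta(x)}\,dx + \int_{\Omega}\frac{\alpha(x)}{\gamma(x)}|\tilde f|^{\gamma(x)}\,dx \;\leq\; C,
\]
where the modular integrals on the right are each $\leq 1$ by the unit-ball property of the Luxemburg norm, and the prefactors are dominated by $\alpha^+/\theta^-$ and $\alpha^+/\gamma^-$. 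Converting this modular bound into a norm bound (via $\|u\|\leq \max\{\rho_{\alpha}(u)^{1/\alpha^-},\rho_{\alpha}(u)^{1/\alpha^+}\}$) and then reversing the normalization gives the desired inequality.

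For part 2, the hypothesis $p(x)\leq q(x)$ on $\overline{\Omega}$ together with boundedness of $\Omega$ is precisely what Proposition~\ref{holder} requires for the Lebesgue measure $d\nu=dx$. That proposition then supplies directly
\[
\|u\|_{L^{p(\cdot)}(\Omega)} \;\leq\; 2\bigl[1+|\Omega|\bigr]\,\|u\|_{L^{q(\cdot)}(\Omega)},\qquad u\in L^{q(\cdot)}(\Omega),
\]
which is exactly the continuity of the embedding $L^{q(\cdot)}(\Omega)\hookrightarrow L^{p(\cdot)}(\Omega)$. Alternatively, the same conclusion follows from part 1 by writing $u=1\cdot u$ with the conjugate exponent $\gamma(x)$ defined via $\frac{1}{\gamma(x)}=\frac{1}{p(x)}-\frac{1}{q(x)}\geq 0$, since $\|1\|_{L^{\gamma(\cdot)}(\Omega)}$ is finite whenever $\Omega$ has finite measure.

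The only delicate step --- not a genuine obstacle --- is the bookkeeping between the Luxemburg norm and its modular $\rho_p(u)=\int_{\Omega}|u|^{p(x)}\,dx$. Because the exponent is variable one cannot equate $\|u\|^{p}$ with $\rho_p(u)$; instead one juggles the two-sided estimate $\min\{\|u\|^{p^-},\|u\|^{p^+}\}\leq\rho_p(u)\leq\max\{\|u\|^{p^-},\|u\|^{p^+}\}$ when passing between modular and norm inequalities. This is responsible for the unspecified constant $C$ in the variable-exponent H\"older inequality, in contrast with the sharp constant $1$ of the classical case, but brings in no conceptually new ingredient.
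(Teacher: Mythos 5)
The paper itself offers no proof for this statement: Proposition~\ref{holder2} is simply attributed to the reference [Fan] and used as a black box, so there is no paper argument against which to compare. Your reconstruction follows the standard route (pointwise Young inequality with conjugate exponents $\gamma(x)/\alpha(x)$ and $\theta(x)/\alpha(x)$, then the modular-to-norm passage), and the overall structure is sound. Part 2 by direct appeal to Proposition~\ref{holder} with $d\nu = dx$ is exactly right, and your alternative via part 1 with $\tfrac{1}{\gamma(x)} = \tfrac{1}{p(x)} - \tfrac{1}{q(x)}$ and $\|1\|_{L^{\gamma(\cdot)}(\Omega)}<\infty$ is also correct.

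There is one slip worth fixing in part 1: you normalize $\tilde h = h/\|h\|_{L^{\gamma(\cdot)}(\Omega)}$, so the unit-ball property gives $\int_\Omega |\tilde h|^{\gamma(x)}\,dx \leq 1$, not $\int_\Omega |\tilde h|^{\theta(x)}\,dx \leq 1$; and symmetrically for $\tilde f$ with $\theta$. You have paired $\tilde h$ with the conjugate exponent $\theta(x)/\alpha(x)$ when it should be $\gamma(x)/\alpha(x)$. The corrected inequality, after substituting $a=|\tilde h|^{\alpha(x)}$ and $b=|\tilde f|^{\alpha(x)}$, reads
\begin{equation*}
\int_\Omega |\tilde h \tilde f|^{\alpha(x)}\,dx \;\leq\; \int_\Omega \frac{\alpha(x)}{\gamma(x)}|\tilde h|^{\gamma(x)}\,dx + \int_\Omega \frac{\alpha(x)}{\theta(x)}|\tilde f|^{\theta(x)}\,dx \;\leq\; \frac{\alpha^+}{\gamma^-}+\frac{\alpha^+}{\theta^-},
\end{equation*}
so that the modulars on the right are the ones actually controlled by the normalizations. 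With this repair the rest of your argument, including the bound $\|u\|_{L^{\alpha(\cdot)}}\leq\max\{1, \rho_\alpha(u)^{1/\alpha^-}\}$ needed to convert the modular estimate to a norm estimate, goes through.
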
	
\noindent We fix the exponents $0<s<1$, $p\in C_+(\mathbb{R}^N\times\mathbb{R}^N)$, $\tilde{p}(x)=p(x,x)$ for every $x\in\mathbb{R}^N$ and assume that $p(\cdot,\cdot)$ is a symmetric function. We define the fractional Sobolev space with variable exponent and the corresponding  Gagliardo seminorm as follows (see \cite{Bahrouni1,Bahrouni2,Ho,Kaufmann}). 
	\begin{align}
	W^{s,p(\cdot,\cdot)}(\Omega)=\{u\in L^{\tilde{p}(\cdot)}(\Omega):\int_{\mathbb{R}^N}\int_{\mathbb{R}^N}\frac{|u(x)-u(y)|^{p(x,y)}}{\eta^{p(x,y)}|x-y|^{N+sp(x,y)}}dydx<\infty, \text{ for some }\eta>0\}.\nonumber
	\end{align}
	and
	$$[u]^{s,p(\cdot,\cdot)}=\inf\{\eta\in \mathbb{R}^+:\int_{\mathbb{R}^N}\int_{\mathbb{R}^N}\frac{|u(x)-u(y)|^{p(x,y)}}{\eta^{p(x,y)}|x-y|^{N+sp(x,y)}}dydx<1\}.$$
	The space $W^{s,p(\cdot,\cdot)}(\Omega)$ is a reflexive Banach space equipped with the norm 
	$$\|u\|_{W^{s,p(\cdot,\cdot)}(\Omega)}=\|u\|_{L^{\tilde{p}(\cdot)}(\Omega)}+[u]^{s,p(\cdot,\cdot)}.$$
	We define a subspace of $W^{s,p(\cdot,\cdot)}(\Omega)$, denoted as $W_0$, as follows
	$$W_0=\{u\in W^{s,p(\cdot,\cdot)}(\Omega):u=0 ~\text{a.e. in}~\mathbb{R}^N\setminus\Omega\}.$$ 
The norm on $W_0$ is defined as
	$$\|u\|_{W_0}=\inf\{\eta\in\mathbb{R}^+:\int_{\mathbb{R}^N}\int_{\mathbb{R}^N}\frac{|u(x)-u(y)|^{p(x,y)}}{\eta^{p(x,y)}|x-y|^{N+sp(x,y)}}dydx<1\}.$$
	\begin{lemma}[$\cite{Fan}$]\label{rho}
		Let $u,u_k\in W_0$, $k\in \mathbb{N}$ and define the modular function as
		$$\rho_{W_0}(u)=\int_{\mathbb{R}^N}\int_{\mathbb{R}^N}\frac{|u(x)-u(y)|^{p(x,y)}}{|x-y|^{N+sp(x,y)}}dydx.$$
		Then, we have the following relation between the modular function and the norm.
		\begin{enumerate}
			\item $\|u\|_{W_0}=\eta\iff \rho_{W_0}(\frac{u}{\eta})=1.$
			\item $\|u\|_{W_0}>1(<1,=1)\iff \rho_{W_0}(u)>1(<1,=1).$
			\item $\|u\|_{W_0}>1\implies\|u\|_{W_0}^{p^-}\leq\rho_{W_0}(u)\leq\|u\|_{W_0}^{p^+}.$
			\item $\|u\|_{W_0}<1\implies\|u\|_{W_0}^{p^+}\leq\rho_{W_0}(u)\leq\|u\|_{W_0}^{p^-}.$
			\item $\underset{{k\rightarrow\infty}}{\lim}\|u_k-u\|_{W_0}=0\iff\underset{{k\rightarrow\infty}}{\lim}\rho_{W_0}(u_k-u)=0.$
		\end{enumerate}
	\end{lemma}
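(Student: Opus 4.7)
My plan rests on one analytic fact about the map $\Phi_u(\eta) := \rho_{W_0}(u/\eta)$ for fixed $u \in W_0 \setminus \{0\}$: namely, $\Phi_u : (0,\infty) \to [0,\infty)$ is continuous, strictly decreasing, with $\Phi_u(\eta) \to +\infty$ as $\eta \to 0^+$ and $\Phi_u(\eta) \to 0$ as $\eta \to \infty$. Continuity and the limits follow from dominated convergence applied to the integrand $|u(x)-u(y)|^{p(x,y)}\eta^{-p(x,y)}|x-y|^{-N-sp(x,y)}$, with a dominating function built from the uniform bounds $p^- \le p(x,y) \le p^+$; strict monotonicity follows pointwise from strict monotonicity of $\eta \mapsto \eta^{-p(x,y)}$. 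The trivial case $u \equiv 0$ makes every assertion vacuous or immediate, so I fix $u \neq 0$ throughout.

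Given this, assertion (1) is an intermediate value theorem argument: there is a unique $\eta^{\ast} > 0$ with $\Phi_u(\eta^{\ast}) = 1$, and $\Phi_u(\eta) < 1$ precisely for $\eta > \eta^{\ast}$, so the Luxemburg-type infimum defining $\|u\|_{W_0}$ equals exactly $\eta^{\ast}$. Assertion (2) then follows by specialising $\eta = 1$: $\|u\|_{W_0} > 1$ amounts to $\eta^{\ast} > 1$, which by strict monotonicity is equivalent to $\Phi_u(1) > \Phi_u(\eta^{\ast}) = 1$, that is, $\rho_{W_0}(u) > 1$; the $<1$ and $=1$ cases are identical.

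For (3) and (4), I set $\eta := \|u\|_{W_0}$ so that $\Phi_u(\eta) = 1$ by (1). If $\eta > 1$ then $\eta^{p^-} \le \eta^{p(x,y)} \le \eta^{p^+}$ pointwise, which yields
\[
\eta^{-p^+}\,\rho_{W_0}(u) \;\le\; \rho_{W_0}(u/\eta) = 1 \;\le\; \eta^{-p^-}\,\rho_{W_0}(u),
\]
rearranging to $\eta^{p^-} \le \rho_{W_0}(u) \le \eta^{p^+}$, which is (3). If $\eta < 1$ the chain of pointwise inequalities reverses and gives $\eta^{p^+} \le \rho_{W_0}(u) \le \eta^{p^-}$, which is (4).

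Assertion (5) then falls out by applying (3)--(4) to $u_k - u$: if $\|u_k - u\|_{W_0} \to 0$ then eventually $\|u_k - u\|_{W_0} < 1$ and (4) gives $\rho_{W_0}(u_k - u) \le \|u_k - u\|_{W_0}^{p^-} \to 0$; conversely, if $\rho_{W_0}(u_k - u) \to 0$ then eventually $\rho_{W_0}(u_k - u) < 1$, so (2) gives $\|u_k - u\|_{W_0} < 1$, and (4) then yields $\|u_k - u\|_{W_0}^{p^+} \le \rho_{W_0}(u_k - u) \to 0$. The only genuinely delicate step in the whole chain is establishing the continuity and strict monotonicity of $\Phi_u$; once that is secured, every other assertion is an elementary manipulation of the Luxemburg infimum together with the scaling bounds $\eta^{p^-} \le \eta^{p(x,y)} \le \eta^{p^+}$ (and its reverse for $\eta<1$).
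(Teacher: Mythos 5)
This lemma appears in the paper only as a citation to \cite{Fan}, with no proof given, so there is no internal proof to compare against; your argument is nevertheless the standard one for a Luxemburg norm built from a modular and it is essentially correct. One small fix is needed in the justification of the limits of $\Phi_u$: the claim $\Phi_u(\eta)\to+\infty$ as $\eta\to 0^+$ cannot come from dominated convergence, since no integrable majorant exists uniformly as $\eta\to 0$; it follows instead from monotone convergence (or Fatou), because $\eta\mapsto\eta^{-p(x,y)}$ increases pointwise as $\eta$ decreases and the set where $u(x)\neq u(y)$ has positive measure for $u\neq 0$. Dominated convergence does give local continuity on $(0,\infty)$ (dominate on $[\eta_0/2,2\eta_0]$ by $(\eta_0/2)^{-p(x,y)}$ times the integrand of $\rho_{W_0}(u)$, which is integrable since $W_0$ membership makes $\rho_{W_0}(u/\eta)$ finite for all $\eta>0$ once it is finite for one) and the limit $\Phi_u(\eta)\to 0$ as $\eta\to\infty$. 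With that adjustment, the intermediate value argument that identifies the Luxemburg infimum with the unique root of $\Phi_u=1$ in assertion (1), the specialisation at $\eta=1$ for (2), the pointwise inequalities $\eta^{p^-}\le\eta^{p(x,y)}\le\eta^{p^+}$ (reversed for $\eta<1$) for (3) and (4), and the chaining of (2) and (4) to prove (5) are all correct.
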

\begin{remark}
	The notion of fractional Sobolev space with variable exponent is a generalization of fractional Sobolev space with constant exponent. Let $q\in(1,\infty)$, then we define the fractional Sobolev space with constant exponent as follows. $$W_0^{s,q}(\Omega)=\{u\in L^q(\mathbb{R}^N): \int_{\mathbb{R}^{2N}}\frac{|u(x)-u(y)|^{q}}{|x-y|^{N+sq}}dydx<\infty,~u=0 \text{ in }\mathbb{R}^N\setminus\Omega\}$$ endowed with the norm $$\|u\|_{s,q}^q=\int_{\mathbb{R}^N}\int_{\mathbb{R}^N}\frac{|u(x)-u(y)|^{q}}{|x-y|^{N+sq}}dydx.$$
	\end{remark}
\noindent Given below are a few well known propositions and theorems in the literature.
	\begin{proposition}[$\cite{Ho}$]\label{reflexive}
		The spaces $(W_0,\|.\|_{W_0})$ and $(W_0^{s,q}(\Omega),\|.\|_{s,q})$ are reflexive, uniformly convex Banach spaces.	
	\end{proposition}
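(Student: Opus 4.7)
The plan is to establish uniform convexity first and then deduce reflexivity as an immediate consequence via the Milman--Pettis theorem. The whole argument proceeds by an isometric embedding into an ambient variable-exponent Lebesgue space, which reduces the question to a known property of $L^{p(\cdot)}$-type spaces.

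For the variable-exponent space $W_0$, I would begin by checking that it is a Banach space. Completeness of $W^{s,p(\cdot,\cdot)}(\Omega)$ is standard once the modular-norm relations in Lemma \ref{rho} are in hand, and $W_0$ is a closed subspace of it: if $u_k \to u$ in the ambient norm with $u_k = 0$ a.e.\ on $\mathbb{R}^N\setminus\Omega$, a subsequence converges a.e., forcing $u = 0$ a.e.\ outside $\Omega$. Next, I would equip $\mathbb{R}^N\times\mathbb{R}^N$ with the measure $d\mu(x,y) = \frac{dx\,dy}{|x-y|^{N+sp(x,y)}}$ and consider the linear map $T : W_0 \to L_\mu^{p(\cdot,\cdot)}(\mathbb{R}^N\times\mathbb{R}^N)$ defined by $T(u)(x,y) = u(x) - u(y)$. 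Reading off the definitions, the Luxemburg norm of $Tu$ in $L_\mu^{p(\cdot,\cdot)}$ coincides exactly with $\|u\|_{W_0}$, so $T$ is a linear isometry onto its image.

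The final ingredient is that $L_\mu^{p(\cdot,\cdot)}(\mathbb{R}^N\times\mathbb{R}^N)$ is itself uniformly convex whenever $1 < p^- \le p^+ < \infty$. This is the variable-exponent analogue of Clarkson's inequalities and is available in the Lebesgue space with variable exponent literature cited in the paper, for instance within the framework of \cite{Ho} and \cite{Fan}; the underlying measure $\mu$ plays no special role because the modular $\int |f|^{p(x,y)}\,d\mu$ is handled by pointwise convexity of $t\mapsto t^{p(x,y)}$. Since a closed subspace of a uniformly convex space is uniformly convex, $T(W_0)$ and hence $W_0$ itself is uniformly convex, and Milman--Pettis immediately delivers reflexivity. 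The constant-exponent space $W_0^{s,q}(\Omega)$ is handled by the same scheme with $p(\cdot,\cdot) \equiv q$; in that case the uniform convexity of $L_\mu^q$ reduces to the classical Clarkson inequalities.

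The main potential obstacle is justifying the uniform-convexity statement for $L_\mu^{p(\cdot,\cdot)}$ on the weighted measure $d\mu$, rather than on Lebesgue measure. However, the standard proofs of uniform convexity for variable-exponent Lebesgue spaces are measure-agnostic: they only use the inequality $1 < p^- \le p(x,y) \le p^+ < \infty$ and pointwise convexity of $t\mapsto t^{p(x,y)}$, so the cited results from \cite{Ho} apply verbatim with the measure $\mu$ in place of Lebesgue measure, and no additional computation is needed.
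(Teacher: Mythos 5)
The paper does not prove this proposition; it is quoted directly from \cite{Ho} (see also \cite{Kaufmann} and \cite{Bahrouni2}), so there is no in-text proof for your attempt to be measured against. What you have written is a correct reconstruction of the argument given in those references: one identifies $W_0$ isometrically with a subspace of the weighted variable-exponent Lebesgue space $L_\mu^{p(\cdot,\cdot)}(\mathbb{R}^N\times\mathbb{R}^N)$, $d\mu=|x-y|^{-N-sp(x,y)}\,dx\,dy$, via $u\mapsto u(x)-u(y)$, and then imports uniform convexity and Milman--Pettis from that ambient space. Two small points worth tightening. First, the norm on $W_0$ is the Gagliardo seminorm alone, whereas $W^{s,p(\cdot,\cdot)}(\Omega)$ carries the sum norm $\|\cdot\|_{L^{\tilde p(\cdot)}(\Omega)}+[\cdot]^{s,p(\cdot,\cdot)}$; your closed-subspace argument for completeness therefore tacitly relies on the fact (Theorem \ref{poin}) that the two norms are equivalent on $W_0$, or, equivalently, on completeness of $T(W_0)$ inside $L_\mu^{p(\cdot,\cdot)}$. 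It is cleaner to argue $T(W_0)$ closed directly: if $T u_k \to v$ in $L_\mu^{p(\cdot,\cdot)}$, pass to a subsequence converging a.e.\ and use $u_k\equiv 0$ off $\Omega$ together with $L^{\tilde p(\cdot)}(\Omega)$-convergence from the Poincar\'{e} inequality to identify $v=Tu$ with $u\in W_0$. Second, transferring uniform convexity to a subspace does not require closedness---any linear isometry into a uniformly convex space suffices---so closedness is only needed for the Banach-space assertion, not for uniform convexity. With those caveats, the scheme you outline (and the analogous one with $p\equiv q$ and the classical Clarkson inequalities for $W_0^{s,q}(\Omega)$) is exactly what the cited literature does.
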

\begin{theorem}[\cite{Valdinoci}]\label{constant}
	Let $0<s<1$ and $q\in[1,\infty)$ with $sq<N$. Then, there exists a constant $C>0$ depending on $N,s,q$ such that for any $u\in W_0^{s,q}(\Omega)$ we have
	$$C\|u\|_{L^r(\mathbb{R}^N)}^q\leq \int_{\mathbb{R}^{N}}\int_{\mathbb{R}^{N}}\frac{|u(x)-u(y)|^q}{|x-y|^{N+sq}}dxdy,~\forall~r\in[q,q_s^*]$$
 where $q_s^*=\frac{Nq}{N-sq}$ is the fractional critical Sobolev exponent. Moreover, the space $ W_0^{s,q}(\Omega)$ is continuously embedded in $L^r(\Omega)$ for every $r\in[1,q_s^*]$ and compactly embedded in $L^r(\Omega)$ for every $r\in[1,q_s^*)$.
\end{theorem}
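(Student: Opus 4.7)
The statement is the standard fractional Sobolev embedding for $W_0^{s,q}(\Omega)$, so the plan is to follow the now-classical route laid out in the Di Nezza--Palatucci--Valdinoci guide. The proof splits naturally into three parts: the critical Sobolev--Gagliardo inequality on $\mathbb{R}^N$, the extension to the whole range $r\in[q,q_s^*]$, and the compactness statement for subcritical $r$.

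First I would establish the critical inequality $\|u\|_{L^{q_s^*}(\mathbb{R}^N)}^q \le C \,[u]_{s,q}^q$ for every $u \in W_0^{s,q}(\Omega)$, viewed as a function on $\mathbb{R}^N$ extended by zero. The cleanest approach is to reduce to the case $q=1$ via the layer-cake/coarea formalism: set $v = |u|^{\alpha}$ for $\alpha = \tfrac{N-s}{N-sq}$ (chosen so that $v \in L^{N/(N-s)}$), use the pointwise inequality $\bigl||a|^\alpha-|b|^\alpha\bigr|\le \alpha (|a|+|b|)^{\alpha-1}|a-b|$ together with H\"older in the double integral to bound $[v]_{s,1}$ by $[u]_{s,q}$ times $\|u\|_{L^{q_s^*}}^{(\alpha-1)q/q}$ (absorbing the latter). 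This reduces matters to the fractional Gagliardo--Nirenberg--Sobolev inequality in the $q=1$ case, which can be proved directly by level-set decomposition and a covering argument, as in \cite{Valdinoci}.

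Next, I would derive the embedding for arbitrary $r\in[q,q_s^*]$. Since $\Omega$ is bounded and $u\equiv 0$ outside $\Omega$, for any $r\in[q,q_s^*]$ interpolation gives
\begin{equation*}
\|u\|_{L^r(\mathbb{R}^N)} = \|u\|_{L^r(\Omega)} \le |\Omega|^{\,\theta}\, \|u\|_{L^{q_s^*}(\mathbb{R}^N)}^{\,1-\theta}\, \|u\|_{L^{q}(\mathbb{R}^N)}^{\theta}
\end{equation*}
for an appropriate $\theta\in[0,1]$. Combining this with the critical inequality of Step 1 and, for the $L^q$ factor, a Poincar\'e-type estimate (available because $u$ has compact support in $\Omega$ and $sq<N$, yielding $\|u\|_{L^q(\Omega)}\le C(\Omega)[u]_{s,q}$ through H\"older applied to the critical inequality), I recover the displayed inequality. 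The continuous embedding $W_0^{s,q}(\Omega)\hookrightarrow L^r(\Omega)$ for $r\in[1,q_s^*]$ then follows from this together with $L^{q_s^*}(\Omega)\hookrightarrow L^r(\Omega)$ on the bounded domain.

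The final piece is the compact embedding for $r\in[1,q_s^*)$. Here I would invoke the Fr\'echet--Kolmogorov--Riesz criterion: given a bounded sequence $(u_k)$ in $W_0^{s,q}(\Omega)$, uniform $L^q$-integrability and tightness are immediate since the supports lie in the fixed bounded set $\Omega$, so it suffices to prove uniform $L^q$-equicontinuity of translations, i.e.\ $\|u_k(\cdot+h)-u_k\|_{L^q(\mathbb{R}^N)}\to 0$ as $|h|\to 0$ uniformly in $k$. This is obtained by estimating $\int_{\mathbb{R}^N}|u(x+h)-u(x)|^q\,dx$ via Fubini and a dyadic split of the Gagliardo double integral around the ball of radius $|h|$, yielding a bound of order $|h|^{sq}\,[u_k]_{s,q}^q$. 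Thus $(u_k)$ is relatively compact in $L^q(\Omega)$, and then interpolation with the continuous embedding into $L^{q_s^*}$ upgrades compactness to every $r<q_s^*$.

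The main obstacle is Step 1: deriving the critical Sobolev inequality for the Gagliardo seminorm without invoking heavy machinery (Riesz potentials or the Caffarelli--Silvestre extension). The reduction to $q=1$ via the power substitution $v=|u|^\alpha$ is delicate because one must carefully track how the Gagliardo seminorm transforms under composition with nonlinearities; everything else is essentially soft analysis once that estimate is in hand.
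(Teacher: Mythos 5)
The paper gives no proof of this statement --- it is quoted verbatim from the Di Nezza--Palatucci--Valdinoci survey \cite{Valdinoci}, so there is nothing internal to compare against; I will therefore assess your sketch on its own terms.

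Your Step~1 contains a genuine gap. The power substitution $v=|u|^\alpha$ (and, incidentally, the correct exponent is $\alpha=\tfrac{q(N-s)}{N-sq}$, not $\tfrac{N-s}{N-sq}$; yours is missing a factor $q$) does \emph{not} reduce the fractional Sobolev inequality to the case $q=1$ the way it does for the local $W^{1,p}$ spaces. In the local case one has the pointwise identity $|\nabla v|=\alpha|u|^{\alpha-1}|\nabla u|$, and H\"older is applied to a \emph{single} integral in~$x$, yielding $\|\nabla v\|_{L^1}\le\alpha\|u\|_{L^{q_s^*}}^{\alpha-1}\|\nabla u\|_{L^q}$. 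In the fractional case, after the pointwise bound $\bigl||a|^\alpha-|b|^\alpha\bigr|\le\alpha(|a|+|b|)^{\alpha-1}|a-b|$ you face
\begin{equation*}
[v]_{s,1}\le\alpha\iint\frac{(|u(x)|+|u(y)|)^{\alpha-1}\,|u(x)-u(y)|}{|x-y|^{N+s}}\,dx\,dy,
\end{equation*}
and the only split of the kernel that sends the oscillation factor to $[u]_{s,q}$ is $|x-y|^{-(N+s)}=|x-y|^{-(N+sq)/q}\cdot|x-y|^{-N/q'}$. After H\"older with exponents $q,q'$, the second factor becomes
\begin{equation*}
\left(\iint\frac{(|u(x)|+|u(y)|)^{(\alpha-1)q'}}{|x-y|^{N}}\,dx\,dy\right)^{1/q'},
\end{equation*}
and the kernel $|x-y|^{-N}$ is not integrable, neither near the diagonal nor at infinity --- for every nonzero $u$ the integral diverges logarithmically. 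This is a structural obstruction: because the Gagliardo seminorm is a \emph{double} integral, there is no pointwise Leibniz-type identity to localize the substitution, and the na\"ive H\"older split fails. This is precisely why \cite{Valdinoci} (Theorem~6.5 there) proves the critical inequality for general~$q$ \emph{directly}, by a dyadic covering / level-set argument, rather than reducing to $q=1$.

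Steps~2 and~3 are sound conditional on Step~1: the interpolation with a Poincar\'e estimate on the bounded domain is standard, and the Fr\'echet--Kolmogorov route to compactness via the $L^q$-modulus bound $\|u(\cdot+h)-u\|_{L^q}\le C|h|^s[u]_{s,q}$ works; the cleanest elementary way to prove that bound is the averaging trick (compare $u(x+h)$ and $u(x)$ to the mean of $u$ over $B_{|h|}(x)$ and apply Jensen plus the trivial kernel bound $|z|^{-(N+sq)}\ge|h|^{-(N+sq)}$ on $B_{|h|}(0)$), rather than the dyadic split you gesture at, though both can be made to work. To repair the proof you should replace Step~1 either by the covering argument of \cite{Valdinoci}, or by a Riesz-potential / Besov-space argument, and not by the local-case power substitution.
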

	\begin{theorem}[\cite{Ho}]\label{poin}
		Let us assume $0<s<1$ and $p\in C_+(\mathbb{R}^N\times\mathbb{R}^N)$ such that $sp^+<N$. Then, for any $\beta\in C_+(\overline{\Omega})$ with $\beta(\cdot)<p^*_s(\cdot)$, there exists $C=C(p,s,N,\Omega,\beta)>0$ such that for every $u\in W_0$,
		$$\|u\|_{L^{\beta(\cdot)}(\Omega)}\leq C\|u\|_{W_0}.$$
		Moreover, the embedding from $W_0$ to $L^{\beta(\cdot)}(\Omega)$ is continuous and also compact.
	\end{theorem}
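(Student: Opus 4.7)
I would prove Theorem~\ref{poin} by a localization-and-patching argument that reduces the variable-exponent embedding to finitely many applications of the constant-exponent embedding in Theorem~\ref{constant}. Since $\beta,p_s^*\in C(\overline{\Omega})$ satisfy $\beta(x)<p_s^*(x)$ pointwise, compactness of $\overline{\Omega}$ gives $\epsilon>0$ with $\beta(x)+2\epsilon<p_s^*(x)$ for every $x\in\overline{\Omega}$. Continuity of $\tilde p(x)=p(x,x)$ and of $p(\cdot,\cdot)$ then lets me attach, to each $x_0\in\overline{\Omega}$, a radius $r_0=r_0(x_0)>0$ and a constant $q_0=q_0(x_0)$ strictly below $\tilde p(x_0)$ such that (i) $q_0\le p(x,y)$ on $B_{r_0}(x_0)\times B_{r_0}(x_0)$ and (ii) $q_{0,s}^{*}\ge \beta(x)+\epsilon$ on $B_{r_0}(x_0)\cap\overline{\Omega}$. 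By compactness, a finite subcover $\{B_j\}_{j=1}^{m}$ with associated constants $\{q_j\}$ suffices for what follows.

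The heart of the proof is a local modular comparison: for each $j$ and each $u\in W_0$, the quantity $\int_{B_j\times B_j}|u(x)-u(y)|^{q_j}|x-y|^{-N-sq_j}\,dx\,dy$ must be controlled by $\rho_{W_0}(u)$ plus lower-order data. A natural first attempt is to split the integrand according to jump size: on $\{|u(x)-u(y)|>1\}$ the bound $|u(x)-u(y)|^{q_j}\le |u(x)-u(y)|^{p(x,y)}$ combines with $|x-y|^{-N-sq_j}\le C|x-y|^{-N-sp(x,y)}$ on the bounded set $B_j\times B_j$ (since $q_j\le p(x,y)\le p^+$ and the diameter is finite) to give direct control by $\rho_{W_0}(u)$. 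The small-jump portion is more delicate and requires a further use of the variable-exponent H\"older inequality (Proposition~\ref{holder2}) together with the Dirichlet condition $u\equiv 0$ outside $\Omega$, which, after factoring out $|x-y|^{-N}$, provides the extra fractional gain needed to absorb the near-diagonal singularity. Converting the resulting modular inequality into a norm inequality via Lemma~\ref{rho} yields $\|u\|_{W_0^{s,q_j}(B_j)}\le C_j\|u\|_{W_0}$.

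Once this comparison is in hand, Theorem~\ref{constant} furnishes $W_0^{s,q_j}(B_j)\hookrightarrow L^{q_{j,s}^{*}}(B_j)$, and Proposition~\ref{holder}, applied with $\beta(x)\le q_{j,s}^{*}-\epsilon$ on $B_j$, promotes this to $W_0^{s,q_j}(B_j)\hookrightarrow L^{\beta(\cdot)}(B_j)$. Summing over the finite cover delivers the global continuous embedding $\|u\|_{L^{\beta(\cdot)}(\Omega)}\le C\|u\|_{W_0}$. For compactness, given a bounded sequence $\{u_n\}\subset W_0$, Proposition~\ref{reflexive} yields a weakly convergent subsequence; on each $B_j$ the compact part of Theorem~\ref{constant} delivers a subsequence converging strongly in $L^{r_j}(B_j)$ with $r_j=\sup_{B_j}\beta<q_{j,s}^{*}$, which upgrades to convergence in $L^{\beta(\cdot)}(B_j)$ by Proposition~\ref{holder}. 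A finite diagonal extraction across the cover then produces strong convergence in $L^{\beta(\cdot)}(\Omega)$.

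The principal obstacle is the local modular comparison carried out in the second paragraph: the kernel $|x-y|^{-N-sp(x,y)}$ has a genuinely variable order of singularity, so clean conversion to a constant exponent $q_j$ requires the deliberate slack $q_j<\tilde p(x_0)$ together with the jump-size splitting to absorb both the large- and small-value portions of the Gagliardo integrand. Once this technical bookkeeping is in place, the constant-exponent machinery of Theorem~\ref{constant} and the variable-exponent Lebesgue interpolation in Propositions~\ref{holder} and \ref{holder2} do the rest essentially mechanically.
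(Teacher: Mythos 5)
Theorem~\ref{poin} is quoted from \cite{Ho} and used in the paper as a black box, so there is no ``paper's own proof'' to compare against; what follows therefore assesses your argument on its own terms.

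There is a genuine gap at the central step. Your second paragraph asserts the local Gagliardo-seminorm comparison $\|u\|_{W_0^{s,q_j}(B_j)}\le C_j\|u\|_{W_0}$, with $q_j$ a constant chosen \emph{strictly below} the values of $p(\cdot,\cdot)$ on $B_j\times B_j$. This is precisely the type of embedding the paper rules out in the remark that immediately follows the theorem: for $p>q$ there is no continuous embedding $W_0^{s,p}(\Omega)\hookrightarrow W_0^{s,q}(\Omega)$, with Mironescu--Sickel \cite{Mironescu} cited for counterexamples. The obstruction is visible in your own splitting. On the large-jump set $\{|u(x)-u(y)|>1\}$ pointwise comparison does work, since there both $|u(x)-u(y)|^{q_j}\le|u(x)-u(y)|^{p(x,y)}$ and $|x-y|^{s(p(x,y)-q_j)}$ is bounded on $B_j\times B_j$. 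But on the small-jump, near-diagonal set both inequalities \emph{reverse}: for $|u(x)-u(y)|\le 1$ one has $|u(x)-u(y)|^{q_j}\ge|u(x)-u(y)|^{p(x,y)}$, and for $|x-y|<1$ one has $|x-y|^{-N-sq_j}\ge|x-y|^{-N-sp(x,y)}$. Any attempt to absorb this by H\"older in the pair $(x,y)$, writing
\begin{equation*}
\frac{|u(x)-u(y)|^{q_j}}{|x-y|^{N+sq_j}}
=\left(\frac{|u(x)-u(y)|^{p(x,y)}}{|x-y|^{N+sp(x,y)}}\right)^{q_j/p(x,y)}\cdot\;|x-y|^{-N\,\frac{p(x,y)-q_j}{p(x,y)}},
\end{equation*}
forces the second factor into $L^{p(\cdot,\cdot)/(p(\cdot,\cdot)-q_j)}(B_j\times B_j)$, whose modular is $\iint_{B_j\times B_j}|x-y|^{-N}\,dx\,dy=\infty$. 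So Proposition~\ref{holder2} cannot supply the ``extra fractional gain'' you invoke, and the Dirichlet condition $u\equiv 0$ outside $\Omega$ does not help because it has no effect on the near-diagonal behaviour of the double integral inside $B_j$. With the estimate $\|u\|_{W_0^{s,q_j}(B_j)}\le C_j\|u\|_{W_0}$ removed, the subsequent appeal to Theorem~\ref{constant} on $B_j$ and the diagonal-extraction compactness step both collapse. A correct proof must avoid passing through a strictly smaller constant-exponent Gagliardo seminorm; the arguments in \cite{Ho,Kaufmann} necessarily take a route other than seminorm-lowering on small balls.
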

	\begin{remark}
		If $p>q$, then there is no continuous embedding result from $W_0^{s,p}(\Omega)$ to $W_0^{s,q}(\Omega)$, refer \cite{Mironescu}  for counterexamples. So, the space $W_0^{s,p^+}(\Omega)$ need not be embedded in $W_0$.
	\end{remark}
	\noindent We denote $X=W_0\cap W_0^{s,p^+}(\Omega)$ endowed with the norm $$\|u\|_X=\|u\|_{s,p^+}+\|u\|_{W_0}.$$
	\begin{lemma}\label{cap}
		1. The space $(X,\|.\|_X)$ is a reflexive Banach space.\\
		2. The embedding $X\hookrightarrow L^{r(\cdot)}(\Omega)$ for $r\in C_+(\overline{\Omega})$ is continuous when $r^+\leq (p^+)_s^*$ and is compact whenever $r^+<(p^+)_s^*$.
	\end{lemma}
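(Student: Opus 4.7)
The plan is to deduce both claims from the known properties of the constituent spaces $W_0$ and $W_0^{s,p^+}(\Omega)$, each reflexive by Proposition~\ref{reflexive}, together with the embedding results in Theorems~\ref{constant} and~\ref{poin} and the continuous inclusion of Lebesgue spaces with variable exponent from Proposition~\ref{holder}.

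For part~1, first I note that $\|\cdot\|_X$ is a norm as the sum of two norms on a common vector space of functions. To prove completeness, let $\{u_n\}$ be Cauchy in $X$; then it is Cauchy in both $W_0$ and $W_0^{s,p^+}(\Omega)$, so by Proposition~\ref{reflexive} there exist $u\in W_0$ and $v\in W_0^{s,p^+}(\Omega)$ with $u_n\to u$ in $W_0$ and $u_n\to v$ in $W_0^{s,p^+}(\Omega)$. Theorems~\ref{poin} and~\ref{constant} upgrade each convergence to strong convergence in a Lebesgue space on $\Omega$ (for instance $L^{\tilde p(\cdot)}(\Omega)$ and $L^{p^+}(\Omega)$ respectively), which after passing to a common subsequence forces pointwise a.e.\ agreement and hence $u=v$ a.e. Therefore $u_n\to u$ in $X$. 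Reflexivity then follows from the isometric diagonal embedding $X\hookrightarrow W_0\times W_0^{s,p^+}(\Omega)$, $u\mapsto(u,u)$, whose image is closed by the completeness just proved and which sits inside a reflexive product space.

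For part~2, the key observation is that $\|u\|_{s,p^+}\leq\|u\|_X$, so it is enough to control the target norm by $\|u\|_{s,p^+}$. When $r^+\leq (p^+)_s^*$, Theorem~\ref{constant} with $q=p^+$ provides the continuous embedding $W_0^{s,p^+}(\Omega)\hookrightarrow L^{(p^+)_s^*}(\Omega)$, and Proposition~\ref{holder} applied to the constant exponent $g\equiv(p^+)_s^*$ and $f=r(\cdot)\leq g$ yields the continuous inclusion $L^{(p^+)_s^*}(\Omega)\hookrightarrow L^{r(\cdot)}(\Omega)$; composition gives the continuous embedding $X\hookrightarrow L^{r(\cdot)}(\Omega)$. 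When $r^+<(p^+)_s^*$, I fix a constant $r_0$ with $r^+\leq r_0<(p^+)_s^*$: by Theorem~\ref{constant} the embedding $W_0^{s,p^+}(\Omega)\hookrightarrow L^{r_0}(\Omega)$ is compact, and by Proposition~\ref{holder} the inclusion $L^{r_0}(\Omega)\hookrightarrow L^{r(\cdot)}(\Omega)$ is continuous, so any bounded sequence in $X$, being bounded in $W_0^{s,p^+}(\Omega)$, admits a subsequence that converges strongly in $L^{r(\cdot)}(\Omega)$.

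The main obstacle is the identification $u=v$ in the completeness step: one must argue that the two limits produced in two different norms on the same Cauchy sequence coincide almost everywhere, which I handle by threading both sequences through a common Lebesgue space via the embeddings of Theorems~\ref{poin} and~\ref{constant}. Everything else is a direct composition of previously stated results, so no new analytic input is needed.
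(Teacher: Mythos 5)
Your proof is correct and is essentially the elaboration the paper has in mind: the paper itself omits the proof, saying only that it is a "straightforward application of Proposition \ref{holder2}, Proposition \ref{reflexive} and Theorem \ref{constant}," and your argument uses precisely those ingredients (plus Theorem \ref{poin}/Proposition \ref{holder} in interchangeable roles). In particular, your completeness-plus-closed-diagonal-in-a-reflexive-product argument for part 1, and the factoring of the embedding through $L^{(p^+)_s^*}(\Omega)$ (resp.\ a constant $L^{r_0}(\Omega)$ with $r^+\leq r_0<(p^+)_s^*$) for continuity (resp.\ compactness) in part 2, are the standard and intended route.
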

	\noindent The proof of the above lemma is a straight forward application of Proposition $\ref{holder2}$, Proposition $\ref{reflexive}$ and Theorem $\ref{constant}$.\\
	\noindent We now define the notion of weak solution to the problem $\eqref{app}$.
		\begin{definition}
			A function $u\in X$ is said to be a weak solution of problem $\eqref{app}$, if $u$ is a critical point of the corresponding energy functional
			\begin{align}\label{func}
			\Psi(u)&=\int_{\mathbb{R}^{2N}}\frac{1}{p(x,y)}\frac{|u(x)-u(y)|^{p(x,y)}}{|x-y|^{N+sp(x,y)}}dydx+\int_{\mathbb{R}^{2N}}\frac{1}{p^+}\frac{|u(x)-u(y)|^{p^+}}{|x-y|^{N+sp^+}}dydx\nonumber\\&~~-\int_{\Omega}\frac{1}{r(x)}|u|^{r(x)}dx-\int_{\Omega}\frac{1}{p_s^*(x)}|u|^{p_s^*(x)}dx-\lambda\int_{\Omega}F(x,u),
			\end{align}
			where $F(x,t)=\int_{0}^{t}f(,\tau)d\tau$.
		\end{definition}
\noindent It is easy to show that $\Psi
\in C^1(X)$ and it's differential is given by
\begin{align}
\langle\Psi^\prime(u), v\rangle&=\int_{\mathbb{R}^{2N}}\frac{|u(x)-u(y)|^{p(x,y)-2}(u(x)-u(y))(v(x)-v(y))}{|x-y|^{N+sp(x,y)}}dydx\nonumber\\&~~+\int_{\mathbb{R}^{2N}}\frac{|u(x)-u(y)|^{p^+-2}(u(x)-u(y))(v(x)-v(y))}{|x-y|^{N+sp^+}}dydx\nonumber\\&~~-\int_{\Omega}|u|^{r(x)-2}uvdx-\int_{\Omega}|u|^{p_s^*(x)-2}uvdx-\lambda\int_{\Omega}f(x,u)vdx\nonumber,~\text{for any }~u,v\in X.
\end{align}
Denote $m=\min\{r^-,(p_s^*)^-\}$, refer $(\mathcal{P}_2)$ and $(\mathcal{R}_1)$. Let the function $f$ satisfies the following growth assumptions.
\begin{itemize}
	\item[($\mathcal{F}_1$)] $f:\Omega\times\mathbb{R}\rightarrow\mathbb{R}$ is a Carath\'{e}odory function, $f(x,0)=0~\text{a.e. in}~\Omega$, $f(\cdot,t)>0$ for all $t\in\mathbb{R}^+$ and $f(\cdot,-t)=-f(\cdot,t)$ for all $t\in \mathbb{R}$.
	\item[($\mathcal{F}_2$)] $|f(x,t)|\leq C(1+|t|^{\beta(x)-1}),~\text{a.a}~x\in\Omega$ and for all $ t\in\mathbb{R}$ where $\beta\in C(\overline{\Omega})$ and $p^+<\beta^-\leq \beta^+<m$.
	\item[($\mathcal{F}_3$)] $\lim\limits_{|t|\rightarrow 0^+}\frac{f(x,t)}{|t|^{p^--1}}=\infty$.
	\item[($\mathcal{F}_4$)] there exists $a>0$ and $\alpha\in C(\overline{\Omega})$ with $1<\alpha^-\leq \alpha^+<p^+$ such that
	$$mF(x,t)-f(x,t)t\leq a|t|^{\alpha(x)},~\text{for all}~x\in \Omega~\text{and}~t\in \mathbb{R}.$$
\end{itemize}
\noindent To prove the multiplicity result for $\eqref{app}$, we remind the symmetric mountain pass lemma followed by the definition of genus of a set.
\begin{definition}[Genus, \cite{Rabinowitz}]
Let $Z$ be a Banach space and $Y\subset Z$. We say $Y$ to be a symmetric set if $u\in Y$ implies that $-u\in Y$. For a closed and symmetric set $Y$ such that $0\notin Y$, we define the genus $\gamma(Y)$ of $Y$ by the smallest integer $k$
such that there exists an odd continuous mapping from $Y$ to $\mathbb{R}^k\setminus\{0\}$. If there does not exist such a $k$, we define $\gamma(Y)=\infty$. Further, $\gamma(\emptyset)=0$.	
\end{definition}
\noindent We set
$$\Gamma_k=\{Y\subset Z: Y\text{ is closed and symmetric},~0\notin Y~\text{and}~\gamma(Y)\geq k\}.$$
We have the following properties of genus from \cite{Rabinowitz}.
\begin{proposition}\label{properties}
	Let $Y$ and $\overline{Y}$ are two closed symmetric subsets of $Z$ such that $0\notin Y,\overline{Y}$. Then
	\begin{enumerate}
		\item If $Y\subset \overline{Y}$, then $\gamma(Y)\leq \gamma(\overline{Y})$.
		\item If there is an odd homeomorphism from $Y$ onto $\overline{Y}$, then $\gamma(Y)= \gamma(\overline{Y})$.
		\item If $\gamma(Y)<\infty$, then $\gamma(Y\setminus\overline{Y})\geq \gamma(Y)-\gamma(\overline{Y})$.
		\item $\gamma(S^{N-1})=N$, where $S^{N-1}$ is the sphere in $\mathbb{R}^N$.
		\item If $Y$ is compact then $\gamma(Y)<\infty$ and there exists $\delta>0$ such that $\gamma(Y)=\gamma(N_\delta(Y))$ where $N_\delta(Y)=\{x\in Z:\|x-Y\|\leq \delta\}$ is a closed and symmetric neighbourhood of $Y$.
	\end{enumerate}
	\begin{lemma}[Symmetric mountain pass lemma, \cite{Kajikiya}]\label{symmetric}
		Let $Z$ be an infinite dimensional Banach space and  $J\in C(Z,\mathbb{R})$ be a functional satisfying the conditions below:
		\begin{enumerate}
			\item $I$ is even, bounded from below, $I(0)=0$ and $I$ satisfies the $(PS)$ condition below certain energy level.\item For each $n\in \mathbb{N}$, there exists $Y_n\subset \Gamma_n$ such that $\underset{u\in Y_n}{\sup}~I(u)<0$.
		\end{enumerate}
		Then, either (a) or (b) below holds.
		\begin{enumerate}
		\item[(a)] There exists a sequence $\{u_n\}$ such that $I^\prime(u_n)=0$, $I(u_n)<0$ and $u_n\rightarrow 0$ in $Z$.
		\item[(b)] There exist two sequences $\{u_n\}$ and $\{v_n\}$ such that $I^\prime(u_n)=0$, $I(u_n)=0$, $u_n\neq 0$, $\lim\limits_{n\rightarrow\infty}u_n=0$, $I^\prime(v_n)=0$, $I(v_n)<0$, $\lim\limits_{n\rightarrow\infty}I(v_n)=0$ and $\{v_n\}_{n\in \mathbb{N}}$ converges to a non-zero limit.
		\end{enumerate}
	\end{lemma}
\end{proposition}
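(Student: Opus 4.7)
The plan is to verify each of the five properties in sequence, using the definition of the Krasnoselskii genus combined with (i) restriction/composition of odd maps for (1) and (2), (ii) the odd Tietze extension theorem for (3) and part of (5), (iii) the Borsuk--Ulam theorem for (4), and (iv) a finite antipodally disjoint open cover of $Y$ for (5). Properties (1) and (2) are one-liners: an odd continuous witness $\phi:\overline{Y}\to\mathbb{R}^n\setminus\{0\}$ of $\gamma(\overline{Y})\leq n$ either restricts to $Y$ (proving (1)) or pre-composes with an odd homeomorphism $h:Y\to\overline{Y}$ to give $\phi\circ h$ (proving $\gamma(Y)\leq\gamma(\overline{Y})$ in (2), with the reverse inequality obtained by the same argument applied to $h^{-1}$).

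For (3), set $n=\gamma(\overline{Y})$ and $k=\gamma(Y\setminus\overline{Y})$, both assumed finite (else the inequality is vacuous). Fix odd continuous witnesses $\psi_1:\overline{Y}\to\mathbb{R}^n\setminus\{0\}$ and $\psi_2:Y\setminus\overline{Y}\to\mathbb{R}^k\setminus\{0\}$. Applying Tietze componentwise (passing to the closure of $Y\setminus\overline{Y}$ if needed) and symmetrising via $\psi\mapsto\tfrac12(\psi(x)-\psi(-x))$ yields odd continuous extensions $\widetilde\psi_1,\widetilde\psi_2$ on all of $Z$. The pairing $\Phi:=(\widetilde\psi_1,\widetilde\psi_2):Y\to\mathbb{R}^{n+k}$ is odd, continuous, and nowhere zero: each $x\in Y$ lies either in $\overline{Y}$ (where $\widetilde\psi_1(x)\neq 0$) or in $Y\setminus\overline{Y}$ (where $\widetilde\psi_2(x)\neq 0$). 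Hence $\gamma(Y)\leq n+k$, which is exactly (3).

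For (4), the inclusion $S^{N-1}\hookrightarrow\mathbb{R}^N\setminus\{0\}$ is odd and continuous, giving $\gamma(S^{N-1})\leq N$. Conversely, assume $\gamma(S^{N-1})=k\leq N-1$ with odd witness $\phi:S^{N-1}\to\mathbb{R}^k\setminus\{0\}$. Identifying $\mathbb{R}^k$ with a coordinate subspace of $\mathbb{R}^{N-1}$ yields an odd continuous $\widetilde\phi:S^{N-1}\to\mathbb{R}^{N-1}$, which by the Borsuk--Ulam theorem must vanish somewhere, contradicting $\phi\neq 0$. Hence $\gamma(S^{N-1})=N$. This single appeal to Borsuk--Ulam is the one genuinely topological ingredient in the whole Proposition and is where I expect the main difficulty to lie for a reader unfamiliar with it; all other steps are essentially formal.

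For (5), since $0\notin Y$, with $Y$ symmetric and compact, choose for each $x\in Y$ a radius $r_x>0$ with $B(x,r_x)\cap B(-x,r_x)=\emptyset$ and $0\notin\overline{B(x,r_x)}\cup\overline{B(-x,r_x)}$. Extract a finite subcover $\{B(x_i,r_i)\}_{i=1}^n$ of $Y$ and set
$$\psi_i(x)=\mathrm{dist}(x,Z\setminus B(x_i,r_i))-\mathrm{dist}(x,Z\setminus B(-x_i,r_i)),$$
which is continuous, odd, and nonzero on $B(x_i,r_i)\cup B(-x_i,r_i)$. Then $\Phi=(\psi_1,\ldots,\psi_n):Z\to\mathbb{R}^n$ is odd, continuous, and nonvanishing on $Y$, so $\gamma(Y)\leq n<\infty$. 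For neighbourhood stability, realise $\gamma(Y)$ by an odd continuous $\phi:Y\to\mathbb{R}^{\gamma(Y)}\setminus\{0\}$, extend oddly to $\widetilde\phi:Z\to\mathbb{R}^{\gamma(Y)}$ via Tietze, and use compactness of $Y$ together with openness of $\mathbb{R}^{\gamma(Y)}\setminus\{0\}$ to find $\delta>0$ with $\widetilde\phi(N_\delta(Y))\subset\mathbb{R}^{\gamma(Y)}\setminus\{0\}$, giving $\gamma(N_\delta(Y))\leq\gamma(Y)$; the reverse inequality is immediate from (1).
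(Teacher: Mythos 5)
Your arguments for the five genus properties are correct, but note that the paper does not prove this statement at all: Proposition \ref{properties} and Lemma \ref{symmetric} are quoted results, cited from \cite{Rabinowitz} and \cite{Kajikiya} respectively, so there is no in-paper proof to compare against. What you have written is essentially the classical Krasnoselskii--Rabinowitz argument: restriction/composition for (1)--(2), odd Tietze extension (extend componentwise, then symmetrise via $\psi(x)\mapsto\frac12(\psi(x)-\psi(-x))$) and concatenation of witnesses for (3), Borsuk--Ulam for the lower bound in (4), and a finite cover by antipodally disjoint balls plus the odd distance functions $\psi_i$ for (5). Two small points should be made explicit: in (3) the set $Y\setminus\overline{Y}$ need not be closed, so the statement (and your witness $\psi_2$) should be read on the closure $\overline{Y\setminus\overline{Y}}$, as you indicate parenthetically; and in (5) you must also take $\delta<\mathrm{dist}(0,Y)$ (positive by compactness of $Y$ and $0\notin Y$) so that $0\notin N_\delta(Y)$ and $\gamma(N_\delta(Y))$ is defined, after which your compactness argument for the nonvanishing of the Tietze extension on $N_\delta(Y)$ goes through.

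The one genuine omission relative to the statement as printed is that the proposition environment also contains the symmetric mountain pass lemma of Kajikiya (Lemma \ref{symmetric}), which your proposal does not address. That result is not a formal consequence of properties (1)--(5): its proof requires a genus-based minimax construction together with a deformation lemma under the local Palais--Smale condition, and is substantially deeper than the genus calculus above. Since the paper itself only cites \cite{Kajikiya} for it, this is not a defect in your treatment of the genus properties, but if the task is read as proving the full displayed statement, that part remains unproved.
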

\noindent We take the help of the concentration compactness type principle, derived in Section $\ref{continuous}$, and the symmetric mountain pass lemma (Lemma $\ref{symmetric}$) to prove the existence of weak solutions to $\eqref{app}$. The main result of this article is stated below.
\begin{theorem}\label{exist}
	Let $\Omega$ is a bounded domain in $\mathbb{R}^N$ and $s\in (0,1)$. If the conditions $(\mathcal{P}_1)-(\mathcal{P}_3)$, $(\mathcal{R}_1)-(\mathcal{R}_2)$ and $(\mathcal{F}_1)-(\mathcal{F}_4)$ are fulfilled, then  there exists $\Lambda>0$ such that for any $\lambda\in (0,\Lambda)$, the problem $\eqref{app}$ admits a sequence of nontrivial weak solutions $\{u_n\}\subset X$ such that $u_n\rightarrow 0$ as $n\rightarrow \infty$. 	
\end{theorem}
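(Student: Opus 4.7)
The strategy is to apply Kajikiya's symmetric mountain pass lemma (Lemma \ref{symmetric}) to a suitable truncation $\tilde\Psi$ of the energy functional $\Psi$. Because of the two critical nonlinearities, $\Psi$ is unbounded below, so the first step is to modify $\Psi$ outside a small ball around $0$ in $X$: pick $\rho_0>0$ small and a cut-off $\phi\in C^\infty([0,\infty))$ with $\phi\equiv 1$ on $[0,\rho_0/2]$, $\phi\equiv 0$ on $[\rho_0,\infty)$, and define $\tilde\Psi$ by multiplying the three nonlinear terms of $\Psi$ in \eqref{func} by $\phi(\|u\|_X)$, leaving the two kinetic integrals untouched. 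Using the modular relations of Lemma \ref{rho} and the embeddings of Lemma \ref{cap}, for $\rho_0$ small the functional $\tilde\Psi$ is of class $C^1(X,\mathbb{R})$, even by $(\mathcal F_1)$, vanishes at $0$, coincides with $\Psi$ on $\{\|u\|_X\le\rho_0/2\}$, and satisfies $\tilde\Psi(u)\to+\infty$ as $\|u\|_X\to\infty$; in particular $\tilde\Psi$ is bounded below.

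\textbf{Palais--Smale below a threshold.} Let $(u_n)$ be a $(PS)_c$ sequence for $\tilde\Psi$; using $(\mathcal F_2)$, $(\mathcal F_4)$ and the modular bounds of Lemma \ref{rho} one checks $(u_n)$ is bounded in $X$, so $u_n\rightharpoonup u$ along a subsequence. Apply the concentration compactness type principle of Theorem \ref{conc} to the critical densities $|u_n|^{r(\cdot)}$ and $|u_n|^{p_s^*(\cdot)}$ to describe the possible loss of compactness as a countable family of atoms concentrated at points of $\overline{A_1}\cup\overline{A_2}$, each carrying a mass bounded below by a variable-exponent Sobolev best constant. A standard accounting -- subtracting $\tfrac{1}{m}\langle\tilde\Psi'(u_n),u_n\rangle$ from $\tilde\Psi(u_n)$ and exploiting $(\mathcal F_4)$ together with $\beta^+<m$ and $\alpha^+<p^+$ -- yields a threshold $c^\star(\lambda)>0$ with $c^\star(\lambda)\to+\infty$ as $\lambda\to0^+$ such that no atom can occur whenever $c<c^\star(\lambda)$. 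A standard monotonicity argument for the two fractional operators then upgrades the weak convergence to $u_n\to u$ in $X$.

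\textbf{Negative supremum on finite-dimensional sets.} For each $n\in\mathbb{N}$, fix an $n$-dimensional subspace $E_n\subset C_c^\infty(\Omega)\subset X$, on which all norms are equivalent. Assumption $(\mathcal F_3)$ gives, for every $M>0$, a $\delta>0$ with $F(x,t)\ge\tfrac{M}{p^-}|t|^{p^-}$ whenever $|t|\le\delta$. For $u\in E_n$ with $\|u\|_X=\rho$ sufficiently small, modular estimates combined with equivalence of norms on $E_n$ yield
\[
\tilde\Psi(u)\le C_1(E_n)\rho^{p^-}-\lambda M\, C_2(E_n)\rho^{p^-}+C_3(E_n)\rho^{\beta^-}+C_4(E_n)\rho^{r^-}+C_5(E_n)\rho^{(p_s^*)^-}.
\]
Choosing $M$ large (depending on $\lambda,E_n$) and then $\rho$ small, the right-hand side is $\le-\varepsilon_n<0$. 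The sphere $Y_n:=\{u\in E_n:\|u\|_X=\rho\}$ is odd-homeomorphic to $S^{n-1}$, so Proposition \ref{properties} gives $\gamma(Y_n)=n$, i.e.\ $Y_n\in\Gamma_n$, and $\sup_{Y_n}\tilde\Psi<0$.

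\textbf{Conclusion and main obstacle.} Set $c_n:=\inf_{Y\in\Gamma_n}\sup_{u\in Y}\tilde\Psi(u)$; then $-\infty<c_n\le\sup_{Y_n}\tilde\Psi<0$, the sequence $(c_n)$ is non-decreasing and $c_n\to 0^-$. Choose $\Lambda>0$ so small that, for every $\lambda\in(0,\Lambda)$, the PS threshold $c^\star(\lambda)$ of Step 2 exceeds $0$, so that $(PS)_{c_n}$ holds for every $n$. Lemma \ref{symmetric} then produces a sequence of critical points $\{u_n\}$ of $\tilde\Psi$ with $u_n\to 0$ in $X$ (in either alternative (a) or (b)); for $n$ large, $\|u_n\|_X<\rho_0/2$, so each $u_n$ is a nontrivial weak solution of \eqref{app}. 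The delicate step is the Palais--Smale analysis: the two critical nonlinearities must be handled simultaneously inside a single concentration compactness identity, the non-homogeneity of the $(p(x),p^+)$-Laplacian rules out classical $L^p$ estimates and forces repeated use of the modular inequalities of Lemma \ref{rho}, and it is the explicit control of the $\lambda$-dependence of $c^\star(\lambda)$ that ultimately fixes the choice of $\Lambda$.
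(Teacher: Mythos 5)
Your proposal follows exactly the paper's strategy: truncate the energy functional via a cutoff of the Gagliardo norm so that it coincides with $\Psi$ near the origin and becomes bounded below, verify the Palais--Smale condition below a $\lambda$-dependent threshold using the concentration compactness type principle of Theorem~\ref{conc} together with the $(S_+)$-type monotonicity of the principal operators, use $(\mathcal{F}_3)$ and equivalence of norms on finite-dimensional subspaces to make the truncated functional negative on small spheres (hence $\gamma\geq n$), and conclude by Kajikiya's symmetric mountain pass lemma (Lemma~\ref{symmetric}). The only inaccuracy is your claim that $c^\star(\lambda)\to+\infty$ as $\lambda\to 0^+$: the paper's threshold is $\left(\tfrac{1}{p^+}-\tfrac{1}{m}\right)\min\{\cdots\}-C_1\lambda^{C_2}$ and tends to the finite positive constant $\left(\tfrac{1}{p^+}-\tfrac{1}{m}\right)\min\{\cdots\}$, not to infinity --- but as you only need the threshold to be positive for $\lambda$ small, which it is, the argument stands.
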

\section{Concentration compactness type principle}\label{continuous}
 Following the original method discovered by P. L. Lions \cite{Lion} we derive a concentration compactness type principle which is given in Theorem $\ref{conc}$ below.
\begin{theorem}\label{conc}
	Let $\Omega$ be a bounded domain in $\mathbb{R}^N$,  $s\in (0,1)$ and $q\in(1,\infty)$ with $sq<N$. Let us consider two functions $r_1(\cdot)$, $r_2(\cdot)\in C(\overline{\Omega})$ such that
	\begin{equation}\label{cond1}
	q<r_1^-\leq r_1(x)\leq r_1^+\leq q^*=\frac{Nq}{N-sq}<\infty,
	\end{equation} 
	\begin{equation}\label{cond2}
	q<r_2^-\leq r_2(x)\leq r_2^+\leq q^*<\infty,
	\end{equation}
	and the {\it critical} sets $A_{r_1}=\{x\in\Omega:r_1(x)=q^*\}$, $A_{r_2}=\{x\in\Omega:r_2(x)=q^*\}$ are non empty.\\ 
	Assume $\{u_n\}$ to be a bounded sequence in $W_0^{s,q}(\Omega)$. Then, there exist $u\in W_0^{s,q}(\Omega)$ and bounded regular measures $\mu$, $\nu_1$, $\nu_2$ such that, up to a subsequence,
	\begin{center}
		$u_n\rightarrow u$ weakly in $W_0^{s,q}(\Omega)$ and strongly in $L^{\beta(\cdot)}(\Omega)$ for every $\beta\in C_+(\overline{\Omega})$ with $\beta^+<q^*,$
	\end{center}
	\begin{equation}\label{cc1}
	\int_{\mathbb{R}^N}\frac{|u_n(x)-u_n(y)|^{q}}{|x-y|^{N+sq}}dy\overset{t}{\rightharpoonup}\mu, ~|u_n|^{r_1(\cdot)}\overset{t}{\rightharpoonup}\nu_1,~|u_n|^{r_2(\cdot)}\overset{t}{\rightharpoonup}\nu_2
	\end{equation}
	where $\overset{t}{\rightharpoonup}$ denotes the tight convergence. Define a measure $\nu$ as $\nu=\nu_1+\nu_2$. Then, for some countable set $I$ we have
	\begin{equation}\label{cc2}
	\mu\geq \int_{\mathbb{R}^N}\frac{|u(x)-u(y)|^{q}}{|x-y|^{N+sq}}dy+\sum_{i\in I}\mu_i\delta_{x_i},~~\mu_i=\mu(\{x_i\}),
	\end{equation}
	\begin{equation}\label{cc3}
	\nu=|u|^{r_1(\cdot)}+|u|^{r_2(\cdot)}+\sum_{i\in I}\nu_i\delta_{x_i},~~\nu_i=\nu(\{x_i\}),
	\end{equation}			
	\begin{equation}\label{cc4}
	\nu_i\leq 2\max \left\{\frac{\mu_i^{q^*/q}}{S_{q,r_1,r_2}^{q^*}},\frac{\mu_i^{m/q}}{S_{q,r_1,r_2}^m}\right\}, ~\forall i\in I
	\end{equation}
	where $m=\min\{r_1^-,r_2^-\}$, $\{x_i:i\in I\}\subset A_{r_1}\cup A_{r_2}$,  $\{\nu_i:i\in I\}\subset(0,\infty)$ and $\{\mu_i:i\in I\}\subset (0,\infty)$. The constant $S_{q,r_1,r_2}=S_{q,r_1,r_2}(N, s, q,r_1,r_2,\Omega)>0$ is the Sobolev constant defined as
	\begin{equation}\label{best constant}
	S_{q,r_1,r_2}=\min\{S_{r_1},S_{r_2}\},
	\end{equation}
	where
	\begin{equation}\label{best constsnats}
	S_{r_1}=\underset{u\in W_0^{s,q}(\Omega)\setminus\{0\}}{\inf}\frac{\|u\|_{s,q}}{\|u\|_{L^{r_1(\cdot)}(\Omega)}}~\text{ and }~	S_{r_2}=\underset{u\in W_0^{s,q}(\Omega)\setminus\{0\}}{\inf}\frac{\|u\|_{s,q}}{\|u\|_{L^{r_2(\cdot)}(\Omega)}}.
	\end{equation}
\end{theorem}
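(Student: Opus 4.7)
The plan is to follow the classical P.\,L.\ Lions scheme, adapted to the fractional setting with \emph{two} variable critical exponents. I would proceed in four stages.

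\emph{Stage 1 (extraction of limits).} Reflexivity of $W_0^{s,q}(\Omega)$ (Proposition \ref{reflexive}) delivers a subsequence $u_n\rightharpoonup u$, and Theorem \ref{constant} gives strong convergence in $L^{\beta(\cdot)}(\Omega)$ for every $\beta\in C_+(\overline\Omega)$ with $\beta^+<q^*$. The three nonnegative measures
$$\mu_n(E)=\int_E\!\!\int_{\mathbb{R}^N}\frac{|u_n(x)-u_n(y)|^q}{|x-y|^{N+sq}}\,dy\,dx,\qquad \nu_{j,n}(E)=\int_E|u_n|^{r_j(x)}\,dx\;\; (j=1,2),$$
have uniformly bounded masses by the Sobolev bound and $\|u_n\|_{s,q}\le C$. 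Banach--Alaoglu plus the fact that all three are supported in the bounded set $\overline\Omega$ gives tight weak-$*$ subsequential limits $\mu,\nu_1,\nu_2$.

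\emph{Stage 2 (reduction to the case $u=0$).} Set $v_n:=u_n-u\rightharpoonup 0$ and let $\tilde\mu,\tilde\nu_1,\tilde\nu_2$ be the analogous weak-$*$ limits for $v_n$. A Brezis--Lieb type argument, applied separately in the variable Lebesgue spaces $L^{r_j(\cdot)}(\Omega)$ and in the Gagliardo double integral, should yield
$$\nu_j=|u|^{r_j(\cdot)}\,dx+\tilde\nu_j\;(j=1,2),\qquad \mu\ge \Big(\int_{\mathbb{R}^N}\tfrac{|u(x)-u(y)|^q}{|x-y|^{N+sq}}dy\Big)dx+\tilde\mu,$$
the inequality (not equality) for $\mu$ being exactly analogous to the local Lions case. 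This reduces (\ref{cc2})--(\ref{cc3}) to showing that $\tilde\mu,\tilde\nu_1,\tilde\nu_2$ are purely atomic on a common set contained in $A_{r_1}\cup A_{r_2}$.

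\emph{Stage 3 (the reverse H\"older inequality for the limit measures).} For any $\phi\in C_c^\infty(\mathbb{R}^N)$, apply the Sobolev inequality defining $S_{r_j}$ in (\ref{best constsnats}) to $\phi v_n$:
$$\|\phi v_n\|_{L^{r_j(\cdot)}(\Omega)}\le S_{r_j}^{-1}\,[\phi v_n]_{s,q}.$$
Expanding $|\phi(x)v_n(x)-\phi(y)v_n(y)|^q\le 2^{q-1}\bigl(|\phi(x)|^q|v_n(x)-v_n(y)|^q+|v_n(y)|^q|\phi(x)-\phi(y)|^q\bigr)$, the second piece vanishes as $n\to\infty$ because $v_n\to 0$ strongly in $L^q_{\mathrm{loc}}$ (compact embedding) while $|\phi(x)-\phi(y)|^q/|x-y|^{N+sq}$ is integrable in $y$ locally (Lipschitz $\phi$ and $sq<N$). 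Passing to the limit yields
$$\max\Bigl\{\|\phi\|_{L^{r_1(\cdot)}(\tilde\nu_1)},\,\|\phi\|_{L^{r_2(\cdot)}(\tilde\nu_2)}\Bigr\}\le S_{q,r_1,r_2}^{-1}\,\|\phi\|_{L^q(\tilde\mu)}.$$
A standard consequence (cf.\ the measure-theoretic lemma in Fu \cite{Fu} or Bonder--Silva \cite{Bonder2}) is that both $\tilde\nu_1+\tilde\nu_2$ and $\tilde\mu$ are purely atomic, supported on a common countable set $\{x_i\}_{i\in I}$. Moreover, if $x_i\notin A_{r_1}\cup A_{r_2}$ then $r_j(x_i)<q^*$ in a neighbourhood, and the compact embedding into $L^{r_j(\cdot)}$ near $x_i$ forbids concentration; thus $\{x_i\}\subset A_{r_1}\cup A_{r_2}$.

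\emph{Stage 4 (the atomic estimate (\ref{cc4})).} Fix an atom $x_i$ and test the reverse H\"older inequality with a smooth cutoff $\phi_\varepsilon$ supported in $B_\varepsilon(x_i)$ with $\phi_\varepsilon(x_i)=1$. Letting $\varepsilon\to 0$ and using that $r_j(x_i)=q^*$ for $x_i\in A_{r_j}$, one extracts an inequality of the form $\nu_i^{1/q^*}\le S_{q,r_1,r_2}^{-1}\mu_i^{1/q}$ \emph{whenever the relevant variable-exponent norm reduces to a power}. The subtlety is that the Luxemburg norm $\|\cdot\|_{L^{r_j(\cdot)}(\tilde\nu_j)}$ only equals the $r_j(x_i)$-power of the modular when it is $\le 1$ (Lemma \ref{rho}); when it exceeds $1$, the exponent flips to the infimum $m=\min\{r_1^-,r_2^-\}$. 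Taking the worst of the two cases produces precisely the $\max$ in (\ref{cc4}).

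\emph{Main obstacle.} The technically delicate step is Stage 3, specifically passing to the measure-theoretic limit in the reverse H\"older inequality while the exponent $r_j(\cdot)$ varies over the support of the localizing $\phi$. One must control the variable modular by combining a continuity argument (on $\phi$'s support $r_j$ is nearly constant) with Proposition \ref{holder} to trade subcritical norms. The fractional Brezis--Lieb decomposition in Stage 2 is also nontrivial since the Gagliardo integrand is nonlocal; I expect one only gets the one-sided bound $\mu\ge\dots+\tilde\mu$, which is however sufficient for (\ref{cc2}).
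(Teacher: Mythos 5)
Your proposal follows essentially the same route as the paper: tight limits via Prokhorov, the Sobolev/Minkowski split of $\|\phi u_n\|_{s,q}$, a reverse-H\"older inequality for the combined measure $\nu=\nu_1+\nu_2$ against $\mu$, Lions' measure lemma for atomicity, Br\'ezis--Lieb for the $u\neq 0$ case, concentration of the cutoff $\phi_{\varepsilon,j}$ at $x_j$ for the constant in \eqref{cc4}, and Fatou plus mutual singularity for \eqref{cc2}. One minor slip in Stage 3: Lions' lemma yields atomicity of $\tilde\nu$ but not of $\tilde\mu$, which in general retains a diffuse part from the residual Gagliardo energy, though as you already note this does not obstruct the one-sided bound \eqref{cc2}.
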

\noindent Before proving the theorem we discuss some important results and definitions (refer \cite{Mosconi}).
\begin{definition}
	A bounded sequence $\{u_n\}$ is said to be tight if for every $\epsilon>0$, there exists a compact subset $K$ of $\mathbb{R}^N$ such that 
	$$\underset{n\in \mathbb{N}}{\sup}\int_{K^c}|u_n|dx<\epsilon.$$
\end{definition}
\noindent\textbf{Prokhorov's theorem}: Every bounded sequence $\{u_n\}$ are relatively sequentially compact if and only if the sequence is tight.
\begin{definition}
	A sequence of integrable functions $\{u_n\}$ in $\mathbb{R}^N$ converges tightly to a Borel regular measure $\nu$ if 
	$$\int_{\mathbb{R}^N}\varphi u_ndx\rightarrow\int_{\mathbb{R}^N}\varphi d\nu$$
	for all $\varphi\in C_b(\mathbb{R}^N)$, the space of bounded continuous functions in $\mathbb{R}^N$. We will symbolize the tight convergence by $\overset{t}{\rightharpoonup}$.
\end{definition} 
\begin{proof}[Proof of Theorem $\ref{conc}$]
	Let $\{u_n\}$ be a bounded sequence in $W_0^{s,q}(\Omega)$. Since $u_n\equiv 0$ in $\mathbb{R}^N\setminus\Omega$, the sequences $\{|u_n|^{r_1(\cdot)}\}$ and $\{|u_n|^{r_2(\cdot)}\}$ are tight. By the Prokhorov's theorem, there exist two positive Borel measures $\nu_1$ and $\nu_2$ such that $|u_n|^{r_1(\cdot)}\overset{t}{\rightharpoonup}\nu_1$ and $|u_n|^{r_2(\cdot)}\overset{t}{\rightharpoonup}\nu_2$. Apparently, $supp(\nu_1), supp(\nu_2)\subset\overline{\Omega}$. Denote
$$|D^su_n(x)|^{q}=	\int_{\mathbb{R}^N}\frac{|u_n(x)-u_n(y)|^{q}}{|x-y|^{N+sq}}dy.$$
The tightness of the sequence $\{|D^su_n|^{q}\}$ guarantees the existence of a Borel measure $\mu$ such that $|D^su_n|^{q}\overset{t}{\rightharpoonup}\mu$ and $\eqref{cc1}$ is proved.\\
The functions $r_1(\cdot),r_2(\cdot)\in C_+(\overline{\Omega})$ satisfy $\eqref{cond1}$, $\eqref{cond2}$ and the critical sets $A_{r_1}=\{x\in\Omega:r_1(x)=q^*\}$, $A_{r_2}=\{x\in\Omega:r_2(x)=q^*\}$ are non empty. Then, by using the Proposition $\ref{holder}$ and Theorem $\ref{constant}$, for any $\phi\in C_c^\infty(\mathbb{R}^N)$ with $0\leq|\phi|\leq1$ we have the following Sobolev inequalities.
$$S_{r_1}\|u_n\phi\|_{L^{r_1(\cdot)}(\Omega)}\leq \|u_n\phi\|_{s,q}~\text{ and }~S_{r_2}\|u_n\phi\|_{L^{r_2(\cdot)}(\Omega)}\leq \|u_n\phi\|_{s,q},$$
where $S_{r_1}$, $S_{r_2}$ are defined in $\eqref{best constsnats}$. Let us denote $S_{q,r_1,r_2}=\min\{S_{r_1},S_{r_2}\}$ and $m=\min\{r_1^-,r_2^-\}$. Observe that
\begin{align}\label{observe}
\int_{\Omega}\left(|u_n|^{r_1(x)}+|u_n|^{r_2(x)}\right)|\phi|^{q^*} dx&\leq \int_{\mathbb{R}^N}|u_n\phi|^{r_1(x)}+\int_{\mathbb{R}^N}|u_n\phi|^{r_2(x)}\nonumber\\& \leq \max\left\{\|u_n\phi\|_{L^{r_1(\cdot)}(\Omega)}^{q^*},\|u_n\phi\|_{L^{r_1(\cdot)}(\Omega)}^{m}\right\}\nonumber\\&~~~~+\max\left\{\|u_n\phi\|_{L^{r_2(\cdot)}(\Omega)}^{q^*},\|u_n\phi\|_{L^{r_2(\cdot)}(\Omega)}^{m}\right\}\nonumber\\&\leq 2\max\left\{\frac{\|u_n\phi\|_{s,q}^{q^*}}{S_{q,r_1,r_2}^{q^*}},\frac{\|u_n\phi\|_{s,q}^{m}}{S_{q,r_1,r_2}^{m}}\right\}.
\end{align}
Now using the Minkowski's inequality we get
\begin{align}\label{minkowski}
\|u_n\phi\|_{s,q}&
\leq 
\left(\int_{\mathbb{R}^{2N}}\frac{|u_n(x)|^{q}|\phi(x)-\phi(y)|^{q}}{|x-y|^{N+sq}}dydx\right)^{\frac{1}{q}}\nonumber\\&~~~~+\left(\int_{\mathbb{R}^{2N}}\frac{|\phi(y)|^{q}|u_n
	(x)-u_n(y)|^{q}}{|x-y|^{N+sq}}dydx\right)^{\frac{1}{q}}\nonumber\\&
\leq\left(\int_{\mathbb{R}^{2N}}|u_n(x)|^{q}|D^s\phi(x)|^{q}dx\right)^{\frac{1}{q}}+\left(\int_{\mathbb{R}^{2N}}|\phi(x)|^{q}|D^su_n(x)|^{q}dx\right)^{\frac{1}{q}}. 
\end{align}
Since $\{u_n\}$ is bounded in $W_0^{s,q}(\Omega)$, there exists $u\in W_0^{s,q}(\Omega)$ and a subsequence, still denoted as $\{u_n\}$, such that $u_n$ converges weakly to $u$ in $W_0^{s,q}(\Omega)$ and strongly in $L^{\beta(\cdot)}(\mathbb{R}^N)$ for any $\beta(\cdot)<q^*$. By the Corollary $\ref{Linfinity p^+}$, we observe $\int_{\mathbb{R}^{N}}\frac{|\phi(x)-\phi(y)|^{q}}{|x-y|^{N+sq}}dy\in L^\infty(\mathbb{R}^N)$. Then, letting $n\rightarrow \infty$ and using $\eqref{cc1}$ in $\eqref{minkowski}$, we have
\begin{equation}\label{right}
\lim_{n\rightarrow\infty}\sup	\|u_n\phi\|_{s,q}\leq\left(\int_{\mathbb{R}^{N}}|u(x)|^{q}|D^s\phi(x)|^{q}dx\right)^{\frac{1}{q}}+\left(\int_{\mathbb{R}^{N}}|\phi(x)|^{q}d\mu\right)^{\frac{1}{q}}=M.
\end{equation}
We denote the measure $\nu=\nu_1+\nu_2$ and hence $|u_n|^{r_1(\cdot)}+|u_n|^{r_2(\cdot)}\overset{t}{\rightharpoonup}\nu$. Thus, we have
\begin{equation}\label{left}
\lim_{n\rightarrow \infty}\sup\int_{\mathbb{R}^{N}}\left(|u_n|^{r_1(x)}+|u_n|^{r_2(x)}\right)|\phi|^{q^*}dx=\int_{\mathbb{R}^N}|\phi|^{q^*}d\nu.
\end{equation}
By considering the inequalities $\eqref{right}$, $\eqref{left}$ and by passing the limit $n\rightarrow\infty$ in $\eqref{observe}$, we establish the following.
\begin{align}\label{c5}
\|\phi\|_{L_\nu^{q^*}(\mathbb{R}^N)}^{q^*}\leq2\max\left\{\frac{M^{q^*}}{S_{q,r_1,r_2}^{q^*}},\frac{M^m}{S_{q,r_1,r_2}^m}\right\}.
\end{align}
Suppose that $u=0$ and $u_n\rightarrow 0$ weakly in $W_0^{s,q}(\Omega)$, then $\eqref{c5}$ becomes
\begin{align}\label{holder ineq}
\|\phi\|_{L_\nu^{q^*}(\mathbb{R}^N)}^{q^*}\leq2\max\left\{\frac{\|\phi\|_{L_\mu^{q}(\Omega)}^{q^*}}{S_{q,r_1,r_2}^{q^*}},\frac{\|\phi\|_{L_\mu^q(\Omega)}^m}{S_{q,r_1,r_2}^m}\right\}.
\end{align}
Thus, by following the proof of the Lemma $\ref{lion}$ (stated in the Appendix), we guarantee the existence of a set of distinct points $\{x_i:i\in I\}\subset\mathbb{R}^N$ and $\{\nu_i:i\in I\}\subset(0,\infty)$ such that $$\nu=\sum_{i\in I}\nu_i\delta_{x_i},~\nu_i=\nu(\{x_i\}).$$
Suppose that $u\neq0$. Then, the sequence $\{v_n\}$, where $v_n=u_n-u$, is bounded in $W_0^{s,q}(\Omega)$ and there exists a subsequence of $\{v_n\}$ (named as $\{v_n\}$) which converges weakly to 0 in $W_0^{s,q}(\Omega)$. By the Br\'{e}zis-Lieb lemma [Lemma $\ref{brezis}$ in the Appendix] we have the following
\begin{align}
&\lim_{n\rightarrow\infty}\left(\int_{\mathbb{R}^N}|\phi|\left(|u_n|^{r_1(x)}+|u_n|^{r_2(x)}\right)dx-\int_{\mathbb{R}^N}|\phi|\left(|v_n|^{r_1(x)}+|v_n|^{r_2(x)}\right)dx\right)\nonumber\\&=\int_{\mathbb{R}^N}|\phi|\left(|u|^{r_1(x)}+|u|^{r_2(x)}\right)dx
\end{align}
for every $\phi\in C_c^\infty(\mathbb{R}^N)$. Clearly the sequences $\{|v_n|^{r_1(\cdot)}\}$ and $\{|v_n|^{r_2(\cdot)}\}$ are tight. Hence, on similar lines the representation of $\nu$ is given as
\begin{equation}\label{nu}
\nu=|u|^{r_1(\cdot)}+|u|^{r(\cdot)}+\sum_{i\in I}\nu_i\delta_{x_i}.
\end{equation} 
This proves $\eqref{cc3}$. We now make the following claim.\\
{\it Claim:} The set $\{x_i:i\in I\}$ is a subset of the {\it critical} set $A=A_{r_1}\cup A_{r_2}$.\\
{\it Proof.} We prove this claim by contradiction. Suppose that $x_i$, for a fixed $i\in I$, does not belong to the set $A$. Consider a ball with centre $x_i$ and radius $r>0$ such that $B_i=B_r(x_i)\subset\subset\Omega\setminus A$. Thus, $r_1(x)<q^*$ and $r_2(x)<q^*$ for every $x\in \overline{B_i}$. Further, the sequence $\{u_n\}$ converges strongly to $u$ in $L^{r_1(\cdot)}(B_i)$ and in $L^{r_2(\cdot)}(B_i)$. This implies $|u_n|^{r_1(\cdot)}+|u_n|^{r_2(\cdot)}\rightarrow|u|^{r_1(\cdot)}+|u|^{r_2(\cdot)}$ strongly in $L^1(B_i)$ which is a contradiction to the assumption that $x_i\in B_i$ (see $\eqref{nu}$).\\
Let us consider $\phi\in C_c^\infty(\mathbb{R}^N)$ such that $0\leq\phi\leq1,~\phi(0)=1$ and $supp(\phi)\subset B_1(0)$. For a fixed $j$, choose $\epsilon>0$ such that  for $i\neq j$, $B_\epsilon(x_i)\cap B_\epsilon(x_j)=\emptyset$. We define $\phi_{\epsilon,j}(x)=\phi(\frac{x-x_j}{\epsilon})$. Then, according to Bonder et al. in \cite{Ambrosio1,Bonder2}, we obtain the following.
\begin{equation}\label{claim}
\underset{\epsilon\rightarrow0}{\lim}\int_{\mathbb{R}^{N}}|u(x)|^{q}|D^s\phi_{\epsilon,j}(x)|^{q}dx=0.
\end{equation} 
In order to prove $\eqref{cc4}$, we first observe that 
\begin{align}
\int_{\mathbb{R}^N}|\phi_{\epsilon,j}|^{q^*}d\nu=\int_{\mathbb{R}^N}|\phi_{\epsilon,j}|^{q^*}\left(|u|^{r_1(x)}+|u|^{r_2(x)}\right)dx+\sum_{i\in I}\nu_i|\phi_{\epsilon,j}(x_i)|^{q^*}
 \geq \nu_j.\nonumber
\end{align}
and from $\eqref{claim}$ we obtain $$M=\left(\int_{\mathbb{R}^{N}}|u(x)|^{q}|D^s\phi_{\epsilon,j}(x)|^{q}dx\right)^{\frac{1}{q}}+\left(\int_{\mathbb{R}^{N}}|\phi_{\epsilon,j}(x)|^{q}d\mu\right)^{\frac{1}{q}}
\leq\mu(B_\epsilon(x_j))^{\frac{1}{q}}\rightarrow\mu_{j}^{\frac{1}{q}}~~~\text{as}~\epsilon\rightarrow 0.$$
Thus, on passing the limit $\epsilon\rightarrow0$ in $\eqref{c5}$, we establish $\eqref{cc4}$, i.e. we have
$$\nu_j\leq2\max\left\{\frac{\mu_j^{\frac{q^*}{q}}}{S_{q,r_1,r_2}^{q^*}},\frac{\mu_j^{\frac{m}{q}}}{S_{q,r_1,r_2}^m}\right\}.$$
We are now left to prove $\eqref{cc2}$. We already have $\mu\geq\sum_{i\in I}\mu_i\delta_{x_i}$. On using the weak convergence and the Fatou's lemma we obtain $\mu\geq|D^su(x)|^{q}$. Since, $\sum_{i\in I}\mu_i\delta_{x_i}$ and $|D^su(x)|^{q}$ are orthogonal measures, we conclude $\mu\geq|D^su(x)|^{q}+\sum_{i\in I}\mu_i\delta_{x_i}$ and this completes the proof.
\end{proof}
\begin{remark}
By the same argument, we can prove Theorem $\ref{conc}$ with finite number of critical exponents $r_m(\cdot)$ for $1\leq m\leq M$, instead of two critical exponents $r_1(\cdot)$ and $r_2(\cdot)$.
\end{remark}
 \begin{remark}
 	If the critical sets $A_{r_1}=A_{r_2}=\Omega$, i.e. if $r_1(x)=r_2(x)=q^*$ for every $x\in \Omega$, then the CCTP derived in Theorem $\ref{conc}$ is a consequence of the CCP proved in \cite{Mosconi}. Moreover, our result in Theorem $\ref{conc}$ is slightly more general than the CCP in \cite{Mosconi}, since we do not need the functions $r_1(\cdot)$ and $r_2(\cdot)$ to be critical everywhere. We also establish that the delta functions are located in the critical set $A=A_{r_1}\cup A_{r_2}$. 
 \end{remark}
\section{Proof of main result - Theorem $\ref{exist}$}\label{appl}
 We use the symmetric mountain pass lemma and the concentration compactness type principle to prove Theorem $\ref{exist}$. Hence, we need to first show that the corresponding functional $\Psi$ satisfies the Palais-Smale (P-S) condition below a certain energy level.
\begin{lemma}\label{bound}
	Let $(\mathcal{P}_1)-(\mathcal{P}_3)$, $(\mathcal{R}_1)-(\mathcal{R}_2)$ and $(\mathcal{F}_1)-(\mathcal{F}_4)$ hold. Then. every Palais-Smale (P-S) sequence is bounded in $X$.
\end{lemma}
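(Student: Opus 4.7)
The plan is to apply the standard Ambrosetti--Rabinowitz-type trick tailored to the variable-exponent setting, namely, to combine $\Psi(u_n)$ and $\langle\Psi'(u_n),u_n\rangle$ with the weight $1/m$, where $m=\min\{r^-,(p_s^\ast)^-\}$ is the constant built into $(\mathcal F_4)$. Since $m>p^+$ by $(\mathcal R_1)$ and $(\mathcal P_2)$, the coefficient $C_0:=\frac{1}{p^+}-\frac{1}{m}$ is strictly positive. Let $\{u_n\}\subset X$ be a P-S sequence at some level $c$, so that $|\Psi(u_n)|\le|c|+1$ eventually and $\delta_n:=\|\Psi'(u_n)\|_{X^\ast}\to 0$. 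First I would expand $\Psi(u_n)-\frac{1}{m}\langle\Psi'(u_n),u_n\rangle$ term by term: the Gagliardo piece contributes at least $C_0\rho_{W_0}(u_n)$ because $\frac{1}{p(x,y)}-\frac{1}{m}\ge C_0$; the $W_0^{s,p^+}$-piece contributes exactly $C_0\|u_n\|_{s,p^+}^{p^+}$; the two critical integrals $\int(\frac{1}{m}-\frac{1}{r(x)})|u_n|^{r(x)}dx$ and $\int(\frac{1}{m}-\frac{1}{p_s^\ast(x)})|u_n|^{p_s^\ast(x)}dx$ are nonnegative (since $m\le r(x)$ and $m\le p_s^\ast(x)$) and may be discarded; and the nonlinear term $\lambda\int_\Omega(\frac{1}{m}f(x,u_n)u_n-F(x,u_n))\,dx$ is bounded below by $-\frac{\lambda a}{m}\int_\Omega|u_n|^{\alpha(x)}\,dx$ thanks to $(\mathcal F_4)$. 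Combining,
\[C_0\bigl[\rho_{W_0}(u_n)+\|u_n\|_{s,p^+}^{p^+}\bigr]\le |c|+1+\delta_n\|u_n\|_X+\tfrac{\lambda a}{m}\int_\Omega|u_n|^{\alpha(x)}\,dx.\]

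Next I would control the perturbation on the right purely in terms of $\|u_n\|_{s,p^+}$. Since $\alpha^+<p^+<(p^+)_s^\ast$, Theorem $\ref{constant}$ gives the continuous embedding $W_0^{s,p^+}(\Omega)\hookrightarrow L^{\alpha^+}(\Omega)$ and Proposition $\ref{holder2}$(2) gives $L^{\alpha^+}(\Omega)\hookrightarrow L^{\alpha(\cdot)}(\Omega)$, so the modular--norm inequality in $L^{\alpha(\cdot)}(\Omega)$ (the analogue of Lemma $\ref{rho}$) yields $\int_\Omega|u_n|^{\alpha(x)}\,dx\le C(\|u_n\|_{s,p^+}^{\alpha^+}+\|u_n\|_{s,p^+}^{\alpha^-}+1)$. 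Plugging this back, the leading order on the left in $\|u_n\|_{s,p^+}$ is $p^+$ while the leading order on the right is $\alpha^+<p^+$, so a Young-type absorption confines $\|u_n\|_{s,p^+}$ to a bounded set. Feeding the resulting bound back into the displayed inequality gives $C_0\rho_{W_0}(u_n)\le C(1+\|u_n\|_{W_0})$; if $\|u_n\|_{W_0}>1$, Lemma $\ref{rho}$(3) yields $\rho_{W_0}(u_n)\ge\|u_n\|_{W_0}^{p^-}$, and since $p^->1$, the estimate $\|u_n\|_{W_0}^{p^-}\le C'(1+\|u_n\|_{W_0})$ forces $\|u_n\|_{W_0}$ to be bounded. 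Therefore $\|u_n\|_X=\|u_n\|_{s,p^+}+\|u_n\|_{W_0}$ is bounded, which is the claim.

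The main obstacle is that the norm on $X$ is the sum of two genuinely different norms governed by different exponents, so a single modular estimate cannot control both components simultaneously; moreover, the $W_0$-part only sees the lower exponent $p^-$ in its modular bound, while $(\mathcal F_4)$ provides the perturbation with exponents $\alpha^\pm$ satisfying only $\alpha^+<p^+$ (not the a priori stronger and unavailable $\alpha^+<p^-$). The key structural observation that makes the argument go through is that the entire subcritical perturbation $|u_n|^{\alpha(x)}$ can be routed through the $W_0^{s,p^+}$-embedding, where the gap $p^+-\alpha^+>0$ is exactly what $(\mathcal F_4)$ supplies; the $W_0$-bound then comes almost for free from the residual modular inequality, using only $p^->1$.
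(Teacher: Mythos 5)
Your strategy coincides with the paper's: both work with $\Psi(u_n)-\frac{1}{m}\langle\Psi'(u_n),u_n\rangle$, invoke $(\mathcal F_4)$ for the single term lacking a favorable sign, discard the two nonnegative critical integrals, and convert the Gagliardo and $W_0^{s,p^+}$ energies into norms through the modular inequalities. The papers differ only in how the estimate is closed: the paper divides by $\|u_n\|_X^{p^+}$ and argues by contradiction as $\|u_n\|_X\to\infty$, while you propose a two-stage scheme, bounding $\|u_n\|_{s,p^+}$ first and then feeding that into the $W_0$ modular bound.

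There is, however, a gap precisely at the step you call a "Young-type absorption." After replacing $\int_\Omega|u_n|^{\alpha(x)}dx$ by $C(\|u_n\|_{s,p^+}^{\alpha^+}+\|u_n\|_{s,p^+}^{\alpha^-}+1)$, the right-hand side of your displayed inequality still carries $\delta_n\|u_n\|_X=\delta_n\|u_n\|_{s,p^+}+\delta_n\|u_n\|_{W_0}$; the second summand is independent of $\|u_n\|_{s,p^+}$ and is a priori unbounded, so viewing the inequality as a polynomial in $b_n=\|u_n\|_{s,p^+}$ leaves $n$-dependent coefficients that are not controlled. Dominating $b_n^{\alpha^+}$ and $b_n$ by $b_n^{p^+}$ therefore does not, by itself, confine $b_n$ to a bounded set. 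The repair is short: with $a_n=\|u_n\|_{W_0}$ one has $C_0\rho_{W_0}(u_n)-\delta_n a_n\ge -K$ uniformly in $n$, since for $a_n\le1$ the product $\delta_n a_n$ is bounded, and for $a_n>1$ Lemma \ref{rho}(3) gives $\rho_{W_0}(u_n)\ge a_n^{p^-}\ge a_n$, whence $C_0\rho_{W_0}(u_n)-\delta_n a_n\ge (C_0-\delta_n)a_n\ge 0$ once $\delta_n<C_0$. After this absorption the $W_0$-contribution vanishes from the right-hand side, your polynomial inequality in $b_n$ does close, and the second stage (bounding $a_n$, using only $p^->1$) goes through as you wrote. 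Incidentally, the paper's own closure has a related soft spot — when $a_n\to\infty$ with $b_n$ bounded, the quantity $\min\{a_n^{p^+},a_n^{p^-}\}/\|u_n\|_X^{p^+}$ tends to $0$, so the division by $\|u_n\|_X^{p^+}$ does not immediately produce a contradiction — and your sequential scheme, once the cross term is handled as above, is the cleaner of the two.
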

\begin{proof}
	Let $\{u_n\}\subset X$ be a (P-S) sequence of the functional $\Psi$, defined in $\eqref{func}$, i.e. $\Psi(u_n)\rightarrow c$ and $\Psi^\prime(u_n)\rightarrow 0$ as $n\rightarrow\infty$. Thus, from Lemma $\ref{cap}$ and $(\mathcal{F}_4)$ we have
	\begin{align}\label{bounded p-s}
	c+o(1)&\geq \Psi(u_n)-\frac{1}{m}\langle\Psi^\prime(u_n),u_n\rangle\nonumber\\&= \int_{\mathbb{R}^{2N}}\left(\frac{1}{p(x,y)}-\frac{1}{m}\right)\frac{|u_n(x)-u_n(y)|^{p(x,y)}}{|x-y|^{N+sp(x,y)}}dydx+\left(\frac{1}{p^+}-\frac{1}{m}\right)\int_{\mathbb{R}^{2N}}\frac{|u_n(x)-u_n(y)|^{p^+}}{|x-y|^{N+sp^+}}dydx\nonumber\\&~~~~+\int_{\Omega}\left(\frac{1}{m}-\frac{1}{r(x)}\right)|u_n|^{r(x)}dx+\int_{\Omega}\left(\frac{1}{m}-\frac{1}{p_s^*(x)}\right)|u_n|^{p_s^*(x)}dx\nonumber\\&~~~~-\lambda\int_{\Omega}\left(F(x,u_n)-\frac{1}{m}f(x,u_n)u_n\right)dx\nonumber\\&\geq \left(\frac{1}{p^+}-\frac{1}{m}\right)\left(\int_{\mathbb{R}^{2N}}\frac{|u_n(x)-u_n(y)|^{p(x,y)}}{|x-y|^{N+sp(x,y)}}dydx+\int_{\mathbb{R}^{2N}}\frac{|u_n(x)-u_n(y)|^{p^+}}{|x-y|^{N+sp^+}}dydx\right)\nonumber\\&~~~~-\lambda\frac{a}{m}\int_{\Omega}|u_n|^{\alpha(x)}dx\nonumber\\&\geq \left(\frac{1}{p^+}-\frac{1}{\beta^-}\right)\left(\min\{\|u_n\|_{W_0}^{p^+},\|u_n\|_{W_0}^{p^-}\}+\|u_n\|_{s,p^+}^{p^+}\right)-C\lambda\frac{a}{m}\|u_n\|_{X}^{\alpha^\pm}.
	\end{align}
Let us assume that $\|u_n\|_X\rightarrow\infty$ as $n\rightarrow\infty$. Then, we observe $\frac{1}{\|u_n\|_X^{p^+}}=o(1)$. From $\eqref{bounded p-s}$, we obtain
\begin{equation}\label{contradiction}
o(1)\geq \frac{1}{\|u_n\|_X^{p^+}}\left(\frac{1}{p^+}-\frac{1}{\beta^-}\right)\left(\min\{\|u_n\|_{W_0}^{p^+},\|u_n\|_{W_0}^{p^-}\}+\|u_n\|_{s,p^+}^{p^+}\right)-C\lambda\frac{a}{m}\|u_n\|_{X}^{\alpha^\pm-p^+}.
\end{equation}
Since $\alpha^-\leq \alpha^+<p^+$, we produce a contradiction from $\eqref{contradiction}$ and hence $\{u_n\}$ is a bounded sequence in $X$.
\end{proof}
\noindent Note that for any $p\in C_+(\mathbb{R}^N\times\mathbb{R}^N)$, it is easy to show that $$(p^-)_s^*\leq(p_s^*)^-=\inf_{x\in \mathbb{R}^N} p^*_s(x)\leq p_s^*(x)\leq\sup_{x\in \mathbb{R}^N} p^*_s(x)=(p_s^*)^+\leq(p^+)_s^*.$$
\begin{lemma}\label{palais}
Assume $(\mathcal{P}_1)-(\mathcal{P}_3)$, $(\mathcal{R}_1)-(\mathcal{R}_2)$ and $(\mathcal{F}_1)-(\mathcal{F}_4)$ are satisfied. Then, there exist $C_1,C_2>0$ such that the functional $\Psi$ satisfies the Palais-Smale (P-S) condition for the energy level $$c<\left(\frac{1}{p^+}-\frac{1}{m}\right)\min\left\{2^{\frac{sp^+-N}{sp^+}}S_{p^+,r,p_s^*}^{\frac{N}{s}},2^{\frac{p^+}{p^+-m}}S_{p^+,r,p_s^*}^{\frac{mp^+}{m-p^+}} \right\}-C_1\lambda^{C_2}.$$ Here $S_{p^+,r,p_s^*}$ is the Sobolev constant given in $\eqref{best constant}$ and $m=\min\{r^-,(p_s^*)^-\}$.
\end{lemma}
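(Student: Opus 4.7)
The plan is to combine Theorem \ref{conc} with a concentration analysis at each atom of the limit measure. Let $\{u_n\} \subset X$ be a Palais-Smale sequence at level $c$. By Lemma \ref{bound}, $\{u_n\}$ is bounded in $X$, hence in $W_0^{s,p^+}(\Omega)$. Passing to a subsequence, $u_n \rightharpoonup u$ weakly in $X$, strongly in $L^{\beta(\cdot)}(\Omega)$ for every $\beta \in C_+(\overline{\Omega})$ with $\beta^+ < (p^+)_s^*$, and a.e. in $\Omega$. I then apply Theorem \ref{conc} with $q = p^+$, $r_1 = r(\cdot)$, $r_2 = p_s^*(\cdot)$ to obtain Borel measures $\mu$ and $\nu = \nu_1 + \nu_2$, together with a countable atomic set $\{x_i\}_{i \in I} \subset A_1 \cup A_2$ satisfying \eqref{cc2}--\eqref{cc4}. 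Everything then reduces to showing $I = \emptyset$.

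Fix $j \in I$ and a cutoff $\phi \in C_c^\infty(\mathbb{R}^N)$ with $0 \le \phi \le 1$, $\phi(0)=1$, $\mathrm{supp}\,\phi \subset B_1(0)$, and set $\phi_{\epsilon,j}(x) = \phi((x-x_j)/\epsilon)$. Since $\Psi'(u_n) \to 0$ and $\{u_n\phi_{\epsilon,j}\}$ is bounded in $X$, $\langle \Psi'(u_n), u_n\phi_{\epsilon,j}\rangle = o(1)$ as $n \to \infty$. Using the decomposition $u_n(x)\phi_{\epsilon,j}(x) - u_n(y)\phi_{\epsilon,j}(y) = \phi_{\epsilon,j}(y)(u_n(x)-u_n(y)) + u_n(x)(\phi_{\epsilon,j}(x)-\phi_{\epsilon,j}(y))$ inside each fractional Laplacian term, the ``cross'' pieces (containing $\phi_{\epsilon,j}(x)-\phi_{\epsilon,j}(y)$) vanish when $\epsilon \to 0^+$ by H\"older's inequality and the variable-exponent analogue of \eqref{claim}, while the ``diagonal'' pieces after symmetrisation contribute $\int \phi_{\epsilon,j}\,d\mu$ from the $(-\Delta)_{p^+}^s$ term plus a non-negative contribution from the $(-\Delta)_{p(x)}^s$ term. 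The critical nonlinearities yield $\int \phi_{\epsilon,j}\,d\nu$, and the subcritical $\lambda$-term vanishes as $\epsilon \to 0^+$ by the strong $L^{\beta(\cdot)}$-convergence of $u_n$ together with the shrinking support. Passing first $n \to \infty$ and then $\epsilon \to 0^+$ gives $\mu_j \le \nu_j$; plugging into \eqref{cc4} and using the identity $q^* q /(q^* - q) = N/s$ with $q = p^+$ yields
\[
\mu_j \ge \min\!\left\{ 2^{\frac{sp^+ - N}{sp^+}} S_{p^+,r,p_s^*}^{N/s},\ 2^{\frac{p^+}{p^+ - m}} S_{p^+,r,p_s^*}^{\frac{mp^+}{m - p^+}}\right\}.
\]

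To close the argument, compute $c = \lim_n\bigl(\Psi(u_n) - \tfrac{1}{m}\langle \Psi'(u_n), u_n\rangle\bigr)$. Every coefficient $\tfrac{1}{p(x,y)} - \tfrac{1}{m}$, $\tfrac{1}{p^+} - \tfrac{1}{m}$, $\tfrac{1}{m} - \tfrac{1}{r(x)}$, $\tfrac{1}{m} - \tfrac{1}{p_s^*(x)}$ is non-negative, since $p(x,y) \le p^+ < m \le \min\{r(x), p_s^*(x)\}$. Discarding the positive parts, using tight convergence for the $p^+$-Gagliardo modular, and invoking $(\mathcal{F}_4)$,
\[
c \ge \left(\tfrac{1}{p^+} - \tfrac{1}{m}\right)\mu_j - \lambda\,\tfrac{a}{m}\int_\Omega |u|^{\alpha(x)}\,dx.
\]
Because $\|u\|_X$ is uniformly bounded (Lemma \ref{bound}) and $\alpha^\pm < p^+$, a Young-type split of the form $\lambda t \le \eta t^{p^+/\alpha} + C_\eta \lambda^{p^+/(p^+ - \alpha)}$ absorbs the subcritical integral into a remainder $C_1\lambda^{C_2}$. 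The resulting inequality directly contradicts the assumed upper bound on $c$, so $I = \emptyset$. Thence $\int_\Omega |u_n|^{r(x)} \to \int_\Omega |u|^{r(x)}$ and $\int_\Omega |u_n|^{p_s^*(x)} \to \int_\Omega |u|^{p_s^*(x)}$; feeding $\langle \Psi'(u_n) - \Psi'(u), u_n - u\rangle = o(1)$ into the Simon-type inequalities for the fractional $p(x)$- and $p^+$-Laplacians (separately in the ranges $p \ge 2$ and $1 < p < 2$) forces $\rho_{W_0}(u_n - u) \to 0$ and $\|u_n - u\|_{s,p^+} \to 0$, so $u_n \to u$ strongly in $X$ by Lemma \ref{rho}. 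I expect the main obstacle to be the variable-exponent bookkeeping at the atoms that couples two critical nonlinearities with two nonlocal operators, together with tuning the Young-type split so that the correction appears in the precise form $C_1\lambda^{C_2}$ stated in the lemma.
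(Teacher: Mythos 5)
Your overall route matches the paper's: apply the CCTP with $q=p^+$, $r_1=r$, $r_2=p_s^*$, localise with $\phi_{\epsilon,j}$ in $\langle\Psi'(u_n),\phi_{\epsilon,j}u_n\rangle$, conclude $\mu_j\le\nu_j$, combine with \eqref{cc4} to get the lower bound on the atom, and run the energy estimate $c=\lim(\Psi(u_n)-\tfrac{1}{p^+}\langle\Psi'(u_n),u_n\rangle)$ (the paper subtracts with $\tfrac1{p^+}$ and keeps the critical nonlinearities to reach $\nu_j$; you subtract with $\tfrac1m$ and keep the $p^+$--Gagliardo term to reach $\mu_j$ --- both deliver the same $\min$ by $\mu_j\le\nu_j$), then close with a Simon/$(S_+)$ argument. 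You also elide the need to introduce the auxiliary measure $\sigma$ for the $p(x,y)$-modular (tightness of $|U_n|$), but your ``non-negative contribution'' phrasing is acceptable.

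There is, however, a genuine gap in producing the remainder $C_1\lambda^{C_2}$ with $C_1,C_2$ independent of the Palais--Smale level $c$ and the weak limit $u$, which is precisely what the lemma asserts. You write ``discarding the positive parts'' and arrive at
\[
c \geq \left(\tfrac1{p^+}-\tfrac1m\right)\mu_j - \lambda\,\tfrac{a}{m}\int_\Omega |u|^{\alpha(x)}\,dx,
\]
and then invoke a Young-type split $\lambda t\le \eta t^{p^+/\alpha}+C_\eta\lambda^{p^+/(p^+-\alpha)}$. But once you have discarded all positive powers of $u$, there is nothing left on the right to absorb the $\eta t^{p^+/\alpha}$ piece; a Young split without a matching coercive term yields a constant depending on $u$, hence on $c$ and $\lambda$, which is not allowed. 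Your fallback --- that $\|u\|_X$ is bounded by Lemma \ref{bound} --- only gives a bound depending on $c$ and $\lambda$, so it is circular. The fix (and what the paper actually does with $\|u\|_{L^{r(\cdot)}}^{\bar r}$) is to \emph{retain} a positive term: e.g.\ keep $\left(\tfrac1{p^+}-\tfrac1m\right)\|u\|_{s,p^+}^{p^+}$ from the lower-bound $\mu\geq |D^s u|^{p^+}+\sum_i\mu_i\delta_{x_i}$, estimate $\int_\Omega|u|^{\alpha(x)}dx\lesssim\|u\|_{s,p^+}^{\bar\alpha}$ by the Sobolev embedding, and minimise the function $t\mapsto \left(\tfrac1{p^+}-\tfrac1m\right)t^{p^+}-C_0\lambda t^{\bar\alpha}$ over $t>0$; this produces a universal $-C_1\lambda^{C_2}$ with $C_2=p^+/(p^+-\bar\alpha)$. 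Without retaining such a coercive term the contradiction with $c$ below the stated threshold does not follow.
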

\begin{proof}
	Let $\{u_n\}$ be a (P-S) sequence in $X$. Then, by Lemma $\ref{bound}$, $\{u_n\}$ is bounded in $X$ and thus there exists $u\in X$ such that, up to a subsequence (still denoted as $\{u_n\}$), $\{u_n\}$ converges weakly to $u$ in $X$. We need to show that $u_n\rightarrow u$ strongly in $X$. Since $\{u_n\}$ is bounded in $X$, $\{u_n\}$ is also bounded in $W_0$ and in $W_0^{s,p^+}(\Omega)$. Thus, using the concentration compactness type principle, stated in Theorem $\ref{conc}$, we have 
	\begin{equation}
	\int_{\mathbb{R}^N}\frac{|u_n(x)-u_n(y)|^{p^+}}{|x-y|^{N+sp^+}}dy\overset{t}{\rightharpoonup}\mu\geq\int_{\mathbb{R}^N}\frac{|u(x)-u(y)|^{p^+}}{|x-y|^{N+sp^+}}dy+\sum_{i\in I}\mu_{j}\delta_{x_i}, \nonumber
	\end{equation}
	\begin{equation}
	|u_n|^{r(\cdot)}+|u_n|^{p_s^*(\cdot)}\overset{t}{\rightharpoonup}\nu=|u|^{(r(\cdot)}+|u|^{p_s^*(\cdot)}+\sum_{i\in I}\nu_i\delta_{x_i}\nonumber
	\end{equation}	and		
	\begin{equation}
\nu_j\leq2\max\left\{\frac{\mu_j^{\frac{(p^+)_s^*}{p^+}}}{S_{p^+,r,p_s^*}^{(p^+)_s^*}},\frac{\mu_j^{\frac{m}{p^+}}}{S_{p^+,r,p_s^*}^{m}}\right\}, ~\forall j\in I,\nonumber
	\end{equation}
	where $m=\min\{r^-,(p_s^*)^-\}$, $I$ is countable, $\{x_j\}_{j\in I}\subset A=A_{1}\cup A_{2}$, $\{\nu_j\}_{j\in I}, \{\mu_j\}_{j\in I}\subset(0,\infty)$ and the constant $S_{p^+,r,p_s^*}$ is defined in $\eqref{best constant}$. 
	Let us denote
	$$|U_n(x)|=	\int_{\mathbb{R}^N}\frac{|u_n(x)-u_n(y)|^{p(x,y)}}{|x-y|^{N+sp(x,y)}}dy$$
	and consider an open, bounded subset $D$ of $\mathbb{R}^N$, defined as in the proof of Theorem $\ref{conc}$. Then, for any $x\in D^c$ and $y\in \Omega$, there exists a constant $C_d$ such that $|x-y|\geq C_d|x|$ and
	\begin{align}
	|U_n(x)|&=	\int_{\Omega}\frac{|u_n(y)|^{p(x,y)}}{|x-y|^{N+sp(x,y)}}dy\nonumber\\
	& \leq 	\int_{\Omega}\frac{|u_n(y)|^{p(x,y)}}{(C_d|x|)^{N+sp(x,y)}}dy\nonumber\\ & \leq C\max\left(\frac{1}{|x|^{N+sp^+}}, \frac{1}{|x|^{N+sp^-}}\right).\nonumber
	\end{align}
	Therefore, the sequence $\{|U_n|\}$ is tight and there exists a positive bounded Borel measure $\sigma$ such that $|U_n|\overset{t}{\rightharpoonup}\sigma$. Consider $\phi\in C_c^\infty(\mathbb{R}^N)$ with $0\leq\phi\leq1$, $\phi(0)=1$ and support in the unit ball of $\mathbb{R}^N$. Define $\phi_{\epsilon,j}=\phi(\frac{x-x_j}{\epsilon})$. Since $\{u_n\}$ is a (P-S) sequence, we have $\Psi(u_n)\rightarrow c$ and $\Psi^\prime(u_n)\rightarrow 0$ as $n\rightarrow\infty$. On the other hand 
	\begin{align}\label{a1}
	\langle\Psi^\prime(u_n), \phi_{\epsilon,j}u_n\rangle&=\int_{\mathbb{R}^{2N}}\frac{|u_n(x)-u_n(y)|^{p(x,y)-2}(u_n(x)-u_n(y))((\phi_{\epsilon,j}u_n)(x)-(\phi_{\epsilon,j}u_n)(y))}{|x-y|^{N+sp(x,y)}}dydx\nonumber\\&~~+\int_{\mathbb{R}^{2N}}\frac{|u_n(x)-u_n(y)|^{p^+-2}(u_n(x)-u_n(y))((\phi_{\epsilon,j}u_n)(x)-(\phi_{\epsilon,j}u_n)(y))}{|x-y|^{N+sp^+}}dydx\nonumber\\&~~-\int_{\Omega}|u_n|^{r(x)-2}u_n(\phi_{\epsilon,j}u_n)dx-\int_{\Omega}|u_n|^{p_s^*(x)-2}u_n(\phi_{\epsilon,j}u_n)dx\nonumber\\&~~-\lambda\int_{\Omega}f(x,u_n)(\phi_{\epsilon,j}u_n)dx\nonumber\\&=\int_{\mathbb{R}^{2N}}\frac{|u_n(x)-u_n(y)|^{p(x,y)-2}(u_n(x)-u_n(y))u_n(y)(\phi_{\epsilon,j}(x)-\phi_{\epsilon,j}(y))}{|x-y|^{N+sp(x,y)}}dxdy\nonumber\\&~~+\int_{\mathbb{R}^{2N}}\frac{|u_n(x)-u_n(y)|^{p^+-2}(u_n(x)-u_n(y))u_n(y)(\phi_{\epsilon,j}(x)-\phi_{\epsilon,j}(y))}{|x-y|^{N+sp^+}}dxdy\nonumber\\&~~+\int_{\mathbb{R}^{2N}}\frac{|u_n(x)-u_n(y)|^{p(x,y)}\phi_{\epsilon,j}(x)}{|x-y|^{N+sp(x,y)}}dxdy+\int_{\mathbb{R}^{2N}}\frac{|u_n(x)-u_n(y)|^{p^+}\phi_{\epsilon,j}(x)}{|x-y|^{N+sp^+}}dxdy\nonumber\\&~~-\int_{\Omega}|u_n|^{r(x)}\phi_{\epsilon,j}dx-\int_{\Omega}|u_n|^{p_s^*(x)}\phi_{\epsilon,j}dx-\lambda\int_{\Omega}f(x,u_n)u_n\phi_{\epsilon,j}dx.
	\end{align}
	We denote $$H_n(x,y)=\frac{|u_n(x)-u_n(y)|}{|x-y|^{\frac{N+sp(x,y)}{p(x,y)}}}~\text{ and }~\Phi_n(x,y)=\frac{|u_n(y)||\phi_{\epsilon,j}(x)-\phi_{\epsilon,j}(y)|}{|x-y|^{\frac{N+sp(x,y)}{p(x,y)}}}.$$ Since, $\{u_n\}$ is a bounded sequence in $W_0$, on using the H\"{o}lder's inequality on the first term in the right hand side of $\eqref{a1}$ we observe
	\begin{align}\label{a}
	\Big|\int_{\mathbb{R}^{2N}}&\frac{|u_n(x)-u_n(y)|^{p(x,y)-2}(u_n(x)-u_n(y))u_n(y)(\phi_{\epsilon,j}(x)-\phi_{\epsilon,j}(y))}{|x-y|^{N+sp(x,y)}}dxdy\Big|\nonumber\\&\leq \int_{\mathbb{R}^{2N}}\frac{|u_n(x)-u_n(y)|^{p(x,y)-1}|u_n(y)||\phi_{\epsilon,j}(x)-\phi_{\epsilon,j}(y)|}{|x-y|^{N+sp(x,y)}}dxdy\nonumber\\&=\int_{\mathbb{R}^{2N}}\left(\frac{|u_n(x)-u_n(y)|}{|x-y|^{\frac{N+sp(x,y)}{p(x,y)}}}\right)^{p(x,y)-1}\left(\frac{|u_n(y)||\phi_{\epsilon,j}(x)-\phi_{\epsilon,j}(y)|}{|x-y|^{\frac{N+sp(x,y)}{p(x,y)}}}\right)dxdy\nonumber\\&\leq C \||H_n|^{p(\cdot,\cdot)-1}\|_{L^{\frac{p(\cdot,\cdot)}{p(\cdot,\cdot)-1}}(\mathbb{R}^N\times\mathbb{R}^N)}\|\Phi_n\|_
	{L^{p(\cdot,\cdot)}(\mathbb{R}^N\times\mathbb{R}^N)}\nonumber\\&\leq C \||H_n|^{p(\cdot,\cdot)-1}\|_{L^{\frac{p(\cdot,\cdot)}{p(\cdot,\cdot)-1}}(\mathbb{R}^N\times\mathbb{R}^N)}\left(\int_{\mathbb{R}^{2N}}\frac{|u_n(y)|^{p(x,y)}|\phi_{\epsilon,j}(x)-\phi_{\epsilon,j}(y)|^{p(x,y)}}{|x-y|^{N+sp(x,y)}}dxdy\right)^{\frac{1}{k}}\nonumber\\& \leq C\left(\int_{\mathbb{R}^{2N}}\frac{\left(|u_n(x)|^{p^+}+|u_n(x)|^{p^-}\right)|\phi_{\epsilon,j}(x)-\phi_{\epsilon,j}(y)|^{p(x,y)}}{|x-y|^{N+sp(x,y)}}dxdy\right)^{\frac{1}{k}}
	\end{align}
	where $k$ is either $p^+$ or $p^-$. Using the weak convergence $u_n\rightarrow u$ in $W_0^{s,p^+}(\Omega)$, we have $u_n\rightarrow u$ strongly in $L^{p^+}(\Omega)$ and in $L^{p^-}(\Omega)$. Further, according to Lemma $\ref{lemma1}$, $\int_{\mathbb{R}^{N}}\frac{|\phi_{\epsilon}(x)-\phi_{\epsilon}(y)|^{p(x,y)}}{|x-y|^{N+sp(x,y)}}dx\in L^{\infty}(\mathbb{R}^N)$. Thus, applying limit $n\rightarrow\infty$ in the above inequality $\eqref{a}$ we get 
	\begin{align}\label{aa}
	\lim_{n\rightarrow \infty}	\Big|\int_{\mathbb{R}^{2N}}&\frac{|u_n(x)-u_n(y)|^{p(x,y)-2}(u_n(x)-u_n(y))u_n(y)(\phi_{\epsilon,j}(x)-\phi_{\epsilon,j}(y))}{|x-y|^{N+sp(x,y)}}dxdy\Big|\nonumber\\&\leq C\left(\int_{\mathbb{R}^{2N}}\frac{\left(|u(x)|^{p^+}+|u(x)|^{p^-}\right)|\phi_{\epsilon,j}(x)-\phi_{\epsilon,j}(y)|^{p(x,y)}}{|x-y|^{N+sp(x,y)}}dxdy\right)^{\frac{1}{k}}.
	\end{align} 
 We now make the following claim.\\
	{\it Claim:} $$\lim_{\epsilon\rightarrow0}\int_{\mathbb{R}^{2N}}\frac{\left(|u(x)|^{p^+}+|u(x)|^{p^-}\right)|\phi_{\epsilon,j}(x)-\phi_{\epsilon,j}(y)|^{p(x,y)}}{|x-y|^{N+sp(x,y)}}dxdy=0.$$
		Without loss of generality, we assume that $x_j=0$ and denote $\phi_\epsilon=\phi_{\epsilon,j}$. Using Lemma $\ref{lemma1}$ we have 
		\begin{equation}
			\int_{\mathbb{R}^N}\frac{|\phi_{\epsilon}(x)-\phi_{\epsilon}(y)|^{p(x,y)}}{|x-y|^{N+sp(x,y)}}dy\leq C\min\left(1/\epsilon^{sp^+}+1/\epsilon^{sp^-}, (\epsilon^N+ \epsilon^{N+s(p^--p^+)})|x|^{-(N+sp^-)}\right).\nonumber
		\end{equation}
		Thus, 
		\begin{align}\label{2}
			&\int_{\mathbb{R}^{2N}}\left(|u(x)|^{p^+}+|u(x)|^{p^-}\right)\frac{|\phi_\epsilon(x)-\phi_\epsilon(y)|^{p(x,y)}}{|x-y|^{N+sp(x,y)}}dydx\nonumber\\& \leq C\left(\frac{1}{\epsilon^{sp^+}}+\frac{1}{\epsilon^{sp^-}}\right)\int_{|x|<2\epsilon}|u(x)|^{p^+}+|u(x)|^{p^-}dx\nonumber	\end{align}\begin{align}&~~~~+ C(\epsilon^N+ \epsilon^{N+s(p^--p^+)})\int_{|x|\geq2\epsilon}\frac{|u(x)|^{p^+}+|u(x)|^{p^-}}{|x|^{N+sp^-}}dx\nonumber\\& =C(I+II).
		\end{align}
		We observe 
	\begin{align}
		I&=\left(\frac{1}{\epsilon^{sp^+}}+\frac{1}{\epsilon^{sp^-}}\right)\int_{|x|<2\epsilon}|u(x)|^{p^+}+|u(x)|^{p^-}dx \nonumber\\&\leq\left(\frac{1}{\epsilon^{sp^+}}+\frac{1}{\epsilon^{sp^-}}\right)\left(\||u|^{p^+}\|_{L^{\frac{(p^+)_s^*}{p^+}}(B_{2\epsilon}(0))}+\||u|^{p^-}\|_{L^{\frac{(p^+)_s^*}{p^+}}(B_{2\epsilon}(0))}\right)\|1\|_{L^{\frac{N}{sp^+}}(B_{2\epsilon}(0))}\nonumber\\&\leq C\epsilon^{sp^+}\left(\frac{1}{\epsilon^{sp^+}}+\frac{1}{\epsilon^{sp^-}}\right)\left(\||u|^{p^+}\|_{L^{\frac{(p^+)_s^*}{p^+}}(B_{2\epsilon}(0))}+\||u|^{p^-}\|_{L^{\frac{(p^+)_s^*}{p^+}}(B_{2\epsilon}(0))}\right).\nonumber
		\end{align}
	Hence, $I\rightarrow 0$ as $\epsilon\rightarrow 0.$	Similarly the second term in the right hand side of $\eqref{2}$ can be rewritten as follows.
		\begin{align}
			II&=\sum_{k=1}^{\infty}(\epsilon^N+\epsilon^{N+s(p^--p^+)}) \int_{2^k\epsilon\leq|x|\leq2^{k+1}\epsilon}\frac{|u(x)|^{p^+}+|u(x)|^{p^-}}{|x|^{N+sp^-}}dx\nonumber\\& \leq \sum_{k=1}^{\infty}\frac{1}{2^{k(N+sp^-)}}(1/\epsilon^{sp^-}+1/\epsilon^{sp^+}) \int_{|x|\leq2^{k+1}\epsilon}{|u(x)|^{p^+}+|u(x)|^{p^-}}dx\nonumber\\&\leq\sum_{k=1}^{\infty}\frac{C\epsilon^{sp^+}}{2^{k(N+sp^--sp^+)}}(1/\epsilon^{sp^-}+1/\epsilon^{sp^+})\left(\||u|^{p^+}\|_{L^{\frac{(p^+)_s^*}{p^+}}(B_{2^{k+1}\epsilon}(0))}+\||u|^{p^-}\|_{L^{\frac{(p^+)_s^*}{p^+}}(B_{2^{k+1}\epsilon}(0))}\right)\nonumber\\&\leq\sum_{k=1}^{\infty}\frac{C\epsilon^{sp^+}}{2^{ksp^-}}(1/\epsilon^{sp^-}+1/\epsilon^{sp^+})\left(\||u|^{p^+}\|_{L^{\frac{(p^+)_s^*}{p^+}}(B_{2^{k+1}\epsilon}(0))}+\||u|^{p^-}\|_{L^{\frac{(p^+)_s^*}{p^+}}(B_{2^{k+1}\epsilon}(0))}\right)
		\end{align}
		Now for any $\gamma>0$, there exists a $k'\in \mathbb{N}$ such that $\sum_{k=k'+1}^{\infty}2^{-ksp^-}<\gamma$. So,
		\begin{align}\label{b}
			II&\leq \gamma C\epsilon^{sp^+}\left(\frac{1}{\epsilon^{sp^+}}+\frac{1}{\epsilon^{sp^-}}\right)\left(\||u|^{p^+}\|_{L^{\frac{(p^+)_s^*}{p^+}}(\mathbb{R}^N)}+\||u|^{p^-}\|_{L^{\frac{(p^+)_s^*}{p^+}}(\mathbb{R}^N)}\right)\nonumber\\&~~+C\epsilon^{sp^+}\left(\frac{1}{\epsilon^{sp^+}}+\frac{1}{\epsilon^{sp^-}}\right)\sum_{k=1}^{k'}\frac{1}{2^{ksp^-}}\left(\||u|^{p^+}\|_{L^{\frac{(p^+)_s^*}{p^+}}(B_{2^{k'+1}\epsilon}(0))}+\||u|^{p^-}\|_{L^{\frac{(p^+)_s^*}{p^+}}(B_{2^{k'+1}\epsilon}(0))}\right).
		\end{align} 
		Hence, $II\rightarrow0$ as $\epsilon\rightarrow0$. Hence, the claim. Therefore, from the inequality $\eqref{aa}$ we establish
	$$\lim_{\epsilon\rightarrow 0}\lim_{n\rightarrow \infty}\int_{\mathbb{R}^{2N}}\frac{|u_n(x)-u_n(y)|^{p(x,y)-2}(u_n(x)-u_n(y))u_n(y)(\phi_{\epsilon,j}(x)-\phi_{\epsilon,j}(y))}{|x-y|^{N+sp(x,y)}}dxdy\rightarrow 0.$$
 Following the argument used in the proof of $\eqref{a}$ and using $\eqref{claim}$ we find 
 \begin{equation}
\lim_{\epsilon\rightarrow 0}\lim_{n\rightarrow \infty} \int_{\mathbb{R}^{2N}}\frac{|u_n(x)-u_n(y)|^{p^+-2}(u_n(x)-u_n(y))u_n(y)(\phi_{\epsilon,j}(x)-\phi_{\epsilon,j}(y))}{|x-y|^{N+sp^+}}dxdy \rightarrow 0.\nonumber
 \end{equation}
Applying $(\mathcal{F}_2)$ and passing the limit $n\rightarrow\infty$ in the inequality $\eqref{a1}$ we get $$0=\int_{\mathbb{R}^{N}}\phi_{\epsilon,j}d\mu+\int_{\mathbb{R}^{N}}\phi_{\epsilon,j}d\sigma-\int_{\Omega}\phi_{\epsilon,j}d\nu-\lambda\int_{\Omega}f(x,u)u\phi_{\epsilon,j}dx.$$
Since $\phi_{\epsilon,j}(x)\rightarrow 0$ as $\epsilon\rightarrow0$ for any $x\neq x_j$ and $\phi(0)=1$, we have $$\lim_{\epsilon\rightarrow 0}\int_{\Omega}f(x,u)u\phi_{\epsilon,j}dx\rightarrow0,~\lim_{\epsilon\rightarrow 0}\int_{\Omega}\phi_{\epsilon,j}d\nu=\nu_j,$$ $$\lim_{\epsilon\rightarrow 0}\int_{\mathbb{R}^{N}}\phi_{\epsilon,j}d\mu=\mu_j,~\lim_{\epsilon\rightarrow 0}\int_{\mathbb{R}^{N}}\phi_{\epsilon,j}d\sigma=\sigma_j$$
where $\nu_j=\nu(\{x_j\})$, $\mu_j=\mu(\{x_j\})$ and $\sigma_j=\sigma(\{x_j\})$.	Hence, $\mu_i+\sigma_i=\nu_i$ for every $i\in I$. This implies $\mu_i\leq\nu_i$ and from $\eqref{cc4}$ we get
	\begin{equation}
\nu_i\leq2\max\left\{\frac{\nu_j^{\frac{(p^+)_s^*}{p^+}}}{S_{p^+,r,p_s^*}^{(p^+)_s^*}},\frac{\nu_j^{\frac{m}{p^+}}}{S_{p^+,r,p_s^*}^{m}}\right\}, ~\forall i\in I.\nonumber
	\end{equation}
	This implies, either $\nu_i=0$ or $\min\left\{2^{\frac{sp^+-N}{sp^+}}S_{p^+,r,p_s^*}^{\frac{N}{s}},2^{\frac{p^+}{p^+-m}}S_{p^+,r,p_s^*}^{\frac{mp^+}{m-p^+}} \right\}\leq \nu_i.$\\
Denote $m=\min\{r^-,(p^*_s)^-\}$ and $A_\gamma=\underset{x\in A=A_1\cup A_2}{\cup}(B_\gamma(x)\cap\Omega)$ for some $\gamma>0$. Then, by $(\mathcal{F}_1)$ and $(\mathcal{F}_4)$, we obtain
	\begin{align}\label{estimate}
	c&=\lim_{n\rightarrow \infty}\Psi(u_n)\nonumber\\&=\lim_{n\rightarrow \infty}\left(\Psi(u_n)-\frac{1}{p^+}\langle\Psi^\prime(u_n),u_n\rangle\right)\nonumber\\&=\lim_{n\rightarrow \infty}
	\left(\int_{\mathbb{R}^{2N}}\left(\frac{1}{p(x,y)}-\frac{1}{p^+}\right)\frac{|u_n(x)-u_n(y)|^{p(x,y)}}{|x-y|^{N+sp(x,y)}}dxdy+\int_{\Omega}\left(\frac{1}{p^+}-\frac{1}{p_s^*(x)}\right)|u_n|^{p_s^*(x)}dx\right)\nonumber\\&~~~~+\lim_{n\rightarrow \infty}\int_{\Omega}\left(\frac{1}{p^+}-\frac{1}{r(x)}\right)|u_n|^{r(x)}dx-\lambda \lim_{n\rightarrow \infty}\int_{\Omega}\left(F(x,u_n)-\frac{1}{p^+}f(x,u_n)u_n\right)dx\nonumber\\& \geq \lim_{n\rightarrow \infty}\left(\frac{1}{p^+}-\frac{1}{m}\right)\int_{A_\gamma}\left(|u_n|^{r(x)}+|u_n|^{p^*_s(x)}\right) dx-\frac{a}{m}\lambda \int_{\Omega}|u|^{\alpha(x)}dx\nonumber\\
	&=\left(\frac{1}{p^+}-\frac{1}{m}\right)\left(\int_{A_\gamma}\left(|u|^{r(x)}+|u|^{p^*_s(x)}\right)dx+\sum_{i\in I}\nu_i\right)-\frac{a}{m}\lambda\max\{\|u\|_{L^{\alpha(\cdot)}(\Omega)}^{\alpha^+},\|u\|_{L^{\alpha(\cdot)}(\Omega)}^{\alpha^-}\}\nonumber\\&\geq\left(\frac{1}{p^+}-\frac{1}{m}\right)\nu_j+\left(\frac{1}{p^+}-\frac{1}{m}\right)\|u\|_{L^{r(\cdot)}(\Omega)}^{\bar{r}}-\frac{a}{m}\lambda\|u\|_{L^{\alpha(\cdot)}(\Omega)}^{\bar{\alpha}}\nonumber\\&\geq\left(\frac{1}{p^+}-\frac{1}{m}\right)\min\left\{2^{\frac{sp^+-N}{sp^+}}S_{p^+,r,p_s^*}^{\frac{N}{s}},2^{\frac{p^+}{p^+-m}}S_{p^+,r,p_s^*}^{\frac{mp^+}{m-p^+}} \right\}+\left(\frac{1}{p^+}-\frac{1}{m}\right)\|u\|_{L^{r(\cdot)}(\Omega)}^{\bar{r}}-C_0\lambda\|u\|_{L^{r(\cdot)}(\Omega)}^{\bar{\alpha}}.
	\end{align}
	Here $\|u\|_{L^{r(\cdot)}(\Omega)}^{\bar{r}}=\min\{\|u\|_{L^{r(\cdot)}(\Omega)}^{r^+},\|u\|_{L^{r(\cdot)}(\Omega)}^{r^-}\}$, $\|u\|_{L^{\alpha(\cdot)}(\Omega)}^{\bar{\alpha}}=\max\{\|u\|_{L^{\alpha(\cdot)}(\Omega)}^{\alpha^+},\|u\|_{L^{\alpha(\cdot)}(\Omega)}^{\alpha^-}\}$ and $C_0=\frac{a}{m}\left[2(1+|\Omega|)\right]^{\bar{\alpha}}$ (see Proposition $\ref{holder}$). Clearly, $\bar{\alpha}<\bar{r}$. Let us consider the function $h:(0,\infty)\rightarrow\mathbb{R}$ by
	$$h(x)=\left(\frac{1}{p^+}-\frac{1}{m}\right)x^{\bar
		r}-C_0\lambda x^{\bar{\alpha}}.$$
Thus, $h$ attains its minimum at $\bar{x}=\left(\frac{\lambda C_0\bar{\alpha}mp^+}{\bar{r}(m-p^+)}\right)^{\frac{1}{\bar{r}-\bar{\alpha}}}$ and $$h(x)\geq h(\bar{x})=-C_1\lambda^{C_2}$$
where $C_1=C_0\left(\frac{\bar{r}-\bar{\alpha}}{\bar{r}}\right)\left(\frac{ C_0\bar{\alpha}mp^+}{\bar{r}(m-p^+)}\right)^{\frac{\bar{\alpha}}{\bar{r}-\bar{\alpha}}}>0$, $C_2=\frac{\bar{r}}{\bar{r}-\bar{\alpha}}$.	Therefore, from $\eqref{estimate}$ we obtain
	$$c\geq \left(\frac{1}{p^+}-\frac{1}{m}\right)\min\left\{2^{\frac{sp^+-N}{sp^+}}S_{p^+,r,p_s^*}^{\frac{N}{s}},2^{\frac{p^+}{p^+-m}}S_{p^+,r,p_s^*}^{\frac{mp^+}{m-p^+}} \right\}-C_1\lambda^{C_2}.$$
	This implies the indexing set $I=\emptyset$ if we are to have    $$c<\left(\frac{1}{p^+}-\frac{1}{m}\right)\min\left\{2^{\frac{sp^+-N}{sp^+}}S_{p^+,r,p_s^*}^{\frac{N}{s}},2^{\frac{p^+-m}{p^+}}S_{p^+,r,p_s^*}^{\frac{mp^+}{m-p^+}} \right\}-C_1\lambda^{C_2}.$$ 
	Hence, $|u_n|^{r(\cdot)}\overset{t}{\rightharpoonup}|u|^{r(\cdot)}$ and $|u_n|^{p_s^*(\cdot)}\overset{t}{\rightharpoonup}|u|^{p_s^*(\cdot)}$. Therefore, using Prokhorov's theorem we have $u_n\rightarrow u$ strongly in $L^{r(\cdot)}(\Omega)$ and in $L^{p_s^*(\cdot)}(\Omega)$.\\
	Define
	$$\langle I_1(u),v\rangle=\int_{\mathbb{R}^{2N}}\frac{|u(x)-u(y)|^{p(x,y)-2}(u(x)-u(y))(v(x)-v(y))}{|x-y|^{N+sp(x,y)}}dxdy$$ and $$\langle I_2(u),v\rangle=\int_{\mathbb{R}^{2N}}\frac{|u(x)-u(y)|^{p^+-2}(u(x)-u(y))(v(x)-v(y))}{|x-y|^{N+sp^+}}dxdy.$$
Observe
	\begin{align}\label{s}
	\langle\Psi^\prime(u_n),(u_n-u)\rangle&=\langle I_1(u_n),(u_n-u)\rangle+\langle I_2(u_n),(u_n-u)\rangle-\int_{\Omega}|u_n|^{r(x)-2}u_n(u_n-u)dx\nonumber\\&~~~~-\int_{\Omega}|u_n|^{p_s^*(x)-2}u_n(u_n-u)dx-\lambda\int_{\Omega}f(x,u_n)(u_n-u)dx.
	\end{align}
	Since $\{u_n\}$ is bounded in $X$, $\{u_n\}$ is also bounded in $L^{r(\cdot)}(\Omega)$ and  $L^{p_s^*(\cdot)}(\Omega)$. Thus, on applying the H\"{o}lder's inequality we get
	\begin{align}
	\left|\int_{\Omega}|u_n|^{r(x)-2}u_n(u_n-u)dx\right|&\leq \int_{\Omega}|u_n|^{r(x)-1}|u_n-u|dx\nonumber\\&\leq C \||u_n|^{r(\cdot)-1}\|_{L^{\frac{r(\cdot)}{r(\cdot)-1}}(\Omega)}\|u_n-u\|_{L^{r(\cdot)}(\Omega)}\nonumber\\&\leq C\|u_n-u\|_{L^{r(\cdot)}(\Omega)}=o_n(1).
	\end{align}
	Similarly we can show that $$\lim_{n\rightarrow \infty}\left(\int_{\Omega}\lambda f(x,u_n)(u_n-u)dx+\int_{\Omega}|u_n|^{p_s^*(x)-2}u_n(u_n-u)dx\right)\rightarrow 0.$$ Passing the limit $n\rightarrow \infty$ in $\eqref{s}$ we get
	$\langle I_1(u_n),(u_n-u)\rangle+\langle I_2(u_n),(u_n-u)\rangle\rightarrow 0$. Therefore,
	\begin{equation}\label{s+}
	\lim_{n\rightarrow \infty}\left(\langle I_1(u_n)-I_1(u),(u_n-u)\rangle+\langle I_2(u_n)-I_2(u),(u_n-u)\rangle\right)=0.
	\end{equation}
		Recall the Simon's inequality \cite{Simon} given as
		\begin{align}
		|x-y|^p&\leq\frac{1}{p-1}\left[\left(|x|^{p-2}x-|y|^{p-2}y\right).(x-y)\right]^{\frac{p}{2}}\left(|x|^p+|y|^p\right)^{\frac{2-p}{2}},~\text{if}~1<p<2.\nonumber\\|x-y|^p&\leq 2^p \left(|x|^{p-2}x-|y|^{p-2}y\right).(x-y),,~\text{if}~p\geq2.
		\end{align}
Let us first consider the case $p^+>2$. Then, using the Simon's inequality we have 
\begin{align}\label{G}
\|u_n-u\|_{s,p^+}^{p^+}&=\int_{\mathbb{R}^{2N}}\frac{|(u_n-u)(x)-(u_n-u)(y)|^{p^+}}{|x-y|^{N+sp^+}}dxdy\nonumber\\&\leq\frac{1}{(p^+-1)}\int_{\mathbb{R}^{2N}}
\left\{\frac{|u_n(x)-u_n(y)|^{p^+-2}(u_n(x)-u_n(y))((u_n-u)(x)-(u_n-u)(y))}{|x-y|^{N+sp^+}}\right. \nonumber \\
&~~~~\left.\kern-\nulldelimiterspace -\;\frac{|u(x)-u(y)|^{p^+-2}(u(x)-u(y))((u_n-u)(x)-(u_n-u)(y))}{|x-y|^{N+sp^+}}\vphantom{}\right\}\nonumber\\&\leq C_1 \langle I_2(u_n)-I_2(u),(u_n-u)\rangle.
\end{align}
Similarly for $1<p^+<2$, using H\"{o}lder's inequality and the boundedness of $\{u_n\}$ in $W_0^{s,p^+}(\Omega)$, we establish the following.
\begin{align}\label{L}
\|u_n-u\|_{s,p^+}^{p^+}&\leq 2^{p^+}\langle I_2(u_n)-I_2(u),(u_n-u)\rangle^{\frac{p^+}{2}}\left(\|u_n\|_{s,p^+}^{p^+}+\|u\|_{s,p^+}^{p^+}\right)^{\frac{2-p^+}{2}}\nonumber\\& \leq C_2\langle I_2(u_n)-I_2(u),(u_n-u)\rangle^{\frac{p^+}{2}}\left(\|u_n\|_{s,p^+}^{p^+\frac{2-p^+}{2}}+\|u\|_{s,p^+}^{p^+\frac{2-p^+}{2}}\right)\nonumber\\&\leq C_3\langle I_2(u_n)-I_2(u),(u_n-u)\rangle^{\frac{p^+}{2}}.
\end{align}
On combining the inequalities $\eqref{s+}$, $\eqref{G}$ and $\eqref{L}$, we obtain
$$\lim_{n\rightarrow \infty}\langle I_1(u_n)-I_1(u),(u_n-u)\rangle\leq0.$$
	Since $I_1$ is of $(S_+)$-type by Lemma $\ref{bah}$ (refer Appendix), we conclude that $u_n\rightarrow u$ strongly in $W_0$ and hence by simple calculation we get $\lim_{n\rightarrow \infty}\langle I_1(u_n)-I_1(u),(u_n-u)\rangle=0$. Therefore, from $\eqref{s+}$, $\lim_{n\rightarrow \infty}\langle I_2(u_n)-I_2(u),(u_n-u)\rangle=0$. By $\eqref{G}$ and $\eqref{L}$ we obtain that $u_n\rightarrow u$ strongly in $W_0^{s,p^+}(\Omega)$ and hence $u_n\rightarrow u$ strongly in X.
\end{proof}
\noindent The functional $\Psi$ does not satisfy the hypotheses of Lemma $\ref{symmetric}$, since $\Psi$ is not bounded from below. Thus, we now introduce a truncated functional related to $\Psi$ that verifies the assumptions of Lemma $\ref{symmetric}$.\\
Let $\lambda_1$ be the first eigenvalue of $(-\Delta)^s_{p^+}$ (see \cite{Lindgren}), i.e
$$\lambda_1=\underset{u\in W_0^{sp^+}(\Omega)\setminus\{0\}}{\inf}\frac{\|u\|_{s,p^+}^{p^+}}{\|u\|_{L^{p^+}(\Omega)}^{p^+}}.$$
Then, for any $\lambda\in (0,\lambda_1)$, using Proposition $\ref{holder}$, Theorem $\ref{cap}$, $\eqref{best constant}$ and $(\mathcal{F}_2)$ we observe
	\begin{align}
	\Psi(u)&=\int_{\mathbb{R}^N}\int_{\mathbb{R}^N}\frac{1}{p(x,y)}\frac{|u(x)-u(y)|^{p(x,y)}}{|x-y|^{N+sp(x,y)}}dydx+\frac{1}{p^+}\int_{\mathbb{R}^N}\int_{\mathbb{R}^N}\frac{|u(x)-u(y)|^{p^+}}{|x-y|^{N+sp^+}}dydx\nonumber\\&~~~~-\int_{\Omega}\frac{1}{r(x)}|u|^{r(x)}dx-\int_{\Omega}\frac{1}{p_s^*(x)}|u|^{p_s^*(x)}dx-\lambda\int_{\Omega}F(x,u)dx\nonumber\\&\geq \frac{1}{p^+}\|u\|_{s,p^+}^{p^+}-\frac{1}{r^-}\left(\frac{\|u\|_{s,p^+}}{S_{p^+,r,p_s^*}}\right)^{r^\pm}-\frac{1}{(p^*)^-}\left(\frac{\|u\|_{s,p^+}}{S_{p^+,r,p_s^*}}\right)^{(p_s^*)^\pm}\nonumber\\&~~~~-\frac{\lambda_1C}{\beta^-}\left(\frac{2(1+|\Omega|)\|u\|_{s,p^+}}{S_{p^+,r,p_s^*}}\right)^{\beta^\pm}-2\lambda C(1+|\Omega|)\frac{\|u\|_{s,p^+}}{S_{p^+,r,p_s^*}}.\nonumber
	\end{align}
Here $U^{\alpha^\pm}$ denotes $\max\{U^{\alpha^+}, U^{\alpha^-}\}$. Thus, we have 
$$\Psi(u)\geq A\|u\|_{s,p^+}^{p^+}-B\|u\|_{s,p^+}^{r^\pm}-C\|u\|_{s,p^+}^{(p_s^*)^\pm}-D\|u\|_{s,p^+}^{\beta^\pm}-\lambda E\|u\|_{s,p^+},$$
where $$A=\frac{1}{p^+},~B=\frac{S_{p^+,r,p_s^*}^{-r^\pm}}{r^-},~C=\frac{S_{p^+,r,p_s^*}^{-(p_s^*)^\pm}}{(p_s^*)^-},~D=\frac{\lambda_1C}{\beta^-}\left(\frac{2(1+|\Omega|)}{S_{p^+,r,p_s^*}}\right)^{\beta^\pm},~E=2 C(1+|\Omega|)S_{p^+,r,p_s^*}^{-1}.$$
Let us define a function $g:(0,\infty)\rightarrow\mathbb{R}$ by
$$g(x)=Ax^{p^+}-Bx^{r^\pm}-Cx^{(p_s^*)^\pm}-Dx^{\beta^\pm}-\lambda Ex.$$
We have $1<p^+<\beta^-\leq \beta^+<m=\min\{(p_s^*)^-,r^-\}$. Therefore, we may choose $\lambda_1\geq \Lambda>0$, very small, such that for every $\lambda\in (0,\Lambda)$ function $g$ has finitely many positive roots, say $0<r_1<r_2<\cdot\cdot\cdot<r_m<\infty$ and
\begin{enumerate}
	\item[(i)] $g$ attains its maximum with $\underset{x\in(0,\infty)}{\max}g(x)>0$,
	\item[(ii)] $\left(\frac{1}{p^+}-\frac{1}{m}\right)\min\left\{2^{\frac{sp^+-N}{sp^+}}S_{p^+,r,p_s^*}^{\frac{N}{s}},2^{\frac{p^+}{p^+-m}}S_{p^+,r,p_s^*}^{\frac{mp^+}{m-p^+}} \right\}-C_1\lambda^{C_2}>0$ \\where $C_1,C_2$ are given in Lemma $\ref{palais}$.
\end{enumerate}
More precisely,
\begin{eqnarray}\label{g}
\begin{split}
g(x)&\begin{cases}
<0 ~\forall~ x\in A=(0,r_1)\cup(r_2,r_3)\cup\cdot\cdot\cdot\cup(r_m,\infty)\\
>0 ~\forall~x\in B= (r_1,r_2)\cup\cdot\cdot\cdot\cup(r_{m-1},r_m).
\end{cases}
\end{split}
\end{eqnarray} 
Let us consider the truncated functional $\bar{\Psi}:X\rightarrow\mathbb{R}$ defined as
\begin{align}
\bar{\Psi}(u)&=\iint_{\mathbb{R}^{2N}}\frac{1}{p(x,y)}\frac{|u(x)-u(y)|^{p(x,y)}}{|x-y|^{N+sp(x,y)}}dydx+\frac{1}{p^+}\iint_{\mathbb{R}^{2N}}\frac{|u(x)-u(y)|^{p^+}}{|x-y|^{N+sp^+}}dydx\nonumber\\&~~~~-\tau(\|u\|_{s,p^+})\int_{\Omega}\frac{1}{r(x)}|u|^{r(x)}dx-\tau(\|u\|_{s,p^+})\int_{\Omega}\frac{1}{p_s^*(x)}|u|^{p_s^*(x)}dx-\lambda \tau(\|u\|_{s,p^+})\int_{\Omega}F(x,u)dx,\nonumber
\end{align}
where $\tau\in C^\infty(\mathbb{R}^+:[0,1])$ such that
\begin{eqnarray}\label{g1}
\begin{split}
\tau(x)=&\begin{cases}
 1~\text{if}~ x\in A\setminus(r_m,\infty)\\
0 ~\text{if}~x\in (r_m,\infty).
\end{cases}
\end{split}
\end{eqnarray}
Hence, we have 
\begin{align}\label{g2}
\bar{\Psi}(u)&\geq A\|u\|_{s,p^+}^{p^+}-B\tau(\|u\|_{s,p^+})\|u\|_{s,p^+}^{r^\pm}-C\tau(\|u\|_{s,p^+})\|u\|_{s,p^+}^{(p_s^*)^\pm}\nonumber\\&~~~~-D\tau(\|u\|_{s,p^+})\|u\|_{s,p^+}^{\beta^\pm}-\lambda E\tau(\|u\|_{s,p^+})\|u\|_{s,p^+}\nonumber\\&:= \bar{g}(\|u\|_{s,p^+}),
\end{align}
where $\bar{g}(x)=Ax^{p^+}-B\tau(x)x^{r^\pm}-C\tau(x)x^{(p_s^*)^\pm}-D\tau(x)x^{\beta^\pm}-\lambda E\tau(x)x.$ It is simple to check that
\begin{eqnarray}\label{g3}
\begin{split}
\bar{g}(x)&\begin{cases}
\geq g(x)~\forall~x>0\\
= g(x) ~\forall~x\in A\setminus(r_m,\infty)\\
\geq 0~\forall~x>r_m
\end{cases}
\end{split}
\end{eqnarray}
and thus 
\begin{equation}\label{g4}
\Psi(u)=\bar{\Psi}(u), ~\text{for}~\|u\|_{s,p^+}\in A\setminus(r_m\infty).
\end{equation}
By using $\eqref{g}-\eqref{g4}$ and Lemma $\ref{palais}$, it is not difficult to verify the following properties for the functional $\bar{\Psi}$.
\begin{lemma}\label{l1}
	\begin{enumerate}
		\item $\bar{\Psi}\in C^1(X,\mathbb{R})$, $\bar{\Psi}$ is even and bounded from below.
		\item $\|u\|_{s,p^+}\in A\setminus(r_m,\infty)$ whenever $\bar{\Psi}(u)<0$ and there exists a neighbourhood $N_u$ of $u$ such that $\Psi(v)=\bar{\Psi}(v)$ for $v\in N_u$.
		\item Consider $\Lambda>0$ as given above such that $(i)$ and $(ii)$ hold. Then, for any $\lambda\in (0,\Lambda)$, the functional $\bar{\Psi}$ satisfies the Palais-Smale (P-S) condition for every $c<0$. 
	\end{enumerate}
\end{lemma}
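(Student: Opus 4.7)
The proof splits into the three claims; my plan is to use the lower estimate $\bar{\Psi}(u)\ge \bar{g}(\|u\|_{s,p^+})$ of $\eqref{g2}$, the piecewise structure of $\tau$ in $\eqref{g1}$--$\eqref{g4}$, and Lemmas $\ref{bound}$--$\ref{palais}$.

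For the first claim, the $C^1$ regularity of $\Psi$ on $X$ has already been recorded just after $\eqref{func}$, and since $\tau\in C^\infty(\mathbb{R}^+;[0,1])$ is constant equal to $1$ in a neighbourhood of $0$, the composition $u\mapsto\tau(\|u\|_{s,p^+})$ is $C^1$ on $X$ (using that $W_0^{s,p^+}$ is uniformly convex so $u\mapsto\|u\|_{s,p^+}$ is $C^1$ on $W_0^{s,p^+}\setminus\{0\}$, together with the constancy of $\tau$ near $0$ which smooths out the only bad point); the product rule then gives $\bar{\Psi}\in C^1(X,\mathbb{R})$. Evenness follows term by term, since the Gagliardo-type and critical integrands only see $|u|$ or $|u(x)-u(y)|$, the norm $\|\cdot\|_{s,p^+}$ is even, and $(\mathcal{F}_1)$ forces $F(x,-t)=F(x,t)$. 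For the lower bound I would invoke $\eqref{g2}$: $\bar{g}$ is continuous on $[0,r_m]$ and equals $Ax^{p^+}\ge 0$ on $(r_m,\infty)$ (where $\tau\equiv 0$), so it is bounded below on $(0,\infty)$, hence so is $\bar{\Psi}$.

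For the second claim, if $\bar{\Psi}(u)<0$ then $\bar{g}(\|u\|_{s,p^+})\le \bar{\Psi}(u)<0$. By $\eqref{g3}$, $\bar{g}\ge 0$ on $(r_m,\infty)$, which rules out $\|u\|_{s,p^+}>r_m$; and the inequality $\bar{g}\ge g$ forces $g(\|u\|_{s,p^+})<0$, hence $\|u\|_{s,p^+}\in A$. Combining gives $\|u\|_{s,p^+}\in A\setminus(r_m,\infty)$, which is an open subset of $(0,\infty)$. Continuity of $v\mapsto\|v\|_{s,p^+}$ on $X$ (it is $1$-Lipschitz with respect to $\|\cdot\|_X$) then furnishes an open neighbourhood $N_u$ of $u$ on which $\|v\|_{s,p^+}\in A\setminus(r_m,\infty)$, so $\tau(\|v\|_{s,p^+})\equiv 1$ and $\eqref{g4}$ yields $\Psi\equiv\bar{\Psi}$ on $N_u$.

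The third claim is the heart of the lemma and the step where the threshold engineered into $\Lambda$ pays off. Given a (P-S) sequence $\{u_n\}\subset X$ for $\bar{\Psi}$ at level $c<0$, my plan has four steps. First, $\bar{\Psi}(u_n)\to c<0$ forces $\bar{\Psi}(u_n)<0$ eventually, so by the second claim there are open neighbourhoods $N_{u_n}$ of each $u_n$ on which $\Psi\equiv\bar{\Psi}$; consequently $\Psi(u_n)\to c$ and $\Psi^\prime(u_n)\to 0$, i.e.\ $\{u_n\}$ is also a (P-S) sequence for $\Psi$ at the same level $c$. Second, Lemma $\ref{bound}$ upgrades this to boundedness of $\{u_n\}$ in $X$. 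Third, condition (ii) in the construction of $\Lambda$ is precisely the statement that, for $\lambda\in(0,\Lambda)$, the compactness threshold of Lemma $\ref{palais}$ is strictly positive, so any $c<0$ lies strictly below that threshold. Fourth, Lemma $\ref{palais}$ then produces a subsequence with $u_n\to u$ strongly in $X$, which is the (P-S) condition for $\bar{\Psi}$ at level $c$. The real analytic difficulty was already absorbed into Lemma $\ref{palais}$ via the concentration--compactness analysis; the only delicate point remaining here is the third step, and that is exactly the reason $\Lambda$ was chosen to satisfy (ii).
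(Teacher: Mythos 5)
Your proof is correct and fills in the details that the paper itself omits (the paper only asserts ``it is not difficult to verify the following properties'' and points to $\eqref{g}$--$\eqref{g4}$ and Lemma $\ref{palais}$). The three steps you supply --- the $C^1$/evenness/lower-bound check via $\bar{g}$; the chain $\bar{\Psi}(u)<0 \Rightarrow \bar{g}(\|u\|_{s,p^+})<0 \Rightarrow \|u\|_{s,p^+}\in A\setminus(r_m,\infty)$ together with the $1$-Lipschitz continuity of $v\mapsto\|v\|_{s,p^+}$ to propagate $\Psi\equiv\bar\Psi$ to a neighbourhood; and the transfer of the (P-S) property from $\bar\Psi$ to $\Psi$ at negative levels followed by Lemma $\ref{bound}$, condition (ii), and Lemma $\ref{palais}$ --- are exactly the intended route. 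One small point worth making explicit: a bounded (P-S) sequence alone is not the conclusion; it is Lemma $\ref{palais}$ that gives strong convergence, and that lemma's hypothesis is the strict energy inequality, which is where condition (ii) in the choice of $\Lambda$ (guaranteeing the threshold is strictly positive so every $c<0$ lies below it) is genuinely used --- you state this correctly.
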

\begin{lemma}\label{l2}
Let $(\mathcal{F}_3)$ holds. Then, for every $k\in\mathbb{R}$, there exists $\epsilon(k)>0$ such that $\gamma\left(\{u\in X:\bar{\Psi}(u)\leq -\epsilon(k)\}\setminus\{0\}\right)\geq k.$
	\begin{proof}
	Let $\lambda\in(0,\Lambda)$ and $k\in \mathbb{N}$. Let $Y^k$ be a $k$-dimensional subspace of $X$ and consider $u\in Y^k$ with $\|u\|_X=1$. Thus, for $0<\delta<r_1$ we have $\delta\|u\|_{s,p^+}\leq \delta\|u\|_X<r_1$ and $\tau(\delta\|u\|_{s,p^+})=1$. According to the assumption $(\mathcal{F}_3)$, for any $u\in X\setminus\{0\}$, it holds
	$$F(x,\delta u)\geq H(\delta)(\delta u)^{p^-}~\text{with}~H(\delta)\rightarrow\infty ~\text{as}~\delta\rightarrow 0.$$
	As $Y^k$ is a finite dimensional subspace, all
	the norms in $Y^k$ are equivalent. We now define
	$$\eta_k=\inf\{\|u\|_{L^{p^-}(\Omega)}^{p^-}:u\in Y^k,\|u\|_X=1\}>0.$$
	Thus, for any $u\in Y^k$ with $\|u\|_X=1$ and $\delta\in(0,r_1)$ we have 
	\begin{align}
	\Psi(\delta u)=\bar{\Psi}(\delta u)&\leq \frac{1}{p^+}\delta^{p^+}+\frac{1}{p^-}\delta^{p^\pm}-\lambda H(\delta)\delta^{p^-}\eta_k\nonumber\\&= \left[ \frac{1}{p^+}\delta^{p^+-p^-}+\frac{1}{p^-}\delta^{p^\pm-p^-}-\lambda H(\delta)\eta_k\right]\delta^{p^-}.\nonumber
	\end{align}
	Since $H(\delta)\rightarrow\infty$ as $\delta\rightarrow0$, for any $\epsilon(k)>0$ there exists $\delta(0,r_1)$ such that $\bar{\Psi}(\delta u)\leq -\epsilon(k)$. This implies
	$$A_k=\{u\in Y^k:\|u\|_X=\delta\}\subset\{u\in X:\bar{\Psi}(u)\leq -\epsilon(k)\}\setminus\{0\}$$
	and by Proposition $\ref{properties}$, $\gamma\left(\{u\in X:\bar{\Psi}(u)\leq -\epsilon(k)\}\setminus\{0\}\right)\geq \gamma(A_k\cap Y^k) \geq k.$
\end{proof}
\end{lemma}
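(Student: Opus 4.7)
The plan is a standard genus-via-finite-dimensional-spheres argument adapted to our truncated functional. Fix $k \in \mathbb{N}$. Since $X$ is infinite dimensional, I pick any $k$-dimensional subspace $Y^k \subset X$ (for instance spanned by $k$ smooth compactly supported functions with pairwise disjoint supports, which belong to $L^\infty \cap X$). On $Y^k$ all norms are equivalent, so
\[
\eta_k := \inf\{\|u\|_{L^{p^-}(\Omega)}^{p^-} : u \in Y^k,\ \|u\|_X = 1\} > 0.
\]
For each $\delta > 0$ small, consider the sphere $A_k^{\delta} = \{u \in Y^k : \|u\|_X = \delta\}$, which is odd-homeomorphic to $S^{k-1}$ and hence satisfies $\gamma(A_k^\delta) = k$ by Proposition \ref{properties}. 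If I can find $\delta > 0$ and $\epsilon(k) > 0$ such that $A_k^\delta \subset \{u \in X : \bar{\Psi}(u) \leq -\epsilon(k)\} \setminus \{0\}$, the monotonicity and subset property of the genus immediately give the conclusion.

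The key step is estimating $\bar{\Psi}(\delta u)$ uniformly on the unit $X$-sphere of $Y^k$ for small $\delta$. First, for $\delta < r_1$, one has $\delta \|u\|_{s,p^+} \leq \delta \|u\|_X = \delta < r_1$, so $\tau(\delta\|u\|_{s,p^+}) = 1$ and $\bar{\Psi}(\delta u) = \Psi(\delta u)$. The two Gagliardo-seminorm contributions to $\Psi(\delta u)$ are bounded from above via Lemma \ref{rho} by a constant times $\max(\delta^{p^+},\delta^{p^-})$, which on $[0,1]$ is $\delta^{p^-}$ with a coefficient independent of $u$ (thanks to finite-dimensionality of $Y^k$). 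Condition $(\mathcal{F}_3)$ gives: for every $M > 0$ there exists $\tau_M > 0$ with $F(x,t) \geq (M/p^-)|t|^{p^-}$ whenever $|t| \leq \tau_M$. Since $u$ ranges over the compact unit $X$-sphere of the finite-dimensional space $Y^k$, $\|u\|_{L^\infty(\Omega)}$ is bounded by some $M_k$, so for $\delta \leq \tau_M / M_k$ we have $|\delta u(x)| \leq \tau_M$ a.e., whence
\[
\lambda \int_\Omega F(x,\delta u)\,dx \;\geq\; \lambda\, H(\delta)\, \delta^{p^-}\, \|u\|_{L^{p^-}(\Omega)}^{p^-} \;\geq\; \lambda\, H(\delta)\, \delta^{p^-}\,\eta_k,
\]
with $H(\delta) \to \infty$ as $\delta \to 0^+$. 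Combining these estimates,
\[
\bar{\Psi}(\delta u) \;\leq\; C \delta^{p^-} - \lambda\, H(\delta)\, \delta^{p^-}\,\eta_k \;=\; \delta^{p^-}\bigl(C - \lambda\, H(\delta)\,\eta_k\bigr),
\]
which is bounded above by any prescribed $-\epsilon(k) < 0$ once $\delta$ is chosen sufficiently small.

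The main delicate point is the uniformity in $u \in A_k^\delta$ hidden in the schematic bound $F(x,\delta u) \geq H(\delta)(\delta u)^{p^-}$: strictly speaking $(\mathcal{F}_3)$ is a pointwise-in-$x$ asymptotic as $t \to 0$, so converting it into an integral bound that is independent of $u$ requires uniform smallness of $\delta u(x)$. This is exactly what the finite-dimensionality of $Y^k$ supplies (through the $L^\infty$-bound $M_k$ on the compact unit sphere). Once this uniform integral bound on $F$ is secured, the rest is an elementary competition of powers $\delta^{p^-}$ against the blow-up $H(\delta) \to \infty$, and the genus conclusion follows from $\gamma(A_k^\delta) = k$ together with Proposition \ref{properties}(1).
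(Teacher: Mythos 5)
Your proposal follows essentially the same route as the paper: pick a $k$-dimensional subspace $Y^k$, use equivalence of norms to get $\eta_k>0$, observe that on the small sphere $\tau\equiv 1$ so $\bar{\Psi}=\Psi$, pit $(\mathcal{F}_3)$'s blow-up $H(\delta)\to\infty$ against the remaining $\delta^{p^-}$ powers, and then invoke the genus of the $\delta$-sphere via Proposition \ref{properties}. The one addition is your explicit remark that the pointwise asymptotic in $(\mathcal{F}_3)$ needs the $L^\infty$-bound on the compact unit sphere of the finite-dimensional $Y^k$ to yield a uniform integral estimate — a genuine subtlety the paper's write-up passes over silently, and your version is the more careful one.
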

\noindent We are now ready to prove our main result, i.e. Theorem $\ref{exist}$.
\begin{proof}[Proof of Theorem $\ref{exist}$]
Let us recall that 
$$\Gamma_k=\{A\subset X\setminus\{0\}: A\text{ is closed and symmetric},~0\notin A~\text{and}~\gamma(A)\geq k\}$$
and denote
$$c_k=\underset{A\in \Gamma_k}{\inf}~\underset{u\in A}{\sup}~\bar{\Psi}(u).$$
In the view of Lemma $\ref{l1}$ $(1)$ and Lemma $\ref{l2}$, we deduce that $-\infty<c_k<0$. As $\Gamma_{k+1}\subset \Gamma_k$, is it true that $c_k\leq c_{k+1}<0$ for any $k\in\mathbb{N}$. Thus, $\lim\limits_{k\rightarrow\infty}c_k=c\leq 0$. Following the arguments used in \cite{Rabinowitz}, we conclude that $c=0$ and each $c_k$ is a critical value of the functional $\bar{\Psi}$. Therefore, from Lemma $\ref{palais}$, Lemma $\ref{l1}$ and Lemma $\ref{l2}$, we insure that $\bar{\Psi}$ verifies the hypotheses $(a)$ and $(b)$ of Lemma $\ref{symmetric}$. Finally, with the consideration of Lemma $\ref{l1}$ $(2)$, we guarantee the existence of a sequence of weak solutions $\{u_k\}\subset X$ to $\eqref{app}$ that converges to 0 and hence the proof. 
\end{proof}
\section{Appendix}\label{Appendix}
Following are a few lemmas and results that have been used at several places in the manuscript.
\begin{corollary}[\cite{Bonder1}]\label{Linfinity p^+}
	Let $\phi\in W^{1,\infty}(\mathbb{R}^N)$ such that support of $\phi$ lies in the unit ball of $\mathbb{R}^N$ and given $\epsilon>0$, $x_0\in\mathbb{R}^N$ define $\phi_{\epsilon,x_0}(x)=\phi(\frac{x-x_0}{\epsilon})$. Then,
	$$\int_{\mathbb{R}^N}\frac{|\phi_{\epsilon,x_0}(x)-\phi_{\epsilon,x_0}(y)|^{p}}{|x-y|^{N+sp}}dy\leq C\min\left(\epsilon^{-sp}, \epsilon^N|x-x_0|^{-(N+sp)}\right)$$ where $C$ depends on $N,s,p,\|\phi\|_{1,\infty}$.
\end{corollary}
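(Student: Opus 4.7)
The plan is to reduce to the case $x_0 = 0$ by translation invariance, write $\phi_\epsilon = \phi_{\epsilon,0}$, and then establish the two bounds appearing in the minimum separately. Note that $\mathrm{supp}(\phi_\epsilon) \subset B_\epsilon(0)$ and $\|\nabla\phi_\epsilon\|_\infty = \epsilon^{-1}\|\nabla\phi\|_\infty$, the latter following from the chain rule applied to $\phi(\cdot/\epsilon)$.

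For the bound $C\epsilon^{-sp}$, I would split the $y$-integration at $|x-y| = \epsilon$. On the near region $|x-y| < \epsilon$ I would invoke the Lipschitz estimate $|\phi_\epsilon(x) - \phi_\epsilon(y)| \leq \epsilon^{-1}\|\nabla\phi\|_\infty |x-y|$, which makes the integrand of order $|x-y|^{p-N-sp}$; integrating in polar coordinates gives
\[
\epsilon^{-p}\|\nabla\phi\|_\infty^p \omega_{N-1}\int_0^\epsilon r^{p(1-s)-1}\,dr = C\,\epsilon^{-p}\cdot\epsilon^{p(1-s)} = C\epsilon^{-sp},
\]
using $s < 1$ for integrability at $0$. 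On the far region $|x-y| \geq \epsilon$ I would use the crude bound $|\phi_\epsilon(x) - \phi_\epsilon(y)| \leq 2\|\phi\|_\infty$, after which $\int_\epsilon^\infty r^{-sp-1}\,dr = \epsilon^{-sp}/(sp)$ gives another contribution of order $C\epsilon^{-sp}$.

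For the bound $C\epsilon^N|x|^{-(N+sp)}$, I would separate the cases $|x| \leq 2\epsilon$ and $|x| > 2\epsilon$. When $|x| \leq 2\epsilon$ one has $\epsilon^N|x|^{-(N+sp)} \geq 2^{-(N+sp)}\epsilon^{-sp}$, so the desired bound is automatic from the first estimate. When $|x| > 2\epsilon$, the key observation is that $\phi_\epsilon(x) = 0$ because $x$ lies outside $B_\epsilon(0)$; the integral therefore reduces to $\int_{B_\epsilon(0)} |\phi_\epsilon(y)|^p|x-y|^{-(N+sp)}\,dy$. For $y\in B_\epsilon(0)$ we have $|x-y| \geq |x| - \epsilon \geq |x|/2$, so the kernel can be pulled outside, giving $\|\phi\|_\infty^p|B_\epsilon(0)|\cdot(|x|/2)^{-(N+sp)} = C\epsilon^N|x|^{-(N+sp)}$.

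There is no serious obstacle here; the only subtlety is the scaling $\|\nabla\phi_\epsilon\|_\infty = \epsilon^{-1}\|\nabla\phi\|_\infty$, which is what makes the two factors $\epsilon^{-p}$ and $\epsilon^{p(1-s)}$ combine into $\epsilon^{-sp}$ on the near region. All constants appearing in the argument depend only on $N,s,p$ and $\|\phi\|_{W^{1,\infty}}$, matching the form of $C$ claimed in the statement.
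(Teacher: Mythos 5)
Your proof is correct. Note that the paper does not actually prove this Corollary itself—it is cited from \cite{Bonder1}—but the argument you give is the standard one and is essentially identical to the method the paper uses to prove its own variable-exponent extension, Lemma \ref{lemma1}: there the authors first rescale by $\epsilon$ to pull out the power of $\epsilon$ and then split the $y$-integral at $|x'-y'|=1$ (your split at $|x-y|=\epsilon$ in original coordinates), handling the near region by the Lipschitz bound and the far region by boundedness of $\phi$, and the decay estimate by observing $\phi$ vanishes at $x'$ once $|x'|>2$ (your case $|x|>2\epsilon$). The only cosmetic difference is that you work in the original variables rather than normalizing first; both routes are sound and give the same constant structure.
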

\noindent The following lemma provides the decay estimate and the scaling property of compactly supported nonlocal gradient of smooth functions.
\begin{lemma}\label{lemma1}
	Let $1<p^-\leq p(x,y)\leq p^+<\infty$ for every $(x,y)\in \mathbb{R}^N\times\mathbb{R}^N$, $sp^+<N$, $\phi\in C_c^\infty(\mathbb{R}^N)$ such that $0\leq\phi\leq1,~\phi(0)=1$  and support of $\phi$ lies in the unit ball of $\mathbb{R}^N$. For some $x_0\in\mathbb{R}^N$ and $\epsilon>0$, define $\phi_{\epsilon,x_0}(x)=\phi(\frac{x-x_0}{\epsilon})$. Then, $$\int_{\mathbb{R}^N}\frac{|\phi_{\epsilon,x_0}(x)-\phi_{\epsilon,x_0}(y)|^{p(x,y)}}{|x-y|^{N+sp(x,y)}}dy\leq C\min\left(1/\epsilon^{sp^+}+1/\epsilon^{sp^-}, (\epsilon^N+ \epsilon^{N+s(p^--p^+)})|x-x_0|^{-(N+sp^-)}\right)$$where $C$ depends on $N,s,p,\|\phi\|_{1,\infty}$.
\end{lemma}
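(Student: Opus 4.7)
By translating, I may assume $x_0=0$; set $\phi_\epsilon:=\phi_{\epsilon,0}$, so that $\phi_\epsilon$ is supported in $B_\epsilon(0)$ and satisfies the two elementary bounds $|\phi_\epsilon(x)-\phi_\epsilon(y)|\leq 2\|\phi\|_\infty$ and $|\phi_\epsilon(x)-\phi_\epsilon(y)|\leq\|\nabla\phi\|_\infty\epsilon^{-1}|x-y|$. I will establish the two estimates in the $\min$ separately. The second estimate will only be needed in the regime $|x|\geq 2\epsilon$: for $|x|<2\epsilon$ the factor $|x|^{-(N+sp^-)}$ already exceeds $\epsilon^{-(N+sp^-)}$, so that $(\epsilon^N+\epsilon^{N+s(p^--p^+)})|x|^{-(N+sp^-)}\geq \epsilon^{-sp^-}+\epsilon^{-sp^-}\cdot\epsilon^{-s(p^+-p^-)}\geq \epsilon^{-sp^-}+\epsilon^{-sp^+}$, i.e.\ the first bound is smaller and so the $\min$ is realized by it.

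\textbf{First bound.} I would split the $y$-integration at the scale $|x-y|=\epsilon$. On the near-diagonal piece $\{|x-y|\leq\epsilon\}$ the Lipschitz estimate gives an integrand bounded by $C\epsilon^{-p(x,y)}|x-y|^{-(N-(1-s)p(x,y))}$; switching to polar coordinates and integrating $r^{(1-s)p(x,y)-1}$ from $0$ to $\epsilon$ produces the pointwise bound $C\epsilon^{-sp(x,y)}$. On the far piece $\{|x-y|>\epsilon\}$ the sup bound yields an integrand $\leq 2^{p^+}|x-y|^{-(N+sp(x,y))}$, and polar integration from $\epsilon$ to $\infty$ again gives $C\epsilon^{-sp(x,y)}$. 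Since $-sp(x,y)\in[-sp^+,-sp^-]$, for every $\epsilon>0$ we have $\epsilon^{-sp(x,y)}\leq \epsilon^{-sp^+}+\epsilon^{-sp^-}$ (the maximum of $\epsilon\mapsto \epsilon^{-s\cdot}$ over $[p^-,p^+]$ is attained at an endpoint), which proves the first estimate.

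\textbf{Second bound and main obstacle.} Assume $|x|\geq 2\epsilon$; then $\phi_\epsilon(x)=0$, the integrand collapses to $|\phi_\epsilon(y)|^{p(x,y)}/|x-y|^{N+sp(x,y)}$, and $\phi_\epsilon(y)$ is supported in $|y|\leq\epsilon$ where $|x-y|\geq |x|/2$. Using $|\phi_\epsilon|\leq 1$, the integral is bounded by $C\int_{|y|\leq\epsilon}|x|^{-(N+sp(x,y))}\,dy$. To reduce the variable exponent to $p^-$ I split into $|x|\geq 1$ and $|x|<1$. For $|x|\geq 1$, the largest value of $|x|^{-(N+sp(x,y))}$ is $|x|^{-(N+sp^-)}$, contributing $C\epsilon^N|x|^{-(N+sp^-)}$. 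For $|x|<1$, the largest value is $|x|^{-(N+sp^+)}=|x|^{-(N+sp^-)}\cdot|x|^{-s(p^+-p^-)}$, and since $|x|\geq\epsilon$ one has $|x|^{-s(p^+-p^-)}\leq \epsilon^{-s(p^+-p^-)}$, contributing $C\epsilon^{N+s(p^--p^+)}|x|^{-(N+sp^-)}$. Adding the two cases yields the second bound. The main technical point is exactly this last manoeuvre: enforcing the decay $|x|^{-(N+sp^-)}$ (rather than the formally natural $|x|^{-(N+sp^+)}$) for small $|x|$ costs the correction $\epsilon^{-s(p^+-p^-)}$, which explains the otherwise mysterious term $\epsilon^{N+s(p^--p^+)}$ in the statement and is the only place where the variable-exponent nature really interacts with the geometry.
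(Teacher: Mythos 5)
Your plan is correct and runs parallel to the paper's argument: both estimates come from splitting $y$ near/far from $x$, using the Lipschitz bound for $\phi_\epsilon$ near the diagonal and the sup bound far away, and extracting the decay from the vanishing of $\phi_\epsilon$ off $B_\epsilon(x_0)$, with the dichotomy $|x-x_0|\gtrless 1$ producing the two $\epsilon$-powers in the second member of the $\min$. The one structural difference is that the paper first rescales via $y'=(y-x_0)/\epsilon$, which strips the entire $\epsilon$-dependence off as a prefactor $\epsilon^{-sp(\cdot,\cdot)}\leq \epsilon^{-sp^+}+\epsilon^{-sp^-}$; the remaining integral then involves $\phi$ on unit scales, is $\epsilon$-free, and the variable exponent only has to be compared at the endpoints $p^\pm$ according to whether $|x'-y'|$ is below or above $1$. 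Your direct route avoids the change of variables, but then one step must be made honest: writing ``switching to polar coordinates and integrating $r^{(1-s)p(x,y)-1}$ from $0$ to $\epsilon$ produces the pointwise bound $C\epsilon^{-sp(x,y)}$'' treats $p(x,y)$ as if it were constant in $y$, and the claimed output $C\epsilon^{-sp(x,y)}$ is not a number free of the integration variable. The standard repair (which the paper's rescaling does implicitly) is to factor the near-diagonal integrand as $\epsilon^{-sp(x,y)}\bigl(|x-y|/\epsilon\bigr)^{(1-s)p(x,y)}|x-y|^{-N}$, use that $\bigl(|x-y|/\epsilon\bigr)^{(1-s)p(x,y)}\leq\bigl(|x-y|/\epsilon\bigr)^{(1-s)p^-}$ since the base lies in $[0,1]$, and $\epsilon^{-sp(x,y)}\leq\epsilon^{-sp^+}+\epsilon^{-sp^-}$, after which the polar integral is an honest constant; likewise the far piece needs the split $\{\epsilon\leq|x-y|<1\}$ (dominated by $p^+$) versus $\{|x-y|\geq 1\}$ (dominated by $p^-$). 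With these bookkeeping insertions your argument matches the lemma in full.
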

\begin{proof}
	We first observe that
	\begin{align}\label{esti1}
	\int_{\mathbb{R}^N}\frac{|\phi_{\epsilon,x_0}(x)-\phi_{\epsilon,x_0}(y)|^{p(x,y)}}{|x-y|^{N+sp(x,y)}}dy&=\int_{\mathbb{R}^N}\frac{|\phi{(\frac{x-x_0}{\epsilon})}-\phi(\frac{y-x_0}{\epsilon})|^{p(x,y)}}{|x-y|^{N+sp(x,y)}}dy\nonumber\\& \leq\left(1/\epsilon^{sp^+}+1/\epsilon^{sp^-}\right)\int_{\mathbb{R}^N}\frac{|\phi(x')-\phi(y')|^{p(x'+\epsilon x_0,y'+\epsilon x_0)}}{|x'-y'|^{N+sp(x'+\epsilon x_0,y'+\epsilon x_0)}}dy'.
	\end{align}
	Denote $\tilde{p}(x',y')=p(x'+\epsilon x_0,y'+\epsilon x_0)$ and decompose the integral on the right hand side of $\eqref{esti1}$ as follows.
	\begin{align}
	\int_{\mathbb{R}^N}\frac{|\phi(x')-\phi(y')|^{\tilde{p}(x',y')}}{|x'-y'|^{N+s\tilde{p}(x',y')}}dy'\nonumber&= \left(\int_{|x'-y'|\geq1}+\int_{|x'-y'|<1}\right)\frac{|\phi(x')-\phi(y')|^{\tilde{p}(x',y')}}{|x'-y'|^{N+s\tilde{p}(x',y')}}dy'\nonumber\\& =I+II.\nonumber
	\end{align}
	We try to find $L^\infty$ bounds of these two integrals.
	\begin{align}\label{esti2}
	I &= \int_{|x'-y'|\geq1}\frac{|\phi(x')-\phi(y')|^{\tilde{p}(x',y')}}{|x'-y'|^{N+s\tilde{p}(x',y')}}dy'\nonumber\\& \leq C\|\phi\|_{\infty}^{p^-}\int_{|x'-y'|\geq1}\frac{1}{|x'-y'|^{N+sp^-}}dy'\nonumber\\&\leq C\frac{\|\phi\|_\infty^{p^-}}{sp^-}
	\end{align}
	and 
	\begin{align}\label{esti3}
	II&= \int_{|x'-y'|<1}\frac{|\phi(x')-\phi(y')|^{\tilde{p}(x',y')}}{|x'-y'|^{N+s\tilde{p}(x',y')}}dy'\nonumber
	\\&\leq \|\nabla\phi\|_{\infty}^{p^\pm} \int_{|x'-y'|<1}\frac{1}{|x'-y'|^{N+s\tilde{p}(x',y')-\tilde{p}(x',y')}}dy'\nonumber\\& \leq \|\nabla\phi\|_{\infty}^{p^\pm} \int_{|x'-y'|<1}\frac{1}{|x'-y'|^{N-p^-(1-s)}}dy'\nonumber\\& \leq C\frac{\|\nabla\phi\|_\infty^{p^\pm}}{p^-(1-s)}.
	\end{align}
	In order to obtain a decay estimate, we restrict ourselves to the case where $|x'|>2$ such that $\phi(x')=0$. Hence, we observe that $|x'-y'|\geq|x'|-1\geq\frac{|x'|}{2}$ and 
	\begin{align}\label{esti4}
	\int_{\mathbb{R}^N}\frac{|\phi(x')-\phi(y')|^{\tilde{p}(x',y')}}{|x'-y'|^{N+s\tilde{p}(x',y')}}dy'& = \int_{\mathbb{R}^N}\frac{|\phi(y')|^{\tilde{p}(x',y')}}{|x'-y'|^{N+s\tilde{p}(x',y')}}dy'\nonumber\\&\leq \int_{|y'|\leq 1}\frac{|\phi(y')|^{\tilde{p}(x',y')}2^{N+s\tilde{p}(x',y')}}{|x'|^{N+s\tilde{p}(x',y')}}dy'\nonumber\\& \leq\frac{C\|\phi\|_\infty^{p^-}}{|x'|^{N+sp^-}}.
	\end{align}
	Combining $\eqref{esti1},\eqref{esti2},\eqref{esti3}$  and $\eqref{esti4}$ we get
	\begin{align}
	\int_{\mathbb{R}^N}\frac{|\phi_{\epsilon,x_0}(x)-\phi_{\epsilon,x_0}(y)|^{p(x,y)}}{|x-y|^{N+sp(x,y)}}dy&\leq C(1/\epsilon^{sp^+}+1/\epsilon^{sp^-})\min\left(1,\Big|\frac{x-x_0}{\epsilon}\Big|^{-(N+sp^-)}\right)\nonumber\\&\leq C\min\left(1/\epsilon^{sp^+}+1/\epsilon^{sp^-}, (\epsilon^N+ \epsilon^{N+s(p^--p^+)})|x-x_0|^{-(N+sp^-)}\right)\nonumber
	\end{align}
	where $C > 0$ depends on $N, s, p$ and $\|\phi\|_\infty$.
\end{proof}
\begin{lemma}[\cite{Lion}]\label{lion}
	Let $\mu$ and $\nu$ are two positive bounded measures on $\mathbb{R}^N$ satisfying $$\left(\int_{\mathbb{R}^{N}}|\phi|^h d\nu\right)^{\frac{1}{h}}\leq C\left(\int_{\mathbb{R}^{N}}|\phi|^t d\mu\right)^{\frac{1}{t}},~~\forall\phi\in C_c^\infty(\mathbb{R}^N),$$ for $1\leq t<h\leq\infty$ and for some $C>0$. Then, there exist a countable set $I$, a collection of distinct points $\{x_i:i\in I\}\subset \mathbb{R}^N$ and $\{\nu_i:i\in I\}\subset(0,\infty)$ such that
	$$\nu=\sum_{i\in I}\nu_i\delta_{x_i},~~\mu\geq C^{-t}\sum_{i\in I}\nu_i^{\frac{t}{h}}\delta_{x_i}.$$
\end{lemma}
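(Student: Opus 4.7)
The plan is to establish the conclusion in three stages: first upgrade the integral reverse Hölder inequality from smooth test functions to arbitrary Borel sets; then use the resulting super-linear bound, together with $h/t>1$, to force $\nu$ to be purely atomic and concentrated on the atoms of $\mu$; and finally evaluate the set-version of the inequality on singletons to recover the lower bound on $\mu$.

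For the first step, take any Borel set $E\subset\mathbb{R}^N$ (both measures are bounded, so $\mu(E),\nu(E)<\infty$). Using outer regularity of the Radon measures $\mu$ and $\nu$, together with a smooth Urysohn-type approximation, produce a sequence $\phi_n\in C_c^\infty(\mathbb{R}^N)$ with $0\le\phi_n\le 1$ and $\phi_n\to\chi_E$ both $\mu$- and $\nu$-almost everywhere. Inserting $\phi=\phi_n$ into the hypothesis and passing to the limit by dominated convergence yields
$$\nu(E)^{1/h}\le C\,\mu(E)^{1/t},\qquad\text{equivalently}\qquad \nu(E)\le C^{h}\mu(E)^{h/t}.$$
(For the edge cases $t=1$ or $h=\infty$ one replaces dominated convergence by monotone convergence, but the conclusion is unchanged.)

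For the second step, let $\{x_i\}_{i\in I}$ denote the at most countable collection of atoms of $\mu$, with masses $\mu_i=\mu(\{x_i\})>0$, and set $E_0=\mathbb{R}^N\setminus\{x_i:i\in I\}$, so that $\mu\lfloor E_0$ is non-atomic. For any Borel $F\subset E_0$ and any $\varepsilon>0$, non-atomicity allows one to split $F$ into finitely many disjoint Borel pieces $F_1,\dots,F_M$ with $\mu(F_j)<\varepsilon$. Since $h/t>1$,
$$\nu(F)=\sum_{j=1}^{M}\nu(F_j)\le C^{h}\sum_{j=1}^{M}\mu(F_j)^{h/t}\le C^{h}\varepsilon^{(h/t)-1}\mu(F)\longrightarrow 0\;\;\text{as}\;\varepsilon\to 0^+.$$
Hence $\nu(E_0)=0$, so $\nu$ is supported on $\{x_i\}_{i\in I}$, i.e.\ $\nu=\sum_{i\in I}\nu_i\delta_{x_i}$ with $\nu_i=\nu(\{x_i\})$; discarding indices with $\nu_i=0$ we may assume $\nu_i>0$ for every $i\in I$.

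For the third step, apply the set-version of the inequality to the singleton $E=\{x_i\}$: this gives $\nu_i\le C^{h}\mu_i^{h/t}$, equivalently $\mu_i\ge C^{-t}\nu_i^{t/h}$. Combining this with $\mu\ge\sum_{i\in I}\mu_i\delta_{x_i}$ (trivially true for the atomic part of $\mu$) yields
$$\mu\;\ge\;\sum_{i\in I}\mu_i\delta_{x_i}\;\ge\;C^{-t}\sum_{i\in I}\nu_i^{t/h}\delta_{x_i},$$
which is the desired conclusion. The main obstacle is the first step: carefully justifying the approximation of $\chi_E$ by smooth test functions in both $L^{h}(\nu)$ and $L^{t}(\mu)$ simultaneously, and handling the endpoint exponents $t=1$, $h=\infty$. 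Once the set-inequality $\nu(E)\le C^h\mu(E)^{h/t}$ is in hand, atomicity and the concluding estimate are immediate from the inequality $h/t>1$.
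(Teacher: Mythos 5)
The paper does not supply a proof of Lemma \ref{lion}; it is stated as a known result and attributed to Lions \cite{Lion}, so there is no ``paper's own proof'' to compare against. Your argument is correct and is essentially the classical proof one finds in Lions' original article and in standard expositions: (i) use inner/outer regularity of the two (finite, hence Radon) Borel measures to pass from the smooth test-function inequality to the set inequality $\nu(E)\le C^{h}\mu(E)^{h/t}$; (ii) exploit $h/t>1$ together with non-atomicity of the diffuse part of $\mu$ to force $\nu$ to vanish off the atoms of $\mu$, hence $\nu=\sum_i\nu_i\delta_{x_i}$; (iii) evaluate the set inequality on singletons to get $\mu_i\ge C^{-t}\nu_i^{t/h}$ and sum. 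The three steps are logically complete, and the only delicate point --- the simultaneous approximation of $\chi_E$ in $L^{h}(\nu)$ and $L^{t}(\mu)$, most cleanly done by sandwiching $K\subset E\subset U$ with $\phi\in C_c^\infty$, $\chi_K\le\phi\le\chi_U$, and using $\nu(K)^{1/h}\le C\mu(U)^{1/t}$ before shrinking $U$ and enlarging $K$ --- is correctly flagged by you. The one thing you gloss over slightly is the endpoint $h=\infty$: there the set-version of the inequality degenerates (it says $\mu(E)\ge C^{-t}$ whenever $\nu(E)>0$), so step (ii) needs a separate, even easier, remark rather than the ``$\varepsilon^{h/t-1}\to 0$'' computation; since the paper only invokes the lemma with finite exponents, this is harmless, but worth noting if you want the statement at full stated generality.
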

\begin{lemma}[Br\'{e}zis-Lieb lemma, \cite{Brezis}]\label{brezis}
	Let $u_n\rightarrow u$ a.e. and $u_n\rightarrow u$ weakly in $L^p(\Omega)$  for all $n$ where $\Omega\subset\mathbb{R}^N$ and $0<p<\infty$. Then,  
	$$\lim_{n\rightarrow \infty}\left(\int_{\Omega}|u_n|^p-\int_{\Omega}|u_n-u|^p\right)=\int_{\Omega}|u|^p.$$	
\end{lemma}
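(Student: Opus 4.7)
The plan is to reduce the identity to a dominated-convergence argument driven by a single pointwise inequality of the form
\begin{equation*}
\bigl|\,|a+b|^p-|a|^p\,\bigr|\le \epsilon\,|a|^p+C_\epsilon\,|b|^p,\qquad a,b\in\mathbb{R},
\end{equation*}
valid for every $\epsilon>0$ with some constant $C_\epsilon>0$ depending only on $p$ and $\epsilon$. First I would establish this inequality by an elementary case analysis using the Mean Value Theorem when $p\ge 1$, and using the sub-additivity bound $|a+b|^p\le |a|^p+|b|^p$ when $0<p<1$; the key mechanism is that the difference $|a+b|^p-|a|^p$ can be controlled by $|b|\bigl(|a|+|b|\bigr)^{p-1}$ (or by $|b|^p$ directly in the sub-linear case), and then Young's inequality splits the resulting mixed term into a small multiple of $|a|^p$ plus a large multiple of $|b|^p$.

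Next I would apply the pointwise inequality with $a=u_n-u$ and $b=u$ to get
\begin{equation*}
\Bigl|\,|u_n|^p-|u_n-u|^p\,\Bigr|\le \epsilon\,|u_n-u|^p+C_\epsilon\,|u|^p,
\end{equation*}
and therefore
\begin{equation*}
W_n^\epsilon:=\Bigl(\bigl|\,|u_n|^p-|u_n-u|^p-|u|^p\,\bigr|-\epsilon\,|u_n-u|^p\Bigr)_{\!+}\ \le\ (C_\epsilon+1)\,|u|^p.
\end{equation*}
Since $u_n\to u$ a.e., we have $W_n^\epsilon\to 0$ a.e., and since the weak convergence in $L^p(\Omega)$ forces $u\in L^p(\Omega)$, the majorant $(C_\epsilon+1)|u|^p$ is integrable. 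The Dominated Convergence Theorem then yields $\int_\Omega W_n^\epsilon\,dx\to 0$ as $n\to\infty$.

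Finally, I would use that $\{u_n\}$ is bounded in $L^p(\Omega)$ (a standard consequence of weak convergence), hence so is $\{u_n-u\}$; let $M=\sup_n\int_\Omega|u_n-u|^p\,dx<\infty$. From the definition of $W_n^\epsilon$,
\begin{equation*}
\left|\int_\Omega \bigl(|u_n|^p-|u_n-u|^p-|u|^p\bigr)\,dx\right|\le \int_\Omega W_n^\epsilon\,dx+\epsilon\,M,
\end{equation*}
so taking $\limsup_{n\to\infty}$ gives an upper bound of $\epsilon M$, and letting $\epsilon\to 0^+$ concludes the proof. The main obstacle is the first step: producing the pointwise inequality in a way that covers both regimes $p\ge 1$ and $0<p<1$ uniformly; everything afterwards is a clean dominated-convergence plus $\epsilon$-argument, and the weak convergence hypothesis is used only to secure $u\in L^p(\Omega)$ and the uniform $L^p$-bound on $\{u_n-u\}$.
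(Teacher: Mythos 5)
The paper does not prove this lemma at all---it is quoted as a classical result from Br\'ezis--Lieb \cite{Brezis}---and your argument is correct: it is precisely the standard proof from that reference, via the pointwise inequality $\bigl||a+b|^p-|a|^p\bigr|\le\epsilon|a|^p+C_\epsilon|b|^p$, dominated convergence applied to the truncated quantity $W_n^\epsilon$, and the final $\epsilon M$ estimate. One cosmetic caveat: for $0<p<1$ the phrase ``weak convergence in $L^p$'' in the statement is not meaningful (the space is not normed), so in that range one should simply assume $\sup_n\int_\Omega|u_n|^p\,dx<\infty$ and obtain $u\in L^p(\Omega)$ from Fatou's lemma, which is all your argument actually uses.
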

\begin{lemma}\label{bah}
	Consider the mapping $I_1:W_0\rightarrow W_0^*$ defined as
	$$\langle I_1(u),v\rangle=\int_{\mathbb{R}^{2N}}\frac{|u(x)-u(y)|^{p(x,y)-2}(u(x)-u(y))(v(x)-v(y))}{|x-y|^{N+sp(x,y)}}dxdy,$$ for every $u,v\in W_0$. Then, the following properties hold for $I_1$.
	\begin{enumerate}
		\item $I_1$ is a bounded and strictly monotone operator.
		\item $I_1$ is a mapping of $(S_+)$ type, i.e. if $u_n\rightarrow u$ weakly in $W_0$ and $\lim_{n\rightarrow \infty}\sup\langle I_1(u_n)-I_1(u),u_n-u\rangle\leq0$, then $u_n\rightarrow u$ strongly in $W_0$.
		\item $I_1:W_0\rightarrow W_0^*$ is a homeomorphism.
	\end{enumerate}
\end{lemma}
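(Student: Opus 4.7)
\medskip
\noindent\textbf{Proof proposal for Lemma \ref{bah}.} The plan is to establish the three properties sequentially, each feeding the next.

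For part (1), boundedness will follow from applying H\"{o}lder's inequality on the product measure $d\mu=|x-y|^{-N}\,dxdy$ with conjugate variable exponents $\tfrac{p(x,y)}{p(x,y)-1}$ and $p(x,y)$, after splitting the integrand of $\langle I_1(u),v\rangle$ as
\[
\Bigl(\tfrac{|u(x)-u(y)|}{|x-y|^{\frac{N+sp(x,y)}{p(x,y)}}}\Bigr)^{p(x,y)-1}\cdot\tfrac{|v(x)-v(y)|}{|x-y|^{\frac{N+sp(x,y)}{p(x,y)}}},
\]
and then translating modular bounds into norm bounds via Lemma \ref{rho} to get $|\langle I_1(u),v\rangle|\le C\,\|u\|_{W_0}^{p^{\pm}-1}\,\|v\|_{W_0}$. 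Strict monotonicity will come from the pointwise vector inequality
\[
\bigl(|a|^{p-2}a-|b|^{p-2}b\bigr)\cdot(a-b)\ge \begin{cases} c_p|a-b|^p,& p\ge 2,\\ c_p\dfrac{|a-b|^2}{(|a|+|b|)^{2-p}},& 1<p<2,\end{cases}
\]
applied with $p=p(x,y)$ to $a=u(x)-u(y)$, $b=v(x)-v(y)$, and integrated against $|x-y|^{-N-sp(x,y)}\,dxdy$: the right-hand side is strictly positive whenever $u\not\equiv v$ in $W_0$.

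For part (2), the $(S_+)$ property, assume $u_n\rightharpoonup u$ weakly in $W_0$ and $\limsup_n\langle I_1(u_n)-I_1(u),u_n-u\rangle\le 0$. The monotonicity from (1) makes the bracket nonnegative, so actually the limit equals $0$. I would partition $\mathbb{R}^{2N}=\{p(x,y)\ge 2\}\cup\{1<p(x,y)<2\}$ and invoke Simon's inequality on each piece. On the first region the integrand of $\rho_{W_0}(u_n-u)$ is directly dominated by the integrand of the bracket. On the second region the subquadratic Simon inequality yields the integrand to the $\tfrac{p(x,y)}{2}$ power multiplied by $(|a_n|^{p(x,y)}+|b|^{p(x,y)})^{\frac{2-p(x,y)}{2}}$; a H\"{o}lder step with conjugate exponents $\tfrac{2}{p(x,y)}$ and $\tfrac{2}{2-p(x,y)}$, exploiting the uniform bound on $\rho_{W_0}(u_n)$ and $\rho_{W_0}(u)$, converts this back into control of the full modular. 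Thus $\rho_{W_0}(u_n-u)\to 0$, and Lemma \ref{rho}(5) upgrades this to $\|u_n-u\|_{W_0}\to 0$.

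For part (3), I would assemble a Browder--Minty-type argument. Continuity of $I_1$ on $W_0$ comes from dominated convergence together with the growth estimate from (1). Coercivity is immediate since $\langle I_1(u),u\rangle=\rho_{W_0}(u)\ge \|u\|_{W_0}^{p^-}$ once $\|u\|_{W_0}>1$ by Lemma \ref{rho}(3). Reflexivity of $W_0$ (Proposition \ref{reflexive}), together with strict monotonicity, continuity and coercivity, gives that $I_1:W_0\to W_0^*$ is a bijection. To promote this to a homeomorphism: if $I_1(u_n)\to I_1(u)$ in $W_0^*$, coercivity forces $\{u_n\}$ bounded, so a subsequence $u_{n_k}\rightharpoonup w$ weakly; then $\langle I_1(u_{n_k})-I_1(w),u_{n_k}-w\rangle\to 0$ and part (2) delivers $u_{n_k}\to w$ strongly. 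Continuity of $I_1$ and injectivity identify $w=u$, and a standard subsequence argument then yields $u_n\to u$ in $W_0$.

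\medskip
\noindent\emph{Main obstacle.} The delicate step is part (2): the subquadratic branch of Simon's inequality produces a factor with variable exponent $\tfrac{2-p(x,y)}{2}$, and the ensuing H\"{o}lder decomposition must be carried out with the genuinely variable exponents $\tfrac{p(x,y)}{2}$ and $\tfrac{p(x,y)}{2-p(x,y)}$. Tracking the constants uniformly in terms of $p^-$ and $p^+$ so that the final bound really is $o(1)$ as $n\to\infty$ (and not merely finite) is where the variable-exponent machinery bites, and it requires careful use of the modular/norm equivalence of Lemma \ref{rho} on both factors.
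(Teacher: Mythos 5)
Your proposal is correct and follows the standard route for operators of this type (H\"{o}lder on the split integrand for boundedness; Simon's vector inequalities, separated over $\{p(x,y)\ge 2\}$ and $\{1<p(x,y)<2\}$, for strict monotonicity and $(S_+)$; Browder--Minty together with $(S_+)$ for the homeomorphism). Note, however, that the paper does not carry out any of these steps: immediately after Lemma \ref{bah} it inserts a remark stating only that the lemma generalizes Lemma 4.2 of \cite{Bahrouni2} and that ``the proof here follows exactly the same arguments even in this case,'' and stops there. You have therefore reconstructed an argument the authors chose to delegate to the cited reference, and your reconstruction agrees with the scheme used there. The technical point you single out at the end is indeed the one place the variable-exponent setting bites: in the subquadratic region the H\"{o}lder step runs with the genuinely variable conjugate pair $\tfrac{2}{p(\cdot,\cdot)}$ and $\tfrac{2}{2-p(\cdot,\cdot)}$, and the vanishing of $\langle I_1(u_n)-I_1(u),\,u_n-u\rangle$ must be converted, first to modular smallness of the bracket-integrand raised to the power $p/2$ in $L^{2/p(\cdot,\cdot)}$ and then to norm smallness, via the same modular--norm inequalities one has on $W_0$ (Lemma \ref{rho} and its Lebesgue analogue); with that done, combining the two regions gives $\rho_{W_0}(u_n-u)\to 0$ and Lemma \ref{rho}(5) finishes. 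The proposal is sound.
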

\begin{remark}
	The Lemma $\ref{bah}$ is a generalization of the Lemma 4.2 in \cite{Bahrouni2} where the authors worked with the space $\{u\in W^{s,p(\cdot,\cdot)}(\Omega):u=0$ in $\partial\Omega\}$. The proof here follows exactly the same arguments even in this case.
\end{remark}
\section*{Conclusion}
We have developed a concentration compactness type principle (CCTP) in a variable exponent setup. The symmetric mountain pass lemma and the CCTP have been applied to a problem involving fractional $(p(x),p^+)$-Laplacian to guarantee the existence of infinitely many nontrivial solutions.
\section*{Data availability statement}
The manuscript uses no data which needs to be shared or reproduced.
\section*{Acknowledgement}
The author Akasmika Panda thanks the financial assistantship received from the Ministry of Human
Resource Development (M.H.R.D.), Govt. of India. Both the authors also acknowledge the facilities received from the Department of mathematics, National Institute of Technology Rourkela. The authors thank Anouar Bahrouni for the useful discussions.
 
\end{document}